\def\ol#1{\overline{#1}}
\def\wt#1{\widetilde{#1}}
\def\ul#1{\underline{#1}}
\theoremstyle{plain}
    \newtheorem{theorem}{Theorem}[section]
    \newtheorem{proposition}[theorem]{Proposition}
    \newtheorem{lemma}[theorem]{Lemma}
    \newtheorem{corollary}[theorem]{Corollary}
\theoremstyle{definition}
    \newtheorem{definition}[theorem]{Definition}
    \newtheorem{example}[theorem]{Example}
    \newtheorem{remark}[theorem]{Remark}
\def\Alphabet{A,B,C,D,E,F,G,H,I,J,K,L,M,N,O,P,Q,R,S,T,U,V,W,X,Y,Z}
\def\alphabet{a,b,c,d,e,f,g,h,i,j,k,l,m,n,o,p,q,r,s,t,u,v,w,x,y,z}
\def\endpiece{xxx}
\def\makeAlphabet[#1]{\expandafter\makeA#1,xxx,}
\def\makealphabet[#1]{\expandafter\makea#1,xxx,}
\def\makeA#1,{\def\temp{#1}\ifx\temp\endpiece\else%
\mkbb{#1}\mkfrak{#1}\mkbf{#1}\mkcal{#1}\mkscr{#1}\mkbs{#1}\expandafter\makeA\fi}%
\def\makea#1,{\def\temp{#1}\ifx\temp\endpiece\else\mkfrak{#1}\mkbf{#1}\mkbs{#1}\expandafter\makea\fi}%
\def\mkbb#1{\expandafter\def\csname bb#1\endcsname{\mathbb{#1}}}
\def\mkfrak#1{\expandafter\def\csname fr#1\endcsname{\mathfrak{#1}}}
\def\mkbf#1{\expandafter\def\csname b#1\endcsname{\mathbf{#1}}}
\def\mkcal#1{\expandafter\def\csname c#1\endcsname{\mathcal{#1}}}
\def\mkscr#1{\expandafter\def\csname s#1\endcsname{\mathscr{#1}}}
\def\mkbs#1{\expandafter\def\csname bs#1\endcsname{{\boldsymbol{#1}}}}
\def\makeop[#1]{\xmakeop#1,xxx,}
\def\mkop#1{\expandafter\def\csname #1\endcsname{{\mathrm{#1}}}} %
\def\xmakeop#1,{\def\temp{#1}\ifx\temp\endpiece\else\mkop{#1}\expandafter\xmakeop\fi}%
\def\makeup[#1]{\xmakeup#1,xxx,}
\def\mkup#1{\expandafter\def\csname #1\endcsname{{\mathrm{#1}\,}}} %
\def\xmakeup#1,{\def\temp{#1}\ifx\temp\endpiece\else\mkup{#1}\expandafter\xmakeup\fi}%
\def\cLog{\cL\!\operatorname{og}}
\def\sLog{\sL\!\!\operatorname{og}}
\def\bbLog{\bbL\!\operatorname{og}}
\def\bsmu{I}
\def\abs#1{|#1|}
\def\pol{\boldsymbol{\operatorname{pol}}}
\def\bone{\mathbbm{1}}
\def\bsxi{{\boldsymbol{\xi}}}
\numberwithin{equation}{section}
\begin{document}
\title{The Hodge realization of the polylogarithm on the product of multiplicative groups}
\author[Bannai]{Kenichi Bannai$^{*\diamond}$}
\email{bannai@math.keio.ac.jp}
\address{${}^*$Department of Mathematics, Faculty of Science and Technology, Keio University, 3-14-1 Hiyoshi, Kouhoku-ku, Yokohama 223-8522, Japan}
\address{${}^\diamond$Mathematical Science Team, RIKEN Center for Advanced Intelligence Project (AIP),1-4-1 Nihonbashi, Chuo-ku, Tokyo 103-0027, Japan}
\author[Hagihara]{Kei Hagihara$^{\diamond*}$}
\author[Yamada]{Kazuki Yamada$^*$}
\author[Yamamoto]{Shuji Yamamoto$^{*\diamond}$}

\date{\today}
\date{July 22, 2019\quad(Version 2.05)}
\begin{abstract}
	The purpose of this article is to describe explicitly the polylogarithm class in absolute 
	Hodge cohomology of a product of multiplicative groups,
	in terms of the Bloch-Wigner-Ramakrishnan polylogarithm functions.
	We will use the logarithmic Dolbeault complex defined by Burgos to calculate the corresponding absolute Hodge cohomology groups.
\end{abstract}
\thanks{This research was conducted as part of the KiPAS program FY2014--2018 of the Faculty of Science and Technology at Keio University.
This research was supported in part by KAKENHI 26247004, 16J01911, 16K13742, 18H05233 as well as the 
JSPS Core-to-Core program ``Foundation of a Global Research Cooperative
Center in Mathematics focused on Number Theory and Geometry''. }
\subjclass[2010]{14C30, 11G55} 
\maketitle
\setcounter{tocdepth}{1}
\setcounter{section}{0}

%
%
%
%
%
\section{Introduction}
%
%
%
%
%

The classical polylogarithm was constructed by Beilinson and Deligne as a class in the motivic cohomology of $\bbP^1\setminus\{0,1,\infty\}$.
The Hodge realization of this class gives an element in the corresponding absolute Hodge cohomology and may be described in terms of a
unipotent variation of mixed $\bbR$-Hodge structures on $\bbP^1\setminus\{0,1,\infty\}$ whose periods are given by
the classical polylogarithm functions (see \cite{HW98}).  
Since $\bbP^1 \setminus \{0,1,\infty\} = \bbG_{\mathrm{m}} \setminus \{1\}$,
we may view the classical polylogarithm as an object associated to the multiplicative group $\bbG_{\mathrm{m}}$. 
In fact, the construction of the polylogarithm was extended  to elliptic curves by Beilinson-Levin \cite{BL94},
to abelian schemes by Wildeshaus \cite{Wil97} and Kings \cite{Kin09}, and
more recently to general commutative group schemes by Huber and Kings \cite{HK18}.
The explicit description of the Hodge realization of the polylogarithm  was given
for elliptic curves by Beilinson-Levin \cite{BL94} (see also \cite{BKT10}*{Appendix}),
and the description of the topological sheaf underlying the Hodge realization of the polylogarithm
for general abelian varieties was given by Levin \cite{Lev97} and Blotti\`ere \cite{Blo09}.
The purpose of this article is to describe directly the Hodge realization of the polylogarithm 
as an explicit class in absolute Hodge cohomology for the product of multiplicative groups.

For an integer $g>0$, let $X = \bbG_{\mathrm{m}}^g$ be the $g$-fold product of the multiplicative group defined over 
the field of complex numbers $\bbC$.
We denote by $\bbLog$ the logarithm sheaf (see Definition \ref{def: log}),
which is a certain admissible pro-unipotent variation of mixed $\bbR$-Hodge structures on $X$.
The Hodge realizaton of the polylogarithm is defined as a class
\[
	\pol \in H^{2g-1}_\sA(U, \bbLog(g))
\]
in the absolute Hodge cohomology $H^{2g-1}_\sA(U, \bbLog(g))$ of $U:= X \setminus \{1\}$ with coefficients in $\bbLog(g)$,
the $g$-fold Tate twist of $\bbLog$.
The polylogarithm class is characterized as the class which maps to $1$
through the residue map.
In this article, we describe the polylogarithm class in terms of the Bloch-Wigner-Ramakrishnan polylogarithm functions
defined by Ramakrishnan \cite{Ram86} (see also \cite{Zag90}*{\S1}),
which are single-valued real analytic variants of the classical polylogarithm functions.

Contrary to the case when $g=1$, the classes in absolute Hodge cohomology of degree $>1$ cannot be interpreted in terms of
variations of mixed $\bbR$-Hodge structures on $U$.  We will use the logarithmic Dolbeault complex defined by Burgos \cite{Bur94}
to construct an explicit complex yielding absolute Hodge cohomology.
Then in our main result, Theorem \ref{main theorem},
we will describe the polylogarithm class in terms of cocycles.  
We admit that our result is not very surprising, but our method illustrates the strength of the logarithmic Dolbeault complex in 
elegantly describing directly the polylogarithm class in absolute Hodge cohomology for higher dimensional cases.

Our method also applies to the case $g=1$ and then recovers the following well-known result.

\begin{corollary}[=Corollary \ref{corollary: main}]
	Let $d>1$ be an integer and let $\zeta$ be a primitive $d$-th root of unity.  The inclusion $i_\zeta: \Spec \bbC \rightarrow U\coloneqq\Spec\bbC[t,t^{-1},(t-1)^{-1}]$
	defined by $t \mapsto \zeta$ induces the restriction map 
	\[
		i^*_\zeta: H^1_\sA(U, \bbLog(1)) \rightarrow H^1_\sA(\Spec \bbC, i^*_\zeta \bbLog(1)) \cong \prod_{k=0}^\infty \bbC/(2\pi i)^{k+1}\bbR.
	\]
	Then we have
	\[
		i^*_\zeta \pol  = \bigl( (-1)^k\Li_{k+1}(\zeta)\bigr)_{k=0}^\infty \in \prod_{k=0}^\infty \bbC/(2\pi i)^{k+1} \bbR.
	\]
\end{corollary}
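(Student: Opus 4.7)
The plan is to deduce the corollary from the main theorem (Theorem \ref{main theorem}) by explicitly specializing the cocycle representative of $\pol$ at the point $\zeta$ and then interpreting the output in terms of classical polylogarithm values modulo the lattice $(2\pi i)^{k+1}\bbR$.

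The first step is to make the identification $H^1_\sA(\Spec \bbC, i^*_\zeta \bbLog(1)) \cong \prod_{k=0}^\infty \bbC/(2\pi i)^{k+1}\bbR$ explicit. Since $\bbLog$ is the pro-unipotent logarithm sheaf on $\bbG_{\mathrm{m}}$, its pullback to any point factors through the graded quotients, whose $k$-th piece is the Tate twist $\bbR(k)$. Thus $i^*_\zeta \bbLog(1)$ is a pro-object whose successive quotients are $\bbR(k+1)$ for $k \geq 0$. Using the standard computation $H^1_\sA(\Spec \bbC, \bbR(n)) \cong \Ext^1_{\MHS_\bbR}(\bbR(0), \bbR(n)) \cong \bbC/(2\pi i)^n \bbR$, together with the (split) pro-structure on $i^*_\zeta \bbLog(1)$, gives the claimed product decomposition. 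I would fix once and for all the basis of $i^*_\zeta \bbLog(1)$ compatible with the weight filtration so that the $k$-th component of the isomorphism can be read off coordinatewise.

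The second step is to apply Theorem \ref{main theorem} to produce an explicit cocycle in the logarithmic Dolbeault complex of Burgos representing $\pol \in H^1_\sA(U, \bbLog(1))$. For $g=1$ this cocycle will be built component-wise from the Bloch-Wigner-Ramakrishnan polylogarithm functions $D_k$ of \cite{Ram86}. Pulling back via $i_\zeta$ corresponds to evaluating at $t = \zeta$; the differential forms contributing to higher-type pieces vanish at a point, so the pullback cocycle reduces to the collection of numerical values obtained from the Ramakrishnan functions at $\zeta$. The main bookkeeping task is matching the indexing and Tate twist conventions of the main theorem with the weight grading used in the first step, so that the $k$-th coordinate of $i^*_\zeta \pol$ is indeed represented by a specific single-valued polylog value at $\zeta$.

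The third and most delicate step is to identify these single-valued values, modulo $(2\pi i)^{k+1}\bbR$, with $(-1)^k \Li_{k+1}(\zeta)$. By construction, the Bloch-Wigner-Ramakrishnan function $D_{k+1}(z)$ is, up to a normalization, a real/imaginary linear combination of $\Li_{k+1}(z)$, its conjugates, and lower-order polylogs multiplied by powers of $\log|z|$ and $\arg z$. Since $\zeta$ has $|\zeta|=1$, the $\log|z|$ terms vanish and the correction terms are pure imaginary (or real) multiples of $(2\pi i)^j$ times lower polylog values at $\zeta$. Working in the quotient $\bbC/(2\pi i)^{k+1}\bbR$ absorbs these correction terms, leaving only $\Li_{k+1}(\zeta)$ up to the expected sign $(-1)^k$, which I would track from the sign conventions in the definition of $\bbLog$ and the Tate twist $\bbLog(1)$. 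The main obstacle I anticipate is precisely this sign- and normalization-reconciliation: matching the cocycle produced by the main theorem with the residue normalization $\pol \mapsto 1$ so that no stray factor of $2\pi i$ or $-1$ survives in the quotient. Once this bookkeeping is settled, the corollary follows.
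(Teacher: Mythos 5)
Your proposal is correct and follows essentially the same route as the paper: the paper's proof specializes the explicit $g=1$ cocycle $(\alpha,\eta,\xi)$ of Theorem \ref{thm: g=1} (not Theorem \ref{main theorem}, which is stated for $g>1$), observes that the degree-one components vanish on the zero-dimensional $\Spec\bbC$ so only $i^*_\zeta\alpha=\sum_m D_{m+1}(\zeta)\ul e^m(1)$ survives, and then uses $\log|\zeta|=0$ together with the basis change $i^m\ul e^m(1)=(-1)^m\ul\omega^{m+1}$, $\bbR i\ul e^m(1)=\bbR\ul u^{m+1}$ to land on $(-1)^m\Li_{m+1}(\zeta)$ modulo $(2\pi i)^{m+1}\bbR$. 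Your third step is slightly loose (after the $\log|\zeta|$ terms vanish the only remaining discrepancy is that $D_{m+1}(\zeta)=\Re(i^m\Li_{m+1}(\zeta))$ differs from $i^m\Li_{m+1}(\zeta)$ by an element of $i\bbR$, which the quotient kills; no lower polylogarithm corrections appear), but the bookkeeping you flag is exactly what the paper carries out.
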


Here, $\Li_k(t)$ denotes the classical polylogarithm function, defined as the complex analytic continuation of the 
series
\[ 
	\Li_k(t) := \sum_{n=1}^\infty \frac{t^n}{n^k}
\]
on the open unit disc in the complex plane.

The arithmetic significance of our result for the case $g>1$ will be treated in subsequent research.

\tableofcontents

The content of this article is as follows.  In \S \ref{section: MHS}, we will review the theory of mixed $\bbR$-Hodge structures
and the construction of mixed $\bbR$-Hodge structure on cohomology with coefficients in an admissible unipotent variation of mixed
$\bbR$-Hodge structures.   In \S \ref{section: polylogarithm}, we will define the logarithm sheaf $\bbLog$
on $X = \bbG_{\mathrm{m}}^g$, and describe the geometric
polylogarithm class  in terms of differential forms.
In \S \ref{section: log Dolbeault}, we will introduce the logarithmic Dolbeault complex defined by Burgos \cite{Bur94}, which will be used
to describe the absolute Hodge cohomology of $U= X \setminus \{1\}$.  In \S \ref{section: main theorem}, we will define 
the absolute polylogarithm class and prove our main result.

\subsection*{Acknowledgement} 
The authors express their sincere gratitude to the referee for carefully reading  the manuscript and
giving helpful suggestions.
The authors would also like to thank the KiPAS program FY2014--2018 of the Faculty of Science and Technology at 
Keio University for providing an excellent environment making this research possible.

%
%
%
%
%
\section{Mixed Hodge Structure on Cohomology}\label{section: MHS}
%
%
%
%
%

In what follows, for a smooth scheme $X$ of finite type over $\bbC$, we denote by $\cO_X$ the sheaf of holomorphic functions on $X(\bbC)$, and by $\Omega^\bullet_{X}$  
the complex of sheaves of holomorphic differential forms.
In this section, we review the construction by Deligne \cite{Del71} and Hain-Zucker \cite{HZ87} of mixed $\bbR$-Hodge structures on cohomology 
of a complex variety with coefficients in an admissible unipotent variation of mixed $\bbR$-Hodge structures.

%
%
\subsection{Construction of Mixed Hodge Structures on Cohomology}
%
%

We first start with the definition of pure and mixed $\bbR$-Hodge structures.

\begin{definition}[pure $\bbR$-Hodge structure]\label{def: pure}	
	Let $V = (V, F^\bullet)$ be a pair consisting of a finite dimensional $\bbR$-vector space and a 
	finite descending Hodge filtration by $\bbC$-linear subspaces $F^\bullet$ on $V_\bbC := V \otimes \bbC$.
	We say that $V$ is a \textit{pure $\bbR$-Hodge structure of weight $n$}, if
	\begin{equation*}
		V_\bbC=\bigoplus_{p+q=n}F^pV_\bbC\cap\ol F^qV_\bbC,
	\end{equation*}
	where $\ol F^qV_\bbC := \ol{F^qV}_\bbC$ is the complex conjugate of $F^q V_\bbC$.
\end{definition}

\begin{example}\label{example: Tate}
	For any integer $n\in \bbZ$, the Tate object $\bbR(n) := (V, F^\bullet)$, where $V_\bbC : = \bbC \ul\omega^n$ for a basis $\ul\omega^n$ 
	and $V := \bbR \ul u^n \subset V_\bbC$ for $\ul u^n := (2 \pi i)^n \ul\omega^n$, with a filtration $F^\bullet$ satisfying $F^{-n} V_\bbC = V_\bbC$ and 
	$F^{-n+1} V_\bbC = 0$ is a pure $\bbR$-Hodge structure of weight $-2n$.
\end{example}

\begin{definition}[mixed $\bbR$-Hodge structure]\label{def: MHS}
    Let $V=(V,W_\bullet,F^\bullet)$ be a triple consisting of a finite dimensional $\bbR$-vector space $V$, 
    a finite ascending filtration $W_\bullet$ on $V$ by $\bbR$-linear subspaces, and a finite descending 
    filtration $F^\bullet$ on $V_\bbC$ by $\bbC$-linear subspaces.
    We say that $V$ is a \textit{mixed $\bbR$-Hodge structure}, if  the pair
    $\Gr^W_n V := (\Gr^W_n V, F^\bullet)$ for each $n\in\bbZ$ is a pure $\bbR$-Hodge structure of weight $n$, where 
    $\Gr^W_nV:=W_nV/W_{n-1}V$ and $F^\bullet$ is the filtration on $\Gr^W_nV$ induced by $F^\bullet$.
    
    We denote by $\MHS_\bbR$ the category of mixed $\bbR$-Hodge structures, whose morphisms are $\bbR$-linear
    homomorphisms of the underlying $\bbR$-vector spaces compatible with the filtrations $W_\bullet$ and $F^\bullet$.
\end{definition}

Let $X$ be a smooth algebraic variety of dimension $g$ over $\bbC$, which we view as a complex manifold.  The purpose of this section is to give the 
construction of a mixed $\bbR$-Hodge structure on the cohomology $H^m(X,\bbR):= H^m(X(\bbC),\bbR)$.  
We first start with the construction of the 
weight filtration on $H^m(X,\bbR)$.
Let $j: X \hookrightarrow \ol X$ be a smooth and proper compactification of $X$ 
such that the complement $D: = \ol X \setminus X$ is a simple normal crossing divisor.  
Note that we have $H^m(X,\bbR) = \bbH^m(\ol X, R j_* \bbR)$.
For any complex $K^\bullet$ in an abelian category, let $\tau_{\leq\bullet}$ be the canonical filtration which is defined by
\[
	\tau_{\leq n}(K^m) =
	\begin{cases}
		0  &  n<m,\\
		\Ker(d^n) &  n=m,\\
		K^m  &  n>m.
	\end{cases}
\]
We define the filtration $\wt W_\bullet$ on $H^m(X,\bbR)$ to be the filtration induced from the canonical filtration $\tau_{\leq\bullet}$ on $R j_* \bbR$.
In other words, we let 
\[
	\wt W_n H^m(X,\bbR) := \Im\bigl(\bbH^m(\ol X, \tau_{\leq n}R j_* \bbR)\rightarrow H^m(X,\bbR)\bigr).
\]
We define the filtration $W_\bullet:= \wt W_\bullet[m]$ on $H^m(X,\bbR)$ by $W_n H^m(X,\bbR) := \wt W_{n-m} H^m(X,\bbR)$.

In order to define the Hodge filtration $F^\bullet$ on cohomology, we introduce the de Rham cohomology.
For the complex of differentials $\Omega^\bullet_X$ on $X$, we define the de Rham cohomology of $X$ by 
\[
	H^m_\dR(X):= \bbH^m(X, \Omega^\bullet_X).
\]
Note that we have canonical isomorphisms $H^m(X,\bbR)\otimes\bbC = H^m(X,\bbC) \cong H^m_\dR(X)$.
As in  \cite{Del71}*{(3.1.2)}, we let $\Omega^\bullet_{\ol X}(D)$ be the complex of sheaves of logarithmic differentials around $D$.
By \cite{Del71}*{(3.1.7)} (see Proposition \ref{prop: Deligne} below), we have an isomorphism
\begin{equation}\label{eq: log de Rham}
	H^m_\dR(X)\cong \bbH^m(\ol X,\Omega^\bullet_{\ol X}(D)).
\end{equation}

We define the weight and Hodge filtrations on de Rham cohomology.
For any $x \in \ol X$, we say that a coordinate neighborhood $U \subset \ol X$ of $x$ is \textit{adapted to $D$}, 
if $x$ has coordinates $(0,\ldots,0)$ and $U \cap D$ is defined by the equation $t_{\mu_1}\cdots t_{\mu_h}=0$ for local coordinates $t_1,\ldots,t_g$ on $U$.
For a fixed $U$ adapted to $D$, we let $\bsmu:=\{\mu_1,\ldots,\mu_h\}$.  
Then following \cite{Del71}*{(3.1.5)}, we define the weight filtration $W_\bullet$ on $\Omega^\bullet_{\ol X}(D)$ to be the multiplicative ascending filtration defined on 
each open neighborhood $U \subset \ol X$ adapted to $D$ by giving weight $0$ to the sections of $\Omega^\bullet_{\ol X}$ and weight $1$ 
to the sections $dt_\mu/t_\mu$ for $\mu\in\bsmu$.   The Hodge filtration $F^\bullet$ on $\Omega^\bullet_{\ol X}(D)$ is defined to be the stupid filtration
\[
	F^p \Omega^m_{\ol X}(D) :=
	\begin{cases}
		 \Omega^m_{\ol X}(D) &  p \leq m,\\
		0  & p>m.
	\end{cases}
\]
We define the filtrations $\wt W_\bullet$ and $F^\bullet$ on the de Rham cohomology $H^m_\dR(X)$ to be the filtrations
induced through the isomorphism \eqref{eq: log de Rham} from the filtrations $W_\bullet$ and $F^\bullet$ on $\Omega^\bullet_{\ol X}(D)$.  In other words, we let
\begin{align*}
	\wt W_n H^m_\dR(X) &:= \Im\bigl(\bbH^m(\ol X, W_n\Omega^\bullet_{\ol X}(D))\bigr)\rightarrow H^m_\dR(X)\bigr)\\
	F^p H^m_\dR(X) &:= \Im\bigl(\bbH^m(\ol X, F^p\Omega^\bullet_{\ol X}(D))\bigr)\rightarrow H^m_\dR(X)\bigr).
\end{align*}
By \cite{Del71}*{(3.1.7)}, we have the following result.

\begin{proposition}\label{prop: Deligne}
	The resolution $\bbC \hookrightarrow \Omega^\bullet_X$ and the quasi-isomorphism $\Omega^\bullet_{\ol X}(D)
	\hookrightarrow j_*\Omega^\bullet_X$ induce filtered quasi-isomorphisms
	\[
		 (R j_* \bbC, \tau_{\leq\bullet}) \rightarrow (j_* \Omega^\bullet_X, \tau_{\leq\bullet}) 
		\leftarrow (\Omega^\bullet_{\ol X}(D), \tau_{\leq\bullet}) \rightarrow 	(\Omega^\bullet_{\ol X}(D), W_\bullet),
	\]
	hence a quasi-isomorphism
	$
		\alpha:  (R j_* \bbC, \tau_{\leq\bullet})\xrightarrow\cong(\Omega^\bullet_{\ol X}(D), W_\bullet)
	$	
	in the filtered derived category.
\end{proposition}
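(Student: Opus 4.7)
The plan is to establish each of the three displayed maps separately as a filtered quasi-isomorphism, and then compose them in the filtered derived category on $\ol X$ to obtain $\alpha$. The first two arrows rest on classical resolution results (the holomorphic Poincar\'e lemma and the logarithmic de Rham comparison), while the third arrow, which identifies the canonical filtration with the weight filtration, is the substantive step.

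For the first arrow $(R j_* \bbC, \tau_{\leq \bullet}) \to (j_* \Omega^\bullet_X, \tau_{\leq \bullet})$, I would invoke the holomorphic Poincar\'e lemma to produce the quasi-isomorphism $\bbC \hookrightarrow \Omega^\bullet_X$ on $X$. Since the inclusion $j : X \hookrightarrow \ol X$ of the complement of a simple normal crossing divisor is relatively Stein, the sheaves $\Omega^p_X$ are $j_*$-acyclic, so $j_* \Omega^\bullet_X$ represents $R j_* \bbC$. For the second arrow $(\Omega^\bullet_{\ol X}(D), \tau_{\leq \bullet}) \to (j_* \Omega^\bullet_X, \tau_{\leq \bullet})$, the underlying inclusion is a quasi-isomorphism by Grothendieck's logarithmic de Rham theorem, which is checked locally in polydisc coordinates adapted to $D$. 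Because $\tau_{\leq \bullet}$ depends only on the cohomology sheaves, both underlying quasi-isomorphisms are automatically filtered quasi-isomorphisms for $\tau_{\leq \bullet}$.

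The heart of the proof is the third arrow $(\Omega^\bullet_{\ol X}(D), \tau_{\leq \bullet}) \to (\Omega^\bullet_{\ol X}(D), W_\bullet)$, which is the identity on the underlying complex but compares two a priori distinct filtrations. I would show it is a filtered quasi-isomorphism by passing to graded pieces. The Poincar\'e residue produces an isomorphism
\[
	\Res : \Gr^W_n \Omega^\bullet_{\ol X}(D) \xrightarrow{\cong} (a_n)_* \Omega^{\bullet - n}_{D^{(n)}},
\]
where $D^{(n)}$ denotes the normalization of the $n$-fold intersections of the irreducible components of $D$ and $a_n$ is the natural map to $\ol X$. On the other hand, a local computation in coordinates adapted to $D$ identifies $R^n j_* \bbC$ with $(a_n)_* \bbC$ (up to a factor of $(2\pi i)^n$), so that $\Gr^\tau_n R j_* \bbC \cong (a_n)_* \bbC [-n]$, which by the Poincar\'e lemma on the smooth variety $D^{(n)}$ agrees with $(a_n)_* \Omega^{\bullet - n}_{D^{(n)}}$. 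Matching these two descriptions through $\Res$ then yields the desired filtered quasi-isomorphism.

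The main obstacle is this third step: constructing the residue isomorphism on $\Gr^W_n$, carrying out the parallel computation of $\Gr^\tau_n R j_* \bbC$ via the Leray filtration on nearby punctured polydiscs, and verifying that the identity map of $\Omega^\bullet_{\ol X}(D)$ is compatible with these two identifications. The remaining arguments are formal once the classical resolution theorems are in hand.
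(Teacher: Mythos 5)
Your argument is correct, but note that the paper does not prove this proposition at all: it is quoted verbatim from Deligne \cite{Del71}*{(3.1.7)--(3.1.8)}, and your proposal is precisely Deligne's proof --- the Poincar\'e lemma plus the relative Steinness of $j$ for the first arrow, the logarithmic de~Rham comparison for the second, and the identification of both $\Gr^{\tau}_n$ and $\Gr^W_n$ with $(a_n)_*\Omega^{\bullet-n}_{D^{(n)}}\simeq (a_n)_*\bbC[-n]$ via the Poincar\'e residue for the third. So the proposal is consistent with the paper's (cited) source and contains no gap.
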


Proposition \ref{prop: Deligne} implies that the filtration $\wt W_\bullet$ on $H^m_\dR(X)$ coincides with the filtration $\wt W_\bullet$ on $H^m(X,\bbR)$ through the isomorphism
$H^m(X,\bbR)\otimes\bbC\cong H^m_\dR(X)$.  Again, we denote by $W_\bullet$ the filtration $W_\bullet := \wt W_\bullet[m]$ on $H^m_\dR(X)$.
The filtrations $W_\bullet$ and $F^\bullet$ on $H^m_\dR(X)$ induces the filtrations $W_\bullet$ and $F^\bullet$ on $H^m(X,\bbR)\otimes\bbC$.

\begin{theorem}[\cite{Del71}*{Th\'eor\`eme 3.2.5}]\label{theorem: MHS}
	For any $m\in\bbZ$, the triple \[H^m(X,\bbR):=(H^m(X,\bbR), W_\bullet, F^\bullet)\] defined above is a mixed $\bbR$-Hodge structure.
\end{theorem}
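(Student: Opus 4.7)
The plan is to follow Deligne's proof that $(\Omega^\bullet_{\ol X}(D), W_\bullet, F^\bullet)$ together with the filtered real structure from $(Rj_* \bbR, \tau_{\leq\bullet})$ assembles into a cohomological mixed Hodge complex whose hypercohomology inherits a mixed Hodge structure. By Proposition \ref{prop: Deligne}, the weight filtration on $H^m(X,\bbR)$ transported from $(Rj_*\bbR, \tau_{\leq\bullet})$ agrees with that from $(\Omega^\bullet_{\ol X}(D), W_\bullet)$, so the real structure is compatible. What must be checked is that each graded piece $\Gr^W_n H^m(X,\bbR)$, with its induced Hodge filtration, is a pure $\bbR$-Hodge structure of weight $n$.

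First, I would compute the weight-graded pieces $\Gr^W_k \Omega^\bullet_{\ol X}(D)$ explicitly via the Poincar\'e residue. Writing $D = \bigcup_i D_i$ as a union of smooth irreducible components and letting $D^{(k)}$ denote the disjoint union of $k$-fold intersections $D_{i_1}\cap\cdots\cap D_{i_k}$ with inclusion $a_k: D^{(k)} \hookrightarrow \ol X$, the residue induces a canonical isomorphism
\[
	\Res: \Gr^W_k \Omega^\bullet_{\ol X}(D) \xrightarrow{\cong} a_{k*}\Omega^{\bullet-k}_{D^{(k)}}(-k),
\]
where the Tate twist accounts for the $(2\pi i)^k$ needed to match the real structure inherited from $Rj_*\bbR$. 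This identifies the $E_1$-page of the weight spectral sequence abutting to $\Gr^W H^\bullet(X,\bbR)\otimes\bbC$ with cohomology of the smooth projective varieties $D^{(k)}$. Classical Hodge theory endows $H^{m-k}(D^{(k)},\bbR)$ with a pure Hodge structure of weight $m-k$, so after the Tate twist the term $E_1^{-k,m+k}$ becomes pure of weight $m+k$. The $E_1$-differentials, alternating sums of Gysin morphisms, are morphisms of pure Hodge structures, so $E_2$ inherits pure Hodge structures of the expected weights.

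The main obstacle is proving that this weight spectral sequence degenerates at $E_2$, which is the technical heart of Deligne's argument: the general principle is that once $E_2$ carries pure Hodge structures of the expected weight, any higher differential $d_r$ ($r \geq 2$) would have to map between pure Hodge structures of different weights and therefore vanish. One also needs a compatibility check showing that the stupid filtration $F^\bullet$ on $\Omega^\bullet_{\ol X}(D)$ restricts to the stupid filtration on $a_{k*}\Omega^{\bullet-k}_{D^{(k)}}$ after the degree shift, so that the Hodge filtration passes correctly through the residue isomorphism. Granting these, one identifies $\Gr^W_n H^m(X,\bbR)\otimes\bbC$ with the appropriate $E_\infty = E_2$ term, and with the convention $W_\bullet := \wt W_\bullet[m]$ concludes that $\Gr^W_n H^m(X,\bbR)$ is pure of weight $n$, completing the verification.
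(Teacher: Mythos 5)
The paper offers no proof of this statement---it is quoted verbatim from Deligne's Th\'eor\`eme 3.2.5---and your sketch is a correct outline of exactly Deligne's argument: the residue identification of $\Gr^W_k\Omega^\bullet_{\ol X}(D)$ with $a_{k*}\Omega^{\bullet-k}_{D^{(k)}}(-k)$, purity of the $E_1$-terms of the weight spectral sequence, Gysin differentials as morphisms of Hodge structures, degeneration at $E_2$ for weight reasons, and the final shift $W_\bullet=\wt W_\bullet[m]$. The one ingredient you leave implicit under ``granting these'' is Deligne's lemma on two filtrations (the \emph{lemme des deux filtrations}), which is what guarantees that $F^\bullet$ induces the same filtration on $E_2=E_\infty$ by the three standard recipes and hence that the higher differentials really are morphisms of pure Hodge structures; naming that step would make the sketch complete.
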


%
%
\subsection{Admissible Unipotent Variation of Mixed Hodge Structures}
%
%

In this subsection, we review the definition of an admissible unipotent variation of mixed $\bbR$-Hodge structures on $X$,
and give the construction by Hain and Zucker of the mixed $\bbR$-Hodge structure on its cohomology. 

\begin{definition}
	Let $\bbV = (\bbV, W_\bullet, F^\bullet)$ be a triple, where $\bbV$ is an $\bbR$-local system on $X$,
	$W_\bullet$ is a finite ascending filtration of $\bbV$ by $\bbR$-local subsystems, and $F^\bullet$ is a finite descending filtration
	on $\cV := \bbV \otimes \cO_X$ by coherent $\cO_X$-submodules, satisfying the Griffiths transversality
	\[
		\nabla(F^p \cV) \subset F^{p-1}\cV\otimes\Omega^1_{X},
	\]
	where $\nabla$ is the integrable connection on $\cV$ defined by $\nabla:= 1\otimes d$.
	We say that $\bbV$ is a \textit{variation of mixed $\bbR$-Hodge structures} on $X$, 
	if for any point $x \in X$ giving an inclusion $i_x: \Spec \bbC \hookrightarrow X$, 
	the triple $i^*_x \bbV :=( i^*_x \bbV, W_\bullet, F^\bullet)$ is a mixed
	$\bbR$-Hodge structure, where $W_\bullet$ is the ascending filtration by $\bbR$-linear subspaces on 
	$i^*_x\bbV$ given by $W_n (i^*_x \bbV) :=i^*_x (W_n \bbV)$ for any $n\in\bbZ$, the filtration  
	$F^\bullet$ is the descending filtration by $\bbC$-linear subspaces on $i^*_x\bbV \otimes \bbC$ given by 
	$F^p(i^*_x \bbV \otimes\bbC) := i^*_x (F^p \cV) \otimes_{\cO_{x}} \bbC$ for any $p\in\bbZ$.
	\end{definition}

\begin{example}\label{example: constant}
	Suppose $V=(V,W_\bullet,F^\bullet)$ is a mixed $\bbR$-Hodge structure.  
	The constant variation of mixed $\bbR$-Hodge structure $V$ on $X$ is defined as the triple
	\[
		V_X :=(V_X, W_\bullet, F^\bullet),
	\]
	where $V_X$ and $W_\bullet$ are the constant $\bbR$-local systems on $X$ given by $V_X$ and $W_\bullet$, and the filtration
	$F^\bullet$ on $\cV:=V_X\otimes\cO_X$ is given by $F^p\cV := (F^p V_\bbC)_X \otimes\cO_X$.	
	In particular, if $V$ is a pure $\bbR$-Hodge structure of weight $n$, then 
	we say that $V_X$ is a constant variation of pure $\bbR$-Hodge structure of weight $n$.
	We will often denote $V_X$ simply by $V$ if there is no fear of confusion.
\end{example}

Example \ref{example: constant} shows that the Tate object $\bbR(n)$ of Example \ref{example: Tate} in $\MHS_\bbR$ 
defines a variation of pure $\bbR$-Hodge structure of weight $-2n$ on $X$.
We next define the notion of a unipotent variation of mixed $\bbR$-Hodge structures.

\begin{definition}
	Let $\bbV = (\bbV, W_\bullet, F^\bullet)$ be a variation of mixed $\bbR$-Hodge structures on $X$.  We say that $\bbV$ is 
	\textit{unipotent}, if $\Gr^W_n \bbV := (\Gr^W_n \bbV, F^\bullet)$ is a constant variation of pure $\bbR$-Hodge structure
	of weight $n$,  where $F^\bullet$ is the filtration on $\Gr^W_n \cV = \Gr^W_n \bbV \otimes \cO_X$ induced by $F^\bullet$.
\end{definition}

We next review the notion of admissibility of mixed Hodge structures defined in \cite{HZ87}*{(1.5)},
for the case of unipotent variation of mixed $\bbR$-Hodge structures.
Let $X \hookrightarrow \ol X$ be a smooth compactification of $X$ such that the complement $D:=\ol X\setminus X$ is a normal crossing divisor.
Let $\bbV = (\bbV, W_\bullet, F^\bullet)$ be a variation of mixed $\bbR$-Hodge structures on $X$.
We let $\cV := \bbV \otimes \cO_X$, with integrable connection $\nabla:=1\otimes d$.  By \cite{Del70}*{Proposition 5.2}, there exists
a canonical extension $\cV(D)$ of $\cV$, which is a certain locally free $\cO_{\ol X}$-module of finite type with integrable logarithmic connection along $D$.  
Since $W_n\cV := W_n\bbV\otimes \cO_X$ for $n\in\bbZ$ is an $\cO_X$-submodule of $\cV$ compatible with the connection, 
the canonical extension  of  $W_\bullet$ on $\cV$ defines a finite ascending
filtration $W_\bullet$ on $\cV(D)$.

\begin{definition}
	Let $\bbV = (\bbV, W_\bullet, F^\bullet)$ be a variation of mixed $\bbR$-Hodge structures on $X$. 
	Let $F^\bullet$ be a finite descending filtration on $\cV(D)$ extending the filtration $F^\bullet$ on $\cV$, again satisfying the Griffiths transversality with respect to $\nabla$.
	This implies in particular that $F^p\cV=F^p\cV(D)|_X$ for any $p\in\bbZ$.
	We say that $F^\bullet$ is \textit{compatible with $W_\bullet$}, if for any $n\in\bbZ$, the filtration $F^\bullet$ induced on $\Gr^{W}_n \cV(D)$
	extends the filtration $F^\bullet$ on $\Gr^W_n\cV$.
\end{definition}

The admissibility of $\bbV$ is defined as follows.

\begin{definition}\label{def: admissible}
	Suppose $\bbV$ is a unipotent variation of mixed $\bbR$-Hodge structures on $X$.	
	We say that $\bbV$ is \textit{admissible}, if it satisfies the following conditions.
	\begin{enumerate}
		\item Let $\cV(D)$ be the canonical extension of $\cV$ to $\ol X$.  
		The filtration $F^\bullet$ of $\cV$ extends to a filtration $F^\bullet$ of $\cV(D)$
		compatible with $W_\bullet$.		
		\item For any component $D_i$ of $D$, the residue $\Res_{D_i}$ 
		of $\nabla$ along $D_i$ defined in \cite{Del70}*{(3.8.3)} satisfies
		$\Res_{D_i}\bigl(W_n\cV(D)|_{D_i}\bigr) 
		\subset W_{n-2} \cV(D)|_{D_i}$ for any $n\in\bbZ$. 
	\end{enumerate}
\end{definition}

In what follows, let $\bbV=(\bbV,W_\bullet,F^\bullet)$ be an admissible unipotent variation of mixed $\bbR$-Hodge structures on $X$.
We let $\cV:= \bbV\otimes\cO_X$ with connection $\nabla:=1\otimes d$.
Then the de Rham cohomology of $X$ with coefficients in $\cV$ is defined
by
\[
	H^m_\dR(X, \cV) := \bbH^m(X, \cV\otimes\Omega^\bullet_{X}).
\]
Note that we have canonical isomorphisms $H^m(X, \bbV) \otimes \bbC =  H^m(X, \bbV_\bbC) \cong H^m_\dR(X, \cV)$.
If $\cV(D)$ is the canonical extension of $\cV$, then by \cite{Del70}*{Th\'eor\`eme 6.2} and the definition of the canonical extension, we have an isomorphism
\[
	H^m_\dR(X, \cV) \cong \bbH^m(\ol X, \cV(D)\otimes\Omega^\bullet_{\ol X}(D)).
\]
We define the filtrations $W_\bullet$ and $F^\bullet$ on $\cV(D)\otimes\Omega^\bullet_{\ol X}(D)$ by 
\[
	\begin{split}
		W_{n}(\cV(D)\otimes\Omega^\bullet_{\ol X}(D)) &:= \sum_{k\in\bbZ} W_{n-k} \cV(D)\otimes W_{k}\Omega^\bullet_{\ol X}(D), \\
		F^p(\cV(D)\otimes\Omega^\bullet_{\ol X}(D)) &:= \sum_{q\in\bbZ} F^{p-q} \cV(D)\otimes F^{q}\Omega^\bullet_{\ol X}(D).
	\end{split}
\]
The following result is due to Hain and Zucker.

\begin{proposition}[\cite{HZ87}*{(8.6) Proposition}]\label{prop: MHS}
	We denote by $\wt W_\bullet$ and $F^\bullet$ the filtrations induced on $H^m_\dR(X, \cV)$
	from the filtrations $W_\bullet$ and $F^\bullet$ of $\cV(D)\otimes\Omega^\bullet_{\ol X}(D)$, 
	and we denote again by $\wt W_\bullet$ the filtration on 
	$H^m(X, \bbV) \subset H^m_\dR(X, \cV)$ induced from $\wt W_\bullet$.   
	Then the triple 
	\[
		H^m(X,\bbV):=(H^m(X, \bbV), \wt W_\bullet[m], F^\bullet)
	\]
	 is a mixed $\bbR$-Hodge structure.
\end{proposition}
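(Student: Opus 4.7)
The strategy is to equip the filtered complex
\[
	K^\bullet := \bigl(\cV(D) \otimes \Omega^\bullet_{\ol X}(D),\ W_\bullet,\ F^\bullet\bigr)
\]
together with an appropriate $\bbR$-structure coming from $\bbV$ with the data of a cohomological mixed $\bbR$-Hodge complex in the sense of Deligne that computes $H^m(X, \bbV)$. Once such a structure is produced, the Proposition follows formally, with the shift $[m]$ in the weight filtration being the standard convention reconciling filtrations on complexes with filtrations on their cohomology exactly as in Theorem \ref{theorem: MHS}.

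The plan then proceeds by induction on the length of $W_\bullet$ on $\bbV$. The base case is $\bbV = V_X$ constant, associated to a pure $\bbR$-Hodge structure $V$ of weight $n$; here the canonical extension is $\cV(D) = V \otimes_\bbR \cO_{\ol X}$ with trivial logarithmic connection, so $K^\bullet = V_\bbC \otimes_\bbC \Omega^\bullet_{\ol X}(D)$ with the two filtrations equal to the obvious tensor products of the Hodge filtration on $V$ with the Deligne filtrations on $\Omega^\bullet_{\ol X}(D)$. The statement here reduces directly to Theorem \ref{theorem: MHS}, combined with the stability of $\MHS_\bbR$ under tensoring with a pure $\bbR$-Hodge structure. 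For the inductive step, I apply the functor $H^\bullet(X, -)$ to a short exact sequence $0 \to W_n \bbV \to \bbV \to \bbV/W_n \bbV \to 0$ of admissible unipotent variations and argue that the induced long exact sequence lives in $\MHS_\bbR$: the outer terms carry mixed Hodge structures by the inductive hypothesis (after verifying that admissibility descends to sub- and quotient variations), and the strict compatibility of the connecting maps with $(W_\bullet, F^\bullet)$ follows from the strictness properties of the filtered complex $K^\bullet$.

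The heart of the argument, and the main obstacle, is to identify $\Gr^W_n K^\bullet$ up to quasi-isomorphism with a direct sum of constant-coefficient logarithmic de Rham complexes on closed strata of $D$, twisted by the pure graded pieces of $\bbV$ and by Tate twists coming from Poincar\'e residues. Here the admissibility conditions of Definition \ref{def: admissible} enter crucially: condition (1) ensures that $F^\bullet$ on $\cV(D)$ descends to a well-defined filtration on $\Gr^W \cV(D)$, so that $F^\bullet$ on $\Gr^W K^\bullet$ makes sense, while condition (2), asserting that $\Res_{D_i}$ lowers $W_\bullet$ by $2$, is precisely what guarantees that the Poincar\'e residue operators appearing as differentials on the $E_1$-page of the weight spectral sequence are morphisms of pure $\bbR$-Hodge structures of the correct weight. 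Granted this identification, the $E_1$-degeneration of the Hodge-to-de Rham spectral sequence and the $E_2$-degeneration of the weight spectral sequence both reduce stratum-by-stratum to Deligne's classical results recalled in Theorem \ref{theorem: MHS}, and the induction closes.
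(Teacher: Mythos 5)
The paper gives no proof of this proposition: it is imported verbatim from Hain--Zucker \cite{HZ87}*{(8.6) Proposition}, so there is no internal argument to measure your proposal against. Judged on its own, the main line of your argument --- the first and third paragraphs, which package $(\cV(D)\otimes\Omega^\bullet_{\ol X}(D), W_\bullet, F^\bullet)$ into a cohomological mixed $\bbR$-Hodge complex by identifying $\Gr^W_n$, via Poincar\'e residues, with Tate-twisted constant-coefficient logarithmic de Rham complexes on the closed strata of $D$ tensored with the pure pieces $\Gr^W\bbV$ --- is the standard route and is essentially what Hain and Zucker do; unipotency is what makes the coefficients on the strata constant, and the admissibility conditions enter where you say they do. One sharpening: condition (2) of Definition \ref{def: admissible} is needed even earlier than the $E_1$-page, namely to ensure that the log-pole part of $\nabla$ carries $W_n$ of the convolution filtration into $W_{n-1}$, so that $W_\bullet$ is a filtration by subcomplexes in the first place.

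Two steps, however, are genuine gaps. First, the induction on the length of $W_\bullet\bbV$ in your second paragraph does not close: knowing that the outer terms of the long exact sequence attached to $0\to W_n\bbV\to\bbV\to\bbV/W_n\bbV\to 0$ carry mixed $\bbR$-Hodge structures does not endow the middle term with one, and the ``strictness properties of $K^\bullet$'' you invoke to repair this are exactly the $E_1$- and $E_2$-degenerations that the cohomological mixed Hodge complex is meant to deliver, so the step is circular. It is also unnecessary: once the structure of your third paragraph is established, Deligne's Scholie (8.1.9) of Hodge III yields the mixed Hodge structure on $H^m$ directly, with $\Dec(\wt W)_\bullet$ agreeing with the shift $\wt W_\bullet[m]$ of the statement, and no d\'evissage on $\bbV$ is required. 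Second, the real-filtered component of the mixed Hodge complex is asserted (``an appropriate $\bbR$-structure coming from $\bbV$'') but never produced. Unlike the constant-coefficient case of Proposition \ref{prop: Deligne}, the canonical filtration $\tau_{\leq\bullet}$ on $Rj_*\bbV$ is \emph{not} filtered quasi-isomorphic to the convolution weight filtration when $\bbV$ has nontrivial unipotent monodromy (the graded pieces of $\tau$ see monodromy coinvariants, not $\Gr^W\bbV$), so one must construct a real weight-filtered complex by hand and prove in particular that $\wt W_\bullet$ on $H^m_\dR(X,\cV)$ is defined over $\bbR$. This is a substantive point --- it is precisely the difficulty that the present paper later circumvents in \S\ref{section: log Dolbeault} by means of Burgos's logarithmic Dolbeault complexes --- and your outline does not address it.
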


If $\bbV = (\bbV, W_\bullet, F^\bullet)$ is an admissible unipotent variation of mixed $\bbR$-Hodge structures on $X$, 
then we let $\bbV(n):= \bbV \otimes \bbR(n)$  for any $n\in\bbZ$, that is, $\bbV(n) =\bbV \otimes \bbR \ul u^n$ with the filtrations given by
$W_m(\bbV(n)) = (W_{m+2n} \bbV)\otimes\bbR\ul u^n$ and $F^p(\cV(n)) = (F^{p+n} \cV)\otimes\bbC\ul u^n$.
Then we have
$
	H^m(X, \bbV(n)) = H^m(X, \bbV)(n)
$
as mixed $\bbR$-Hodge structures.

%
%
%
%
%
\section{Geometric Polylogarithm}\label{section: polylogarithm}
%
%
%
%
%

In what follows, we let $g$ be a positive integer, and we let $X:=\bbG_{\mathrm{m}}^g=\Spec\bbC[t_1^{\pm1},\ldots,t_g^{\pm1}]$ 
be the $g$-fold product of the 
multiplicative group over $\bbC$.  The purpose of this section is to give the definition and an explicit construction
of the geometric polylogarithm class of $X$.
We start with the definition of the logarithm sheaf on $X$.

%
%
\subsection{The Logarithm Sheaf}
%
%

In this subsection, we define the logarithm sheaf $\bbLog$ of $X$.
In what follows, we let $\bbR(\bone): = \bbR(1)^{\oplus g}$, which is a pure $\bbR$-Hodge structure of weight $-2$.
We denote by $\bbN$ the set of natural numbers, and for any $\bsk = (k_\mu) \in \bbN^g$, we let $\abs{\bsk}\coloneqq 
k_1 + \cdots + k_g$.

For an integer $N>0$, consider  the free $\cO_{X}$-module
\[
	\cLog^N\coloneqq\prod_{\bsk \in\bbN^g, |\bsk|\leq N} \cO_{X} \ul\omega^\bsk
\]
with connection  
$
	\nabla\colon\cLog^N\rightarrow \cLog^N\otimes \Omega^1_{X}
$
given by
\begin{equation}\label{eq: connection}
	\nabla(\ul\omega^\bsk)= \sum_{\mu=1}^g \ul\omega^{\bsk+1_\mu} \otimes \frac{dt_\mu}{t_\mu},
\end{equation}
where $1_\mu$ denotes the element in $\bbN^g$ whose $\mu$-th component is $1$ and the other components are $0$,
and we let $\ul\omega^\bsk=0$ if $|\bsk|>N$.
We define the filtrations $W_\bullet$ and $F^\bullet$ on $\cLog^N$ by 
\begin{align*}
	W_{-2m} \cLog^N&\coloneqq W_{-2m+1} \cLog^N\coloneqq \prod_{m\leq |\bsk|\leq N}  \cO_{X} \ul\omega^\bsk,  &
	F^{-p} \cLog^N&\coloneqq\prod_{\abs{\bsk}\leq p}  \cO_{X} \ul\omega^\bsk.
\end{align*}
By choosing local branches of $\log t_\mu$, we define sections
\begin{equation}\label{eq: transformation}
	\ul u^\bsk\coloneqq(2\pi i)^{\abs{\bsk}}\exp\biggl(\sum_{\mu=1}^g (-\log t_\mu) \omega_\mu\biggr)\cdot\ul\omega^\bsk
\end{equation}
of $\cLog^N$,
where $\omega_\mu$ is an operator which acts on $\ul\omega^\bsk$ by 
$\omega_\mu\cdot\ul\omega^\bsk = \ul\omega^{\bsk + 1_\mu}$.  
We have
\begin{multline*}
	\nabla(\ul u^\bsk) =   (2\pi i)^{\abs{\bsk}}\exp\biggl(\sum_{\nu=1}^g(-\log t_\nu)\omega_\nu\biggr) \cdot \sum_{\mu=1}^g \ul \omega^{\bsk+1_\mu} \otimes \frac{dt_\mu}{t_\mu}\\
	 -  (2\pi i)^{\abs{\bsk}}\sum_{\mu=1}^g\exp\biggl(\sum_{\nu=1}^g(-\log t_\nu)\omega_\nu\biggr)\omega_\mu\cdot\ul \omega^\bsk\otimes \frac{dt_\mu}{t_\mu} =0,
\end{multline*}
hence $\ul u^\bsk$ gives a horizontal section of $\cLog^N$ with respect to the connection $\nabla$.
Since the transfomation \eqref{eq: transformation} is invertible, the sections
$\ul u^\bsk$ are linearly independent over $\bbC$, hence span the space of horizontal sections of $\cLog^N$.

We define the $\bbR$-local system $\bbLog^N$  by
\[
	\bbLog^N\coloneqq\prod_{\bsk\in\bbN^g, \abs{\bsk}\leq N}\bbR \ul u^\bsk,
\] 
which gives an $\bbR$-structure on the $\bbC$-local system of horizontal sections of $\cLog^N$.
Although the basis $\ul u^\bsk$ depends on the branches of $\log t_\nu$ and does not give a global basis of $\bbLog^N$,
the sheaf $\bbLog^N$ is independent of the branches of $\log t_\mu$.

We define the weight filtration on $\bbLog^N$ by 
\[
	W_{-2m} \bbLog^N= W_{-2m+1} \bbLog^N\coloneqq\prod_{m\leq\abs\bsk\leq N}  \bbR \ul u^\bsk,
\]
which is compatible with the weight filtration $W_\bullet$ on $\cLog^N$ through the natural isomorphism 
\[
	\bbLog^N\otimes\cO_X \cong \cLog^N.
\]
Then the triple $\bbLog^N=(\bbLog^N, W_\bullet, F^\bullet)$ is a variation of mixed $\bbR$-Hodge structures on $X$.
By mapping the sections 
$\ul u^\bsk$ of $\bbLog^N$ to $\ul u_1^{k_1} \cdots \ul u_g^{k_g}$, where $(\ul u_1, \ldots, \ul u_g)$ is the standard basis of $\bbR(\bone)$, we obtain an isomorphism
\[
	\Gr^W_\bullet \bbLog^N \cong \prod_{k=0}^N \Sym^k \bbR(\bone)
\]
of variations of $\bbR$-Hodge structures on $X$,
where we view $\Sym^k\bbR(\bone)$ as a constant variation of pure $\bbR$-Hodge structures on $X$.
We see from this fact that $\bbLog^N$ is a unipotent variation of mixed $\bbR$-Hodge structures.  

The unipotent variation of mixed $\bbR$-Hodge structues
$\bbLog^N$ is known to satisfy the \textit{splitting principle}, given as follows.
Let $d \geq 1$ and let $\zeta = (\zeta_1, \ldots, \zeta_g)$ be an element in $X$ whose components are $d$-th roots of unity,
and we denote by $i_\zeta: \Spec \bbC \hookrightarrow X$ the inclusion corresponding to $\zeta$.
By mapping the basis $\ul \omega^\bsk$ of $i^*_\zeta \cLog$ to $\ul \omega_1^{k_1} \cdots \ul \omega_g^{k_g}$,
we obtain an isomorphism
\begin{equation}\label{eq: splitting}
	i^*_\zeta \bbLog^N \cong \prod_{k=0}^N \Sym^k \bbR(\bone)
\end{equation}
of mixed $\bbR$-Hodge structures.

We let  $\ol X:= \bbP^1 \times \cdots \times \bbP^1$
be the $g$-fold product of $\bbP^1$.  We have a natural inclusion $X \hookrightarrow \ol X$ with complement $D: = \ol X \setminus X$.
We denote by $\Omega^\bullet_{\ol X}(D)$ the complex of differentials on $\ol X$ with logarithmic singularities along $D$.
Note that $\cLog^N$ extends naturally to a coherent $\cO_{\ol X}$-module
\[
	\cLog^N(D)\coloneqq\prod_{\abs\bsk\leq N} \cO_{\ol X} \ul\omega^\bsk,
\]
and \eqref{eq: connection} defines a connection 
\[
	\nabla\colon\cLog^N(D) \rightarrow \cLog^N(D) \otimes \Omega^1_{\ol X}(D)
\] 
with logarithmic singularities along $D$.
Then $\cLog^N(D)$ is the canonical extension of $\cLog^N$ to $\ol X$, and the filtrations
 $W_\bullet$ and $F^\bullet$ on $\cLog^N$ naturally extend to $\cLog^N(D)$.
We see from the existence of such $\cLog^N(D)$ that $\bbLog^N$ is an admissible unipotent variation of mixed $\bbR$-Hodge structures.

For integers $N > 0$, we have natural projections 
\begin{equation}\label{eq: projection}
	\bbLog^{N}\rightarrow\bbLog^{N-1}.
\end{equation}
This gives an exact sequence
\begin{equation}\label{eq: SES}
	0 \rightarrow \Sym^{N} \bbR(\bone) \rightarrow \bbLog^{N} \rightarrow \bbLog^{N-1} \rightarrow 0
\end{equation}
of variations of mixed $\bbR$-Hodge structures.  

\begin{definition}\label{def: log}
	We define the \emph{logarithm sheaf} $\bbLog$ to be the projective system with respect to \eqref{eq: projection}
	of admissible unipotent variations of mixed $\bbR$-Hodge structures $\bbLog^N$ on $X$.	
\end{definition}

We define the cohomology of $X$ with coefficients in $\bbLog$ by
\[
	H^m(X, \bbLog)\coloneqq \varprojlim_N H^m(X,  \bbLog^N)
\]
for any $m\in\bbZ$.  We will calculate this cohomology in the next subsection.

%
%
\subsection{Cohomology of the Logarithm Sheaf}
%
%

In \cite{HK18}*{Corollary 7.1.6}, Huber and Kings calculated the cohomology of the logarithm sheaf
for a general commutative group scheme 
given as an extension of an abelian variety by a torus.   Applied to $X$, this gives the following.

\begin{proposition}\label{prop: Log on X}
	We have
	\[
		H^m(X, \bbLog(g))\coloneqq\varprojlim_N H^m(X, \bbLog^N(g)) =
		\begin{cases}
			\bbR & m=g, \\ 
			0  & m\neq g.
		\end{cases}
	\]
\end{proposition}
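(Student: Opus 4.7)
The plan is to compute $H^m(X,\bbLog(g))$ directly via the logarithmic de Rham complex on the compactification $\ol X = (\bbP^1)^g$, exploiting the Koszul structure of the connection on $\cLog$. This keeps the argument self-contained, though the statement is also a special case of \cite{HK18}*{Corollary 7.1.6}.

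First, by Proposition \ref{prop: MHS} applied to the canonical extension $\cLog^N(D)$, one has $H^m_\dR(X,\cLog^N) \cong \bbH^m(\ol X,\cLog^N(D)\otimes\Omega^\bullet_{\ol X}(D))$. The sheaf $\Omega^1_{\bbP^1}(\{0,\infty\}) \cong \cO_{\bbP^1}$ via the global trivialization $dt/t$, so $\Omega^p_{\ol X}(D)$ is a free $\cO_{\ol X}$-module, and $\cLog^N(D) = \bigoplus_{|\bsk| \leq N}\cO_{\ol X}\,\ul\omega^\bsk$ is visibly free as well. Each $\cLog^N(D)\otimes\Omega^p_{\ol X}(D)$ therefore has cohomology concentrated in degree zero on $\ol X$, and the hypercohomology equals the cohomology of the complex of global sections
\[
    C^\bullet_N \;=\; A^N \otimes \bigwedge\nolimits^\bullet V, \qquad d \;=\; \sum_{\mu=1}^{g}\omega_\mu \otimes (\xi_\mu \wedge -),
\]
with $A^N := \bbC[\omega_1,\ldots,\omega_g]/\mathfrak m^{N+1}$ (identifying $\omega_\mu$ with $\ul\omega^{1_\mu}$ and $\mathfrak m$ the augmentation ideal) and $V := \bigoplus_\mu \bbC\,\xi_\mu$ (identifying $\xi_\mu$ with $dt_\mu/t_\mu$).

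Passing to the inverse limit, $C^\bullet_\infty := \wh A \otimes \bigwedge^\bullet V$ with $\wh A := \bbC[[\omega_1,\ldots,\omega_g]]$ is the Koszul complex of the regular sequence $(\omega_1,\ldots,\omega_g)$. Standard Koszul theory gives $H^m(C^\bullet_\infty) = 0$ for $m\neq g$ and $H^g(C^\bullet_\infty) = \wh A/\mathfrak m = \bbC$, generated by $[\xi_1\wedge\cdots\wedge\xi_g]$, which under our identifications is the class $[dt_1/t_1\wedge\cdots\wedge dt_g/t_g] \in H^g_\dR(X)$. This class lies in $F^g \cap W_{2g}$, so $H^g(X,\bbLog) \cong \bbR(-g)$ as a mixed $\bbR$-Hodge structure, and the Tate twist $(g)$ produces $H^g(X,\bbLog(g)) \cong \bbR$.

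To go from $H^m(C^\bullet_\infty)$ to the required $\varprojlim_N H^m(C^\bullet_N)$, I analyze the transition maps directly. In degree $g$, $H^g(C^\bullet_N) = A^N/\mathfrak m A^N = \bbC$ with identity transitions, so $\varprojlim = \bbC$. In degree zero, $H^0(C^\bullet_N) = \{a\in A^N : \omega_\mu a = 0 \text{ for all } \mu\}$ is the top-degree part of $A^N$ (monomials with $|\bsk| = N$), which is killed by the transition $A^N\twoheadrightarrow A^{N-1}$, so $\varprojlim = 0$. For intermediate $0 < m < g$, I would combine the Milnor $\varprojlim^1$ exact sequence with an inductive Koszul argument on the truncated rings $A^N$ to conclude $\varprojlim H^m(C^\bullet_N) = 0$.

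The main obstacle is this middle-degree analysis: although the Koszul computation on $\wh A$ gives the answer immediately, verifying that the individual $H^m(C^\bullet_N)$ assemble into an inverse system with vanishing limit (and no $\varprojlim^1$ contribution) requires careful bookkeeping. A possibly cleaner alternative is to note that $\cLog_X$ decomposes as a completed external tensor product of the one-dimensional logarithm sheaves $\cLog_{\bbG_m}$ over the $g$ factors, visible from the additive form of the connection and compatible with the weight and Hodge filtrations. A Künneth argument then reduces the full statement to the case $g = 1$, where $C^\bullet_N$ is the two-term sequence $A^N \xrightarrow{\omega} A^N$ whose cohomology and inverse limit are transparent.
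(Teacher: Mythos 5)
Your approach is correct in substance but genuinely different from the paper's. The paper proves the proposition via Lemma \ref{lemma: Log on X}: it inducts along the exact sequences $0\to\Sym^N\bbR(\bone)\to\bbLog^N\to\bbLog^{N-1}\to0$, showing by a degree-by-degree argument on $\Gamma(\bbA^g,\cLog^N(D)\otimes\Omega^\bullet_{\bbA^g}(Y))$ (using that $\bbA^g$ is affine) that the transition maps $H^m(X,\bbLog^N)\to H^m(X,\bbLog^{N-1})$ are \emph{zero} for $m\neq g$ and isomorphisms for $m=g$, the latter by a dimension count. You instead stay on the proper model $\ol X=(\bbP^1)^g$, where $\Omega^p_{\ol X}(D)$ and $\cLog^N(D)$ are free $\cO_{\ol X}$-modules, hence $\Gamma$-acyclic with one-dimensional space of global sections per basis element; this collapses the hypercohomology to the finite-dimensional truncated Koszul complex $A^N\otimes\bigwedge^\bullet V$, which is cleaner and more transparent than the paper's infinite-dimensional affine model (a spot check confirms your complex reproduces the paper's dimension formula \eqref{eq: 9610}). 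The one step you flag as an obstacle --- the intermediate degrees $0<m<g$ --- is not actually one: since the transition maps $C^m_N\to C^m_{N-1}$ are (split) surjective, the Milnor sequence exhibits $\varprojlim_N H^m(C^\bullet_N)$ as a quotient of $H^m(C^\bullet_\infty)$, which you have already shown vanishes for $m\neq g$; no analysis of $\varprojlim^1$ or of the individual $H^m(C^\bullet_N)$ is needed because the proposition is a statement about $\varprojlim$ of cohomologies, not about cohomology of a limit complex. Two small points to tidy: the comparison $H^m_\dR(X,\cLog^N)\cong\bbH^m(\ol X,\cLog^N(D)\otimes\Omega^\bullet_{\ol X}(D))$ is Deligne's theorem on the canonical extension (cited in \S2 of the paper), not Proposition \ref{prop: MHS}; and since the Koszul computation is over $\bbC$, you should say explicitly that vanishing of $H^m(X,\bbLog^N_\bbC)\cong H^m_\dR(X,\cLog^N)$ forces vanishing of the real form $H^m(X,\bbLog^N)$, with the $m=g$ case handled as you do via the generator $dt_1/t_1\wedge\cdots\wedge dt_g/t_g$ of $H^g(\bbG_{\mathrm{m}}^g,\bbR)\cong\bbR(-g)$. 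Note finally that the paper's stronger conclusion (vanishing transition maps) is reused later, e.g.\ for the Mittag--Leffler conditions in Proposition \ref{prop: Klingon}, so your shortcut proves the proposition but would not fully substitute for Lemma \ref{lemma: Log on X}; your graded-piece structure on $A^N$ would, however, recover that statement with a little more work.
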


Proposition \ref{prop: Log on X} follows immediately from the following Lemma \ref{lemma: Log on X}
concerning the cohomology of $X$ with coefficients in $\bbLog^N$,
a proof of which we give for the convenience of the reader.

\begin{lemma}\label{lemma: Log on X}
	For integers $m$ and $N>0$, the natural map
	\[
		H^m(X, \bbLog^N) \rightarrow H^m(X, \bbLog^{N-1})
	\]
	is the zero map if $m\neq g$, and is an isomorphism
	if $m=g$.
	Moreover, we have 
	\[
		H^g(X,\bbLog^N)\xrightarrow\cong\cdots\xrightarrow\cong H^g(X,\bbLog^0)\cong\bbR(-g).
	\]
\end{lemma}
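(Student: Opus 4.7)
My plan is to identify $H^*(X, \bbLog^N)$ with the cohomology of an explicit Koszul complex and then read off both the cohomology groups and the transition maps directly. Since $X(\bbC) = (\bbC^\times)^g$ is a $K(\bbZ^g, 1)$, I would replace sheaf cohomology by group cohomology $H^*(\bbZ^g, V^{(N)})$, where $V^{(N)}$ is the fiber of $\bbLog^N$ at a chosen basepoint. Using \eqref{eq: transformation}, the monodromy around the $\mu$-th loop acts unipotently on $V^{(N)}$ with logarithm $N_\mu \colon \ul u^\bsk \mapsto -\ul u^{\bsk+1_\mu}$. This identifies $V^{(N)}$ with the $R$-module $R/\mathfrak{m}^{N+1}$, where $R := \bbR[\omega_1, \ldots, \omega_g]$ and $\mathfrak{m} := (\omega_1, \ldots, \omega_g)$, and since the monodromy is unipotent with commuting nilpotent generators, the group cohomology is computed by the Koszul cochain complex
\[
K^\bullet(V^{(N)}) := V^{(N)} \otimes_\bbR \bigwedge^\bullet \bbR^g, \qquad d(v \otimes e_I) = \sum_\mu \omega_\mu v \otimes e_\mu \wedge e_I.
\]

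The structural observation to exploit is that the $\bbR$-linear splitting $V^{(N)} = \bigoplus_{k=0}^N \Sym^k \bbR^g$ gives $K^\bullet(V^{(N)})$ a bigrading $K^{p,k} := \Sym^k \bbR^g \otimes \bigwedge^p \bbR^g$ in which $d$ has bidegree $(1,1)$. The integer $j := p - k$ is therefore preserved by $d$, giving a direct sum decomposition $K^\bullet(V^{(N)}) = \bigoplus_j T_j^{(N)}$ into subcomplexes, where $T_j^{(N)}$ collects the terms with $0 \leq p - j \leq N$ and $0 \leq p \leq g$. For the untruncated version (corresponding to $V^{(\infty)} = R$), the sum $\bigoplus_j T_j^{(\infty)}$ is the Koszul complex $R \otimes \bigwedge^\bullet \bbR^g$ of the regular sequence $(\omega_1, \ldots, \omega_g)$, whose only cohomology is a copy of $\bbR$ in degree $g$ located in $T_g^{(\infty), g} = \Sym^0 \otimes \bigwedge^g$. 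Hence $T_g^{(\infty)} = T_g^{(N)}$ is the single term $\bbR$ in degree $g$ while $T_j^{(\infty)}$ is exact for every $j < g$. For $j < g$, I would then apply the short exact sequence relating $T_j^{(\infty)}$ to its truncation $T_j^{(N)}$ (obtained by killing all terms in exterior degrees $p > N+j$) together with the exactness of $T_j^{(\infty)}$ to conclude that $H^p(T_j^{(N)})$ is concentrated in the single exterior degree $p = N + j$.

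For the transition map, the projection \eqref{eq: projection} corresponds to the quotient $V^{(N)} \twoheadrightarrow V^{(N-1)}$ killing $\Sym^N \bbR^g$; this preserves the bigrading and thus induces maps $T_j^{(N)} \to T_j^{(N-1)}$ for each $j$. On the $T_g$-component it is the identity $\bbR \to \bbR$ (both generated by the class of $\ul u^{\bszero}$), which yields the asserted isomorphism on $H^g$; iterating back to $\bbLog^0 = \bbR$ and invoking K\"unneth (using $H^1(\bbG_{\mathrm{m}}, \bbR) \cong \bbR(-1)$ as mixed Hodge structures) then gives $H^g(X, \bbLog^N) \cong \bbR(-g)$ as required by the ``moreover''. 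On each $T_j^{(N)}$ with $j < g$, the source cohomology lies in exterior degree $p = N + j$ while $H^*(T_j^{(N-1)})$ lies in degree $p = N + j - 1$, so the induced map at each $p$ lands in a zero group and the transition on $H^p$ vanishes for every $p \neq g$. The only nontrivial technical input is the exactness of $T_j^{(\infty)}$ for $j < g$, equivalent to Koszul exactness for the regular sequence $(\omega_1, \ldots, \omega_g)$; the remainder is careful bookkeeping with the bigrading.
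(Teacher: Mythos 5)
Your proof is correct, but it takes a genuinely different route from the paper. The paper runs the long exact sequence attached to $0\to\Sym^N\bbR(\bone)\to\bbLog^N\to\bbLog^{N-1}\to0$ and establishes the two halves of the lemma separately: for $m\neq g$ it shows $H^m(X,\Sym^N\bbR(\bone))\to H^m(X,\bbLog^N)$ is surjective by an explicit de Rham argument on the affine chart $\bbA^g$ (representing classes by global logarithmic forms and inductively integrating away the lower graded pieces), and for $m=g$ it closes the exact sequence \eqref{eq: aLES} by a dimension count, proving the binomial identity \eqref{eq: 9610} by induction. You instead pass to group cohomology of $\pi_1(X)\cong\bbZ^g$, identify the fiber with $R/\mathfrak{m}^{N+1}$, and exploit the bidegree-$(1,1)$ property of the Koszul differential to split the complex into subcomplexes $T^{(N)}_j$ indexed by $j=p-k$; Koszul exactness for the regular sequence $(\omega_1,\dots,\omega_g)$ then localizes $H^\bullet(T^{(N)}_j)$ in the single degree $N+j$ for $j<g$, so the transition map shifts the degree of concentration and must vanish, while the $T_g$ piece is rigidly $\bbR$ in degree $g$. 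This buys you the $m=g$ case for free (no dimension count, since $H^g(T^{(N)}_j)=0$ for all $j<g$ — a point worth stating explicitly, as it is what makes the identity on the $T_g$ summand an isomorphism on all of $H^g$), and it makes the vanishing of the transition maps structurally transparent rather than a consequence of surjectivity in the long exact sequence. Two small points deserve a sentence in a full write-up: the group-cohomology differentials are $\gamma_\mu-1$ rather than $N_\mu=\log\gamma_\mu$, and one should note that these differ by commuting unipotent (hence invertible) factors, so the two Koszul complexes are isomorphic compatibly with the truncations $V^{(N)}\twoheadrightarrow V^{(N-1)}$; and the identification $H^g(X,\bbLog^0)\cong\bbR(-g)$ as a Hodge structure still requires the K\"unneth computation \eqref{eq: 321}, which your topological argument does not supply on its own but which you correctly invoke.
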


\begin{proof}
	For $N>0$, the exact sequence \eqref{eq: SES} gives rise to the long exact sequence
	\begin{align}\label{eq: LES}\begin{split}
		0 &\rightarrow H^0(X,  \Sym^{N} \bbR(\bone)) \rightarrow H^0(X, \bbLog^{N}) \rightarrow H^0(X,  \bbLog^{N-1}) \\
		&\rightarrow H^1(X,  \Sym^{N} \bbR(\bone)) \rightarrow H^1(X, \bbLog^{N}) \rightarrow H^1(X,  \bbLog^{N-1})\\
		&\rightarrow \cdots \\
		\cdots & \rightarrow H^{g}(X, \Sym^{N} \bbR(\bone)) \rightarrow H^g(X, \bbLog^{N}) \rightarrow H^g(X,  \bbLog^{N-1})
		\rightarrow  0.
	\end{split}\end{align}
	In order to prove the statement for $m\neq g$,
	it suffices to show that the morphism 
	\begin{equation}\label{eq: surjection}
		H^m(X, \Sym^{N}\bbR(\bone)) \rightarrow H^m(X, \bbLog^{N})
	\end{equation}
	is surjective.
	We prove that the corresponding morphism for de Rham cohomology is surjective.
	We first let $\cH := \bbR(\bone) \otimes \cO_{\ol X}$ be the coherent $\cO_{\ol X}$-module on $\ol X$ with connection $\nabla := 1 \otimes d$.
	Let $\bbA^g := \Spec \bbC[t_1,\ldots,t_g] \subset \ol X= \bbP^1\times\cdots\times\bbP^1$, and we denote by $Y$ the normal crossing divisor of $\bbA^g$ defined by $t_1 \cdots t_g =0$.  
	Then the restriction to $\bbA^g$ of $\cH$ and $\cLog^N(D)$ give coherent $\cO_{\bbA^g}$-modules with logarithmic connection along $Y$, and we have
	\begin{align*}
		H^m_\dR(X, \Sym^N \cH) &\cong \bbH^m(\bbA^g, \Sym^N \cH \otimes \Omega^\bullet_{\bbA^g}(Y)) = H^m(\Gamma(\bbA^g, \Sym^N \cH \otimes \Omega^\bullet_{\bbA^g}(Y))) \\
		 H^m_\dR(X, \cLog^N)&\cong \bbH^m(\bbA^g, \cLog^N(D) \otimes \Omega^\bullet_{\bbA^g}(Y)) =  H^m(\Gamma(\bbA^g, \cLog^N(D) \otimes \Omega^\bullet_{\bbA^g}(Y))),
	\end{align*}
	where the equalities on the right follow from the fact that $\bbA^g$ is affine.
	This proves in particular that the de Rham cohomology is zero for $m > g$, hence it is sufficient to prove our assertion for non-negative integers $m<g$.
	Let $[\xi]$ be a class in $H^m_\dR(X, \cLog^N)$.   Since $\bbA^g$ is affine, the class $[\xi]$ is represented by a cocycle
	$\xi \in \Gamma(\bbA^g, \cLog^N(D) \otimes \Omega^m_{\bbA^g}(Y))$.  
	Since $\cLog^N(D) \cong \bigoplus_{k=0}^N \Sym^k \cH$ as $\cO_{\bbA^g}$-modules, we may write
	\[
		\xi = \sum_{k=0}^N \xi_k
	\]
	for $\xi_k \in \Gamma(\bbA^g, \Sym^k \cH \otimes \Omega^m_{\bbA^g}(Y))$.  Then the cocycle condition $\nabla(\xi) = 0$ gives
	\[
		\nabla(\xi) =  \sum_{k=0}^N \nabla(\xi_k)  = \sum_{k=0}^N  \left((1 \otimes d)(\xi_k)  
		 + \sum_{\mu=1}^{g} \omega_\mu\cdot\xi_k \wedge \frac{dt_\mu}{t_\mu}  \right) =0.
	\]
	This gives the differential equations
	\begin{align*}
		(1 \otimes d)(\xi_0) &= 0, &
		 (1 \otimes d)(\xi_{k}) & + \sum_{\mu=1}^{g}\omega_\mu\cdot\xi_{k-1} \wedge \frac{dt_\mu}{t_\mu}    =0
	\end{align*}
	for $k=1, \ldots, N$.  We prove by induction on $k$ that the class of $\xi$ may be represented by an element 
	satisfying $\xi_i = 0$ for $i < N$.   Let $k\geq 0$ be an integer $<N$ and suppose $\xi_i = 0$ for $i < k$.
	Then the above differential equation becomes
	\begin{align*}
		(1 \otimes d)(\xi_k) &= 0, &
		 (1 \otimes d)(\xi_{k+1}) & + \sum_{\mu=1}^{g} \omega_\mu  \cdot\xi_{k} \wedge \frac{dt_\mu}{t_\mu}    =0.
	\end{align*}
	The first differential equation shows that $\xi_k$ is a cocycle representing an element in 
	\[
		H^m(\Gamma(\bbA^g, \Sym^k \cH \otimes\Omega^\bullet_{\bbA^g}(Y))) \cong H^m_\dR(X, \Sym^k \cH).
	\]
	The second differential equation shows that if we write 
	\[
		\xi_k = \sum_{\boldsymbol\mu = \{ \mu_1, \cdots, \mu_m\} \subsetneq\{ 1, \cdots, g \}}  
		u_{\boldsymbol\mu}\frac{dt_{\mu_1}}{t_{\mu_1}} \wedge \cdots \wedge \frac{dt_{\mu_m}}{t_{\mu_m}}
	\]
	for $u_{\boldsymbol\mu} \in \Gamma(\bbA^g, \Sym^k \cH)$, the constant term of $u_{\boldsymbol\mu}$ is zero.  
	This shows that the class 
	\[
		[\xi_k]\in\bbH^m(\bbA^g, \Sym^k \cH\otimes\Omega^\bullet_{\bbA^g}(Y))
	\]
	is zero, 
	which implies that there exists an element $\eta_k \in \Gamma(\bbA^g,  \Sym^k \cH \otimes \Omega_{\bbA^g}^{m-1}(Y))$
	such that $(1 \otimes d)(\eta_k) = \xi_k$.  If we let 
	\[
		\wt\xi_{k+1}:= \xi_{k+1} - \sum_{\mu=1}^{g} \omega_\mu \cdot \eta_{k} \wedge \frac{dt_\mu}{t_\mu}
		\in\Gamma(\bbA^g,  \Sym^{k+1} \cH \otimes \Omega_{\bbA^g}^{m}(Y)),
	\]
	where $\omega_\mu$ acts on $\ul\omega^\bsm$ by $\omega_\mu\cdot\ul\omega^\bsm=\ul\omega^{\bsm+1_\mu}$,
	and $\wt\xi_{j} := \xi_j$ for $j= k+2, \ldots, N$, then the class of
	\[
		\wt\xi = \sum_{j= k+1}^N \wt \xi_j \in \Gamma( \bbA^g, \cLog^N \otimes \Omega^m_{\bbA^g}(Y))
	\]
	coincides with that of $\xi$.  By induction on $k$, we see that
	the class of $\xi$ coincides with the class of a certain element  $\xi_N  \in \Gamma( \bbA^g, \Sym^N \cH \otimes \Omega^m_{\bbA^g}(Y))$.  
	Since $\xi_N$ satisfies $(1 \otimes d)(\xi_N) = 0$, this element represents a class 
	\[
		[\xi_N] \in  H^m(\Gamma(\bbA^g, \Sym^{N}\cH \otimes\Omega^\bullet_{\bbA^g}(Y))) \cong H^m_\dR(X, \Sym^N \cH).
	\]
	By construction, $[\xi_N]$ maps to the class $[\xi]$ through the homomorphism \eqref{eq: surjection}.
	This gives our statement for the case $m\neq g$.	
	As a consequence,
	the exact sequence \eqref{eq: LES}
	gives an isomorphism
	\begin{equation}\label{eq: H0}
		H^0(X,  \Sym^{N} \bbR(\bone)) \xrightarrow\cong H^0(X, \bbLog^{N}),
	\end{equation}
	short exact sequences
	\begin{equation}\label{eq: Hm}
		0 \rightarrow H^{m-1}(X,  \bbLog^{N-1}) \rightarrow  H^{m}(X,  \Sym^{N} \bbR(\bone)) \rightarrow  H^{m}(X, \bbLog^{N})
		\rightarrow 0
	\end{equation}
	for $m=1, \ldots, g-1$,  and the exact sequence
	\begin{equation}\label{eq: aLES}
		0 \rightarrow H^{g-1}(X, \bbLog^{N-1}) \rightarrow H^g(X,  \Sym^N \bbR(\bone))
		\rightarrow H^g(X, \bbLog^N)
		\rightarrow H^g(X, \bbLog^{N-1}) \rightarrow 0.
	\end{equation}
	
	Next, we prove our assertion for the case $m=g$. 
	From \eqref{eq: aLES}, it is sufficient to prove that the 
	dimensions of 
	$H^{g-1}(X, \bbLog^{N-1})$ and $ H^g(X,  \Sym^N \bbR(\bone))$ coincide.
	Note that since $H^g(X,\bbR) =H^g(\bbG_{\mathrm{m}}^g,\bbR)\cong \bbR(-g)$, we have
	\[
		\dim_\bbR H^g(X,  \Sym^N \bbR(\bone)) = 
		\dim_\bbR  (\Sym^N \bbR(\bone))
		= \binom{N+g-1}{g-1}.
	\]
	We prove the equality
	\begin{equation}\label{eq: 9610}
		\dim_\bbR H^m(X, \bbLog^N) = \binom{N+g-1-m}{g-1-m} \binom{N+g}{m}
	\end{equation}
	for any integers $m=0,\ldots, g-1$ and $N \geq 0$.

	If $N=0$, then $\bbLog^0 $ is the constant $\bbR$-Hodge structure $\bbR(0)$ on $X$,
	hence
	\begin{equation}\label{eq: 321}
		H^m(X, \bbLog^0)  =  H^m(\bbG_{\mathrm{m}}^g,\bbR) = \bigwedge^m  H^1(\bbG_{\mathrm{m}}^g,\bbR) 
		\cong \bigwedge^m  \bbR(-1)^{\oplus g}  \cong \bbR(-m)^{\oplus\binom{g}{m}},
	\end{equation}
	which gives \eqref{eq: 9610} in this case.
	If $m=0$, then by \eqref{eq: H0}, we have
	\[
		\dim_\bbR H^0(X, \bbLog^{N}) = \dim_\bbR H^0(X,  \Sym^{N} \bbR(\bone)) 
		= \dim_\bbR \Sym^N \bbR(\bone) = \binom{N+g-1}{g-1},
	\]
	which gives \eqref{eq: 9610} in this case.
	Finally, suppose $m,N>0$ and \eqref{eq: 9610} is true for the pair $(m-1,N-1)$.  
	Then the exact sequence \eqref{eq: Hm} shows that we have
\begin{multline*}
	\dim_\bbR H^{m}(X, \bbLog^{N}) = \dim_\bbR H^{m}(X,  \Sym^{N} \bbR(\bone))  - \dim_\bbR H^{m-1}(X,  \bbLog^{N-1})  \\
	= \dim_\bbR \Biggl(\biggl(\bigwedge^m H^1(X) \biggr)\otimes 
	\Sym^{N} \bbR(\bone) \Biggr)  - \dim_\bbR H^{m-1}(X,  \bbLog^{N-1}) \\
	= \binom{g}{m} \binom{N+g-1}{g-1} - \binom{N+g-m-1}{g-m} \binom{N+g-1}{m-1} = \binom{N+g-m-1}{g-m-1} \binom{N+g}{m},
\end{multline*}
which proves \eqref{eq: 9610} for $(m,N)$ as desired.

The last statement of our lemma follows from \eqref{eq: 321} for the case $m=g$.
\end{proof}

%
%
\subsection{The Geometric Polylogarithm Class}
%
%

In this subsection, we give the definition of the geometric polylogarithm class of $X$.
We let $Z: = \{ 1\}\subset X$ and $U: = X \setminus Z$.  Then we have the following.

\begin{proposition}\label{prop: Log on U}
	For $g>1$, we have canonical isomorphisms
	\[
		H^{m}(U, \bbLog(g))\coloneqq \varprojlim_NH^{m}(U, \bbLog^N(g)) \cong 
		\begin{cases}
			\bbR(0)  & m=g,\\
			\prod_{k=0}^\infty \Sym^k \bbR(\bone)  &   m=2g-1,\\
			0  & \text{otherwise}.
		\end{cases}
	\]
	For $g=1$, we have canonical isomorphisms
	\[
		H^{m}(U, \bbLog(1)) \cong 
		\begin{cases}
			\bbR(0)\oplus\prod_{k=0}^\infty \bbR(k)  &   m=1,\\
			0  & \text{otherwise}.
		\end{cases}
	\]
\end{proposition}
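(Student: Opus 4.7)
The strategy is to apply the Gysin (localization) long exact sequence for the closed embedding $Z = \{1\} \hookrightarrow X$ with open complement $U$, combined with Proposition~\ref{prop: Log on X} and the splitting principle \eqref{eq: splitting}.

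First, the localization sequence in mixed $\bbR$-Hodge structures reads
\[
\cdots \to H^m_Z(X, \bbLog(g)) \to H^m(X, \bbLog(g)) \to H^m(U, \bbLog(g)) \to H^{m+1}_Z(X, \bbLog(g)) \to \cdots
\]
Since $Z$ is a smooth closed subvariety of $X$ of pure codimension $g$, the purity isomorphism gives $H^m_Z(X, \bbLog(g)) \cong H^{m-2g}(Z, i^*\bbLog)$, which vanishes for $m \neq 2g$ (as $Z$ is a point), while the splitting principle \eqref{eq: splitting} applied with $d=1$ and $\zeta = 1$ identifies $H^{2g}_Z(X, \bbLog(g)) \cong i^*\bbLog \cong \prod_{k=0}^\infty \Sym^k \bbR(\bone)$. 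Together with Proposition~\ref{prop: Log on X}, the only nontrivial input terms in the long exact sequence are $H^g(X, \bbLog(g)) = \bbR(0)$ and $H^{2g}_Z(X, \bbLog(g)) \cong \prod_{k=0}^\infty \Sym^k \bbR(\bone)$.

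For $g > 1$, the three indices $g$, $2g-1$, $2g$ are pairwise distinct, so the sequence decomposes and directly yields $H^g(U, \bbLog(g)) \cong \bbR(0)$, $H^{2g-1}(U, \bbLog(g)) \cong \prod_k \Sym^k \bbR(\bone)$, and zero in all other degrees. For $g = 1$, the degrees coincide at $1 = g = 2g-1$, and the sequence collapses to the short exact extension
\[
0 \to \bbR(0) \to H^1(U, \bbLog(1)) \to \prod_{k=0}^\infty \bbR(k) \to 0
\]
of mixed $\bbR$-Hodge structures (noting $\Sym^k \bbR(\bone) = \bbR(k)$ when $g=1$). To conclude, I would verify this splits: Carlson's formula gives $\Ext^1_{\MHS_\bbR}(\bbR(k), \bbR(0)) = 0$ for every $k \geq 0$, since $\Hom(\bbR(k), \bbR(0)) \cong \bbR(-k)$ has nonnegative weight and hence contributes no $W_0$-piece to $\Ext^1$. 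At each finite stage $\bbLog^N$ the splitting is canonical and compatible with the projection $\bbLog^N \to \bbLog^{N-1}$, yielding a canonical splitting in the limit.

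The main obstacle will be rigorously establishing the Gysin/purity isomorphism in $\MHS_\bbR$ with coefficients in the pro-admissible unipotent variation $\bbLog$, and verifying compatibility with the inverse limit $\bbLog = \varprojlim_N \bbLog^N$. A secondary subtlety, in the case $g=1$, is ensuring that Carlson's splittings at finite levels assemble into a splitting of the pro-system; this reduces to canonicality of the splittings and should be essentially automatic.
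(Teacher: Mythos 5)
Your argument is correct and follows the same backbone as the paper's: the Gysin/localization sequence for $Z=\{1\}\hookrightarrow X$ (which is exactly the sequence \eqref{eq: Gyzin} used in Lemma \ref{lemma: Log on U}), the identification $H^{m+1-2g}(Z,i^*\bbLog^N)$ via the splitting principle, the vanishing of $H^m(X,\bbLog^N(g))$ for $m>g$, and Proposition \ref{prop: Log on X} to kill the remaining terms in the limit (the Mittag--Leffler issue you flag is harmless since all groups at finite level are finite-dimensional). The one genuine divergence is the case $g=1$. The paper splits the sequence $0\to H^1(X,\bbLog^N(1))\to H^1(U,\bbLog^N(1))\to H^0(Z,i^*\bbLog^N)\to 0$ by an explicit retraction built from the isomorphism $H^1(X,\bbLog^N(1))\cong H^1(X,\bbLog^0(1))$ of Lemma \ref{lemma: Log on X} composed with the residue $\res_0$, which is canonical and automatically compatible in $N$. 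You instead invoke $\Ext^1_{\MHS_\bbR}(\bbR(k),\bbR(0))=0$ for $k\geq 0$; this does prove the extension splits, but your claim that the splitting is canonical is not quite right: the splittings form a torsor under $\Hom_{\MHS_\bbR}\bigl(\prod_k\bbR(k),\bbR(0)\bigr)=\Hom_{\MHS_\bbR}(\bbR(0),\bbR(0))=\bbR$, coming from the $k=0$ factor, so there is a one-dimensional ambiguity (though it can be chosen compatibly with the transition maps, since the projections are surjective on that factor). If you want the \emph{canonical} isomorphism asserted in the statement, you should pin the splitting down as the paper does, e.g.\ by the residue along $D$; the abstract Ext-vanishing alone only gives the isomorphism class.
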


Proposition \ref{prop: Log on U} follows immediately from
the following Lemma \ref{lemma: Log on U}
concerning the cohomology of $U$ with coefficients in $\bbLog^N$
and Lemma \ref{lemma: Log on X}.

\begin{lemma}\label{lemma: Log on U}
	For $g>1$, we have canonical isomorphisms
	\[
		H^{m}(U, \bbLog^N(g)) \cong 
		\begin{cases}
			H^m(X,\bbLog^N(g))  &   m=0,\ldots,g,\\
			\prod_{k=0}^N \Sym^k\bbR(\bone)& m=2g-1,\\
			0  & \text{otherwise}.
		\end{cases}
	\]
	For $g=1$, we have canonical isomorphisms
	\[
		H^{m}(U, \bbLog^N(1)) \cong 
		\begin{cases}
			\bbR(0)\oplus\prod_{k=0}^N \bbR(k)  &   m=1,\\
			0  & \text{otherwise}.
		\end{cases}
	\]
\end{lemma}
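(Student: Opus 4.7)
The plan is to deduce the cohomology of $U$ from that of $X$ via the Gysin localization long exact sequence for the closed embedding $i\colon Z=\{1\}\hookrightarrow X$ with open complement $j\colon U\hookrightarrow X$, combining Lemma \ref{lemma: Log on X} with the splitting principle \eqref{eq: splitting}. Since $Z$ is a smooth closed subvariety of $X$ of complex codimension $g$, Gysin purity (as a morphism in $\MHS_\bbR$) gives
\[
	H^m_Z(X,\bbLog^N(g))\cong H^{m-2g}(Z,i^*\bbLog^N),
\]
which vanishes unless $m=2g$; at $m=2g$ it equals $i^*\bbLog^N\cong\prod_{k=0}^N\Sym^k\bbR(\bone)$ by \eqref{eq: splitting}.

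Because the local cohomology is concentrated in a single degree, the localization long exact sequence immediately yields $H^m(U,\bbLog^N(g))\cong H^m(X,\bbLog^N(g))$ for $m<2g-1$. Combining this with Lemma \ref{lemma: Log on X} and the vanishing $H^m(X,\bbLog^N)=0$ for $m>g$ (which holds because $X$ is smooth affine of complex dimension $g$, and is already used tacitly in the proof of Lemma \ref{lemma: Log on X}) settles the claim in the range $0\leq m\leq g$ and supplies the vanishing in the range $g<m<2g-1$. In the top range $m>2g$ the cohomology of $U$ vanishes because $U$ is a complex manifold of real dimension $2g$, and $H^{2g}(U,\bbLog^N(g))=0$ is forced by exactness once the adjacent $H^{2g}(X,\bbLog^N(g))$ is known to vanish.

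The only nontrivial degree is $m=2g-1$. For $g>1$, the vanishings of $H^{2g-1}(X,\bbLog^N(g))$ and $H^{2g}(X,\bbLog^N(g))$ collapse the localization sequence to the claimed isomorphism $H^{2g-1}(U,\bbLog^N(g))\cong\prod_{k=0}^N\Sym^k\bbR(\bone)$. For $g=1$, the relevant piece reduces to a short exact sequence
\[
	0\to\bbR(0)\to H^1(U,\bbLog^N(1))\to\prod_{k=0}^N\bbR(k)\to 0,
\]
where $H^1(X,\bbLog^N(1))\cong\bbR(0)$ by Lemma \ref{lemma: Log on X} and $\Sym^k\bbR(\bone)=\bbR(k)$ since $g=1$.

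Upgrading this last sequence to the stated direct sum decomposition reduces to the vanishing $\Ext^1_{\MHS_\bbR}(\bbR(k),\bbR(0))=0$ for every $k\geq 0$, equivalently $\Ext^1_{\MHS_\bbR}(\bbR(0),\bbR(-k))=0$, which follows from the standard formula $\Ext^1_{\MHS_\bbR}(\bbR(0),A)\cong A_\bbC/(F^0A_\bbC+A_\bbR)$ together with the identity $F^0\bbR(-k)_\bbC=\bbR(-k)_\bbC$ for $k\geq 0$. The main obstacle I anticipate is just the bookkeeping of citing Gysin purity as a morphism of mixed $\bbR$-Hodge structures in this setting; once it is in hand, the computation is essentially mechanical.
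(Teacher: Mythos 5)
Your proposal follows the same route as the paper: the localization (Gysin) long exact sequence for $Z=\{1\}\hookrightarrow X$, purity identifying the local cohomology with $H^{m-2g}(Z,i^*\bbLog^N)$ concentrated in degree $2g$, the splitting principle \eqref{eq: splitting} to evaluate $H^0(Z,i^*\bbLog^N)$, and the vanishing $H^m(X,\bbLog^N(g))=0$ for $m>g$ from Lemma \ref{lemma: Log on X}. All of that matches, and the $g>1$ case is handled identically. The one genuine divergence is the $g=1$ splitting of
\[
	0\to\bbR(0)\to H^1(U,\bbLog^N(1))\to\prod_{k=0}^N\bbR(k)\to 0.
\]
You split it abstractly via $\Ext^1_{\MHS_\bbR}(\bbR(k),\bbR(0))=0$ for $k\geq 0$, which is correct as far as existence of a splitting goes; the paper instead produces a \emph{natural} splitting $H^1(U,\bbLog^N(1))\to H^1(X,\bbLog^N(1))$ by composing the projection to $\bbLog^0$ with the residue $\res_0$ along the boundary divisor. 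The difference matters for the word ``canonical'' in the statement: since the quotient contains a $k=0$ summand and $\Hom_{\MHS_\bbR}(\bbR(0),\bbR(0))=\bbR\neq 0$, your Ext argument leaves the splitting non-unique on that summand, so you obtain the isomorphism type but not a distinguished isomorphism. The paper needs the distinguished one later (the geometric polylogarithm class for $g=1$ is pinned down by requiring $\res_0$ to kill it), so if you take your route you should still record the $\res_0$-splitting, or otherwise specify which splitting you mean. Everything else in your argument is sound.
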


\begin{proof}
	If we denote by $i: Z \hookrightarrow X$ the natural inclusion.
	then for any integer $N\geq 0$, we have a long exact sequence
	of mixed $\bbR$-Hodge structures
	\begin{equation}\label{eq: Gyzin}
		\cdots\rightarrow H^m(X, \bbLog^N(g)) \rightarrow  H^m(U, \bbLog^N(g)) \xrightarrow{\res_1}  H^{m+1-2g}(Z, i^*\bbLog^N) \rightarrow \cdots.
	\end{equation}

	Since $i^*\bbLog^N= \prod_{k=0}^N\Sym^k \bbR(\bone)$, we have
	\[
		H^m(Z, i^*\bbLog^N)\cong 
		\begin{cases}
		\prod_{k=0}^N\Sym^k \bbR(\bone)	&  m=0,\\
		0  & m\neq 0.
		\end{cases}
	\]
	Since the $m$-th cohomology of $\bbLog^N(g)$ on $X$ is zero for $m>g$,
	the long exact sequence \eqref{eq: Gyzin} gives the desired result.
	For the case $g=1$, we have the short exact sequence
	\[
		0 \rightarrow H^1(X, \bbLog^N(1)) \rightarrow H^1(U,\bbLog^N(1)) \rightarrow H^0(Z, i^* \bbLog^N) \rightarrow 0.
	\]
	By the diagram
	\[\xymatrix{
		H^1(X, \bbLog^N(1)) \ar[r]\ar[d]^\cong& H^1(U,\bbLog^N(1))\ar[d]\\
		H^1(X, \bbLog^0(1)) \ar[r]\ar[rd]^{\res_0}_\cong& H^1(U,\bbLog^0(1))\ar[d]^{\res_0}\\
		&  \bbR(0),
	}
	\]
	where the left vertical isomorphism follows from Lemma \ref{lemma: Log on X},
	we see that there exists a natural splitting $H^1(U,\bbLog^N(1)) \rightarrow H^1(X,\bbLog^N(1))$ of the first injection.
	Our assertion follows from Proposition \ref{prop: Log on X}.
\end{proof}

\begin{definition}\label{def: geometric}
	We define the \emph{geometric polylogarithm class} to be the class
	\[
		\bsxi = (\bsxi_N)\in H^{2g-1}(U,  \bbLog(g))
	\]
	which maps to the basis $\ul u^{\boldsymbol 0}$ of $\Sym^0 \bbR(\bone)$ through the isomorphism
	 of Proposition \ref{prop: Log on U}.
\end{definition}

\begin{remark}\label{rem: geometric}
	The geometric polylogarithm class $\bsxi=(\bsxi_N)$ satisfies
	\[
		\bsxi_N \in W_0H^{2g-1}(U,\bbLog^N(g))\cap (F^0\cap W_0) H^{2g-1}(U,\bbLog^N_\bbC(g)),
	\]
	since the residue map is a morphism of mixed $\bbR$-Hodge structures.
	Hence we may view this class as an element in 
	\[
		\Hom_{\MHS_\bbR}(\bbR(0), H^{2g-1}(U,\bbLog(g)))\coloneqq
		\varprojlim_N\Bigl(\Hom_{\MHS_\bbR}(\bbR(0), H^{2g-1}(U,\bbLog^N(g)))\Bigr).
	\]
\end{remark}

%
%
\subsection{Explicit Construction of the Geometric Polylogarithm}
%
%

In this subsection, we give an explicit construction of the geometric polylogarithm class.
We first define certain complexes to explicitly calculate the mixed $\bbR$-Hodge structure on the 
cohomology of $U:= X \setminus Z$ for $Z = \{1\} \subset X$, 
and we give an explicit description of the residue map $\res_1$ of \eqref{eq: Gyzin}. 
We let $[g]:=\{1,\ldots,g\}$, and for any $\mu \in [g]$, we let $Z_\mu$ be the divisor 
\[
	Z_\mu := \bbP^1 \times \cdots \times\bbP^1\times\{1\}\times\bbP^1\times\cdots\times\bbP^1 \subset \ol X
\]
with $\{1\}$ in the $\mu$-th component and $\bbP^1$ in the other components.  For any $\bsmu \subset [g]$,
we let $D^\bsmu := D \cup \bigcup_{\mu\in\bsmu} Z_\mu$. 
In what follows, let $N$ be an integer $\geq 0$.

\begin{definition}
	We define the \v Cech-de Rham complex $\cLog^N(D)\otimes\Omega^\bullet_{\ol X}(D^\bullet)$ on $\ol X$ to be the simple complex associated to the double complex
	\[
		\bigoplus_{\abs{\bsmu}=1} \cLog^N(D) \otimes \Omega^\bullet_{\ol X}(D^\bsmu) \xrightarrow{\partial}\cdots\xrightarrow{\partial}\bigoplus_{\abs{\bsmu}=g} \cLog^N(D) \otimes \Omega^\bullet_{\ol X}(D^\bsmu),
	\]
	with $\bigoplus_{\abs{\bsmu}=1} \cLog^N(D) \otimes \Omega^0_{\ol X}(D^\bsmu)$ in degree $0$,
	where the horizontal differentials are the standard alternating sum $(\partial u)_{\mu_1\cdots\mu_h} = \sum_{\nu=1}^h (-1)^\nu u_{\mu_1\cdots\check\mu_\nu\cdots\mu_h}$.
\end{definition}

\begin{proposition}\label{prop: open complex}
	Denote by $j: U \hookrightarrow \ol X$ be the natural inclusion, and consider the $\bbC$-local system $\bbLog^N_\bbC:=\bbLog^N\otimes\bbC$ on $U$.
	Then we have canonical isomorphisms
	\[
		 R j_* \bbLog^N_\bbC \cong R j_*(\cLog^N \otimes \Omega^\bullet_U)\cong\cLog^N(D)\otimes\Omega^\bullet_{\ol X}(D^{\bullet})
	\]
	in the derived category of abelian sheaves on $\ol X$.  In particular, by taking the functor $R\Gamma(\ol X,-)$, we have
	canonical isomorphisms
	\begin{equation}\label{eq: isomorphisms}
	 	H^m(U, \bbLog^N)\otimes\bbC = H^m(U, \bbLog^N_\bbC) \cong H^m_\dR(U, \cLog^N)\cong\bbH^m(\ol X,  \cLog^N(D)\otimes\Omega^\bullet_{\ol X}(D^{\bullet})).
	\end{equation}
\end{proposition}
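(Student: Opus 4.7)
The plan is to chain together three standard ingredients: the holomorphic Poincaré lemma with coefficients in a flat connection, a Mayer--Vietoris (\v Cech) resolution for the open cover of $U$ by the complements of the individual divisors $Z_\mu$, and Deligne's comparison between the analytic and logarithmic de Rham complexes (Proposition \ref{prop: Deligne} together with its coefficient version \cite{Del70}*{Th\'eor\`eme 6.2}).

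For the first isomorphism, $\bbLog^N_\bbC$ was constructed as the $\bbC$-local system of horizontal sections of $(\cLog^N,\nabla)$, so the holomorphic Poincaré lemma gives a quasi-isomorphism $\bbLog^N_\bbC \hookrightarrow \cLog^N\otimes\Omega^\bullet_U$ of complexes of sheaves on $U$, and applying $Rj_*$ yields $Rj_*\bbLog^N_\bbC\cong Rj_*(\cLog^N\otimes\Omega^\bullet_U)$. For the second isomorphism, the crucial observation is that the point $1\in X$ is cut out by $t_1=\cdots=t_g=1$, so $Z = Z_1\cap\cdots\cap Z_g$ and consequently the opens $U_\mu := X\setminus Z_\mu$ form an open cover of $U$ with $U_\bsmu := \bigcap_{\mu\in\bsmu} U_\mu = \ol X\setminus D^\bsmu$ for every nonempty $\bsmu\subset[g]$. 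Writing $j_\bsmu\colon U_\bsmu\hookrightarrow \ol X$ for the inclusion, the iterated Mayer--Vietoris (equivalently, the \v Cech spectral sequence) for this cover gives a quasi-isomorphism
\[
Rj_*(\cLog^N\otimes\Omega^\bullet_U)\simeq \Tot\Bigl(\bigoplus_{|\bsmu|=\bullet+1} R(j_\bsmu)_*(\cLog^N\otimes\Omega^\bullet_{U_\bsmu})\Bigr)
\]
in the derived category of sheaves on $\ol X$. Since $\cLog^N$ extends holomorphically across each $Z_\mu$ with no new monodromy, the canonical extension of $\cLog^N|_{U_\bsmu}$ with logarithmic singularities along the simple normal crossing divisor $D^\bsmu$ agrees with $\cLog^N(D)$. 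Proposition \ref{prop: Deligne} applied to each $(U_\bsmu,D^\bsmu)$ then supplies term-by-term quasi-isomorphisms
\[
R(j_\bsmu)_*(\cLog^N\otimes\Omega^\bullet_{U_\bsmu}) \simeq \cLog^N(D)\otimes\Omega^\bullet_{\ol X}(D^\bsmu),
\]
and these are compatible with the restriction maps, so they assemble into a quasi-isomorphism of the total complex with $\cLog^N(D)\otimes\Omega^\bullet_{\ol X}(D^\bullet)$. The cohomology identifications \eqref{eq: isomorphisms} then follow by applying $R\Gamma(\ol X,-)$ and recalling that $H^m(U,\bbLog^N)\otimes\bbC = H^m(U,\bbLog^N_\bbC)$ together with the holomorphic de Rham identification $H^m(U,\bbLog^N_\bbC) \cong H^m_\dR(U,\cLog^N)$.

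The main obstacle is the \v Cech--resolution step: one must verify that the \v Cech-type complex genuinely represents $Rj_*$ as a complex of sheaves on $\ol X$, not merely after restriction to $U$. Concretely, for each $x\in D\cup Z$ the stalk complex has to be acyclic, which reduces to checking contractibility of the nerve of the induced cover on a small punctured neighbourhood of $x$; the worst strata to inspect are those of $D\cap Z_\bsmu$, where one must combine the contractibility of simplicial nerves with the fact that the log de Rham complex $\Omega^\bullet_{\ol X}(D^\bsmu)$ correctly encodes the stalks of $R(j_\bsmu)_*$ along $D$. Once this local verification is carried out, the remainder of the argument is a formal assembly of the three ingredients already identified.
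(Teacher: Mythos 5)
Your argument is correct and follows essentially the same route as the paper: the holomorphic de Rham resolution of $\bbLog^N_\bbC$ by $\cLog^N\otimes\Omega^\bullet_U$, the \v Cech resolution attached to the cover $\{U_\mu\}$ of $U$ with $U_\bsmu=\ol X\setminus D^\bsmu$, and Deligne's logarithmic comparison (the coefficient version from \cite{Del70}) applied term by term to each $(U_\bsmu,D^\bsmu)$. The local stalk verification you flag as the main obstacle is dispatched in the paper simply by observing that each $j_\bsmu\colon U_\bsmu\hookrightarrow\ol X$ is an affine open immersion, so the underived pushforwards $j_{\bsmu*}(\cLog^N\otimes\Omega^\bullet_{U_\bsmu})$ already compute $Rj_{\bsmu*}$ and the \v Cech complex represents $Rj_*$ on all of $\ol X$.
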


\begin{proof}
	Since $\cLog^N = \bbLog^N_\bbC \otimes \cO_X$, the complex $\cLog^N\otimes\Omega^\bullet_U$ on $U$ gives a resolution
	\begin{equation}\label{eq: resolution}
		0 \rightarrow \bbLog^N_\bbC \rightarrow \cLog^N \otimes \cO_U \xrightarrow\nabla \cLog^N \otimes \Omega^1_U \rightarrow \cdots
	\end{equation}
	of $\bbLog^N_\bbC$, which proves the first isomorphism of our assertion.
	For any $\mu\in[g]$, we let $U_\mu: = X \setminus Z_\mu = \ol X \setminus (D\cup Z_\mu)$.
	Then $\{U_\mu\}$ is an open covering of $U$.  For any $\bsmu\subset[g]$, we let 
	$U_\bsmu:= \bigcap_{\mu\in\bsmu} U_{\mu}  = \ol X \setminus D^\bsmu$
	and we denote by $j^\circ_\bsmu: U_\bsmu \hookrightarrow U$ and $j_\bsmu := j \circ j^\circ_\bsmu: U_\bsmu\hookrightarrow\ol X$  the natural inclusions.
	If we let $j^\circ_{\bullet*} (\cLog^N \otimes \Omega^\bullet_{U_\bullet})$  be the \v Cech complex  defined as the simple complex associated to the double complex
	\[
		 \prod_{|\bsmu|=1} j^\circ_{\bsmu*} (\cLog^N \otimes \Omega^\bullet_{U_\bsmu})
		\rightarrow \cdots\rightarrow \prod_{|\bsmu|=g} j^\circ_{\bsmu*} (\cLog^N \otimes \Omega^\bullet_{U_\bsmu}),
	\]
	then we have a quasi-isomorphism
	\[
		\cLog^N \otimes \Omega^\bullet_{U} \xrightarrow\cong j^\circ_{\bullet*} (\cLog^N \otimes \Omega^\bullet_{U_\bullet}).
	\]
	This quasi-isomorphism combined with \eqref{eq: resolution} gives the isomorphism
	\begin{equation}\label{eq: first resolution}
		Rj_* \bbLog^N_\bbC \xrightarrow\cong Rj_* j^\circ_{\bullet*} (\cLog^N \otimes \Omega^\bullet_{U_\bullet})
		\cong j_{\bullet*} (\cLog^N \otimes \Omega^\bullet_{U_\bullet})
	\end{equation}
	in the derived category, where  we let 
	$j_{\bullet*} (\cLog^N \otimes \Omega^\bullet_{U_\bullet}) :=  j_* j^\circ_{\bullet*} (\cLog^N \otimes \Omega^\bullet_{U_\bullet})$.
	The second isomorphism of \eqref{eq: first resolution} follows from the fact that $j_\bsmu$ is an affine morphism for any $\bsmu\subset [g]$.
	Note that since $\cLog^N(D)$ is the canonical extension of $\cLog^N$, we have quasi-isomorphisms
	\[
		\cLog^N(D)\otimes\Omega^\bullet_{\ol X}(D^\bsmu)	
		\xrightarrow\cong
		j_{\bsmu*} (\cLog^N \otimes \Omega^\bullet_{U_\bsmu}) 
	\]
	for any $\bsmu\subset [g]$, which induce the quasi-isomorphism 
	\begin{equation}\label{eq: second resolution}
		\cLog^N(D)\otimes\Omega^\bullet_{\ol X}(D^\bullet) \xrightarrow\cong j_{\bullet*} (\cLog^N \otimes \Omega^\bullet_{U_\bullet}).
	\end{equation}
	Our assertion follows by combining the quasi-isomorphisms \eqref{eq: first resolution} and \eqref{eq: second resolution}.
\end{proof}

We next define the filtrations $W_\bullet$ and $F^\bullet$ on $\cLog^N(D)\otimes\Omega^\bullet_{\ol X}(D^\bullet)$ which gives the weight and Hodge filtrations of 
the mixed $\bbR$-Hodge structure $H^m(U, \bbLog^N)$.
For any integer $n\in\bbZ$, we let $W_n(\cLog^N(D)\otimes\Omega^\bullet_{\ol X}(D^{\bullet}))$ be the subcomplex of $\cLog^N(D)\otimes\Omega^\bullet_{\ol X}(D^{\bullet})$
defined as the simple complex associated to the double complex
\[
	\bigoplus_{|\bsmu|=1} W_n(\cLog^N(D)\otimes\Omega^\bullet_{\ol X}(D^{\bsmu})) \rightarrow 
	 \cdots \rightarrow  \bigoplus_{|\bsmu|=g} W_{n+g-1}(\cLog^N(D)\otimes\Omega^\bullet_{\ol X}(D^{\bsmu})),
\]
and for any integer $p\in\bbZ$, we let $F^p(\cLog^N(D)\otimes\Omega^\bullet_{\ol X}(D^{\bullet}))$ be the subcomplex of $\cLog^N(D)\otimes\Omega^\bullet_{\ol X}(D^{\bullet})$
defined as the simple complex associated to the double complex
\[
	\bigoplus_{|\bsmu|=1} F^p(\cLog^N(D)\otimes\Omega^\bullet_{\ol X}(D^{\bsmu})) \rightarrow \cdots \rightarrow 
	 \bigoplus_{|\bsmu|=g} F^p(\cLog^N(D)\otimes\Omega^\bullet_{\ol X}(D^{\bsmu})).
\]
Then the filtrations $\wt W_\bullet$ and $F^\bullet$ induced on $\bbH^m(\ol X, \cLog^N(D)\otimes\Omega^\bullet_{\ol X}(D^\bullet))$ from the filtrations $W_\bullet$ and $F^\bullet$ 
on  $\cLog^N(D)\otimes\Omega^\bullet_{\ol X}(D^\bullet)$ coincide through the isomorphism \eqref{eq: isomorphisms} with the weight and Hodge filtrations 
on the mixed $\bbR$-Hodge structure $H^m(U, \bbLog^N)$ of Proposition \ref{prop: MHS}.

We now describe the residue map $\res_1$ of \eqref{eq: Gyzin}.
Consider $\bbLog^N(g)= \bbLog^N \otimes \bbR(g)$ and $\cLog^N(D)(g)= \cLog^N(D) \otimes \bbC(g)$, and let $\ul\omega^\bsk(g):= \ul\omega^\bsk\otimes\ul\omega^g$.
Note that $Z = Z_1 \cap\cdots\cap Z_g$ and we let $i: Z \hookrightarrow \ol X$ be the natural inclusion.  Consider the map
\[
	\cLog^N(D)(g) \otimes \Omega^g_{\ol X}(D^{[g]}) \rightarrow i_* \bigl( i^* \cLog^N(D) \otimes  \Omega^0_Z\bigr)
\]
given locally in a neighborhood of $Z$ by
\[
	f \otimes \ul\omega^g \otimes \frac{dt_1}{t_1-1} \wedge \cdots \wedge \frac{dt_g}{t_g-1} \mapsto f|_Z \otimes 1,
\]
where $t_1,\ldots,t_g$ is the standard parameter on $\ol X=\bbP^1\times\cdots\times\bbP^1$
so that the divisor $Z_\mu$ is defined by $t_\mu=1$.
This map induces a map of complexes
\[
	 \cLog^N(D)(g) \otimes \Omega^\bullet_{\ol X}(D^\bullet) \rightarrow  i_* \bigl( i^* \cLog^N(D)  \otimes \Omega^\bullet_Z\bigr)[-2g+1],
\]
which induces a map on cohomology
\[
	 \bbH^{2g-1}(\ol X,  \cLog^N(D)(g) \otimes \Omega^\bullet_{\ol X}(D^\bullet)) \rightarrow \bbH^0(Z,  i^* \cLog^N(D)  \otimes \Omega^\bullet_Z).
\]
Then through the isomorphism of Proposition \ref{prop: open complex}, we obtain the map
\[
	\res_1: H^{2g-1}(U, \bbLog^N_\bbC(g)) \rightarrow H^0(Z, i^* \bbLog^N_\bbC).
\]
The map coincides with the residue map $\res_1$ of \eqref{eq: Gyzin}.

Note that the inclusion induces a natural morphism of complexes
\[
	\cLog^N(D)(g) \otimes \Omega^g_{\ol X}(D^{[g]})[-2g+1] \rightarrow \cLog^N(D)(g) \otimes \Omega^\bullet_{\ol X}(D^\bullet),
\]
which induces a map
\[
	\Gamma(\ol X, \cLog^N(D)(g) \otimes \Omega^g_{\ol X}(D^{[g]})) \rightarrow  \bbH^{2g-1}(\ol X, \cLog^N(D)(g) \otimes \Omega^\bullet_{\ol X}(D^\bullet)).
\]

\begin{proposition}
	The sections
	\begin{equation}\label{eq: differential form}
		\xi_N\coloneqq\ul \omega^{\boldsymbol 0}(g) \otimes \frac{dt_1}{t_1-1}\wedge\cdots\wedge \frac{dt_g}{t_g-1} \in \Gamma(\ol X, \cLog^N(D)(g) \otimes \Omega^g_{\ol X}(D^{[g]}))
	\end{equation}
	for integers $N\geq 0$
	define classes 
	\[	[\xi_N] \in
	\bbH^{2g-1}(\ol X, \cLog^N(D)(g) \otimes \Omega^\bullet_{\ol X}(D^\bullet))\cong H^{2g-1}(U, \bbLog^N_\bbC)
	\]
	which are mapped by $\res_1$ to the elements
	$\ul\omega^{\boldsymbol 0} = \ul u^{\boldsymbol 0}$ of $H^0(Z, i^* \bbLog^N) = \prod_{k=0}^N \Sym^k \bbR(\bone)$,
	and are mapped to zero by $\res_0$ in the case $g=1$.
	In other words, $\bsxi\coloneqq([\xi_N])_N$ is the geometric polylogarithm class.
\end{proposition}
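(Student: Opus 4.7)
The plan is to verify three items in sequence. First, that each $\xi_N$ is closed in the total complex, so that it defines a class $[\xi_N]\in\bbH^{2g-1}(\ol X,\cLog^N(D)(g)\otimes\Omega^\bullet_{\ol X}(D^\bullet))$. Second, that $\res_1[\xi_N]$ equals $\ul u^{\bszero}\in H^0(Z,i^*\bbLog^N)$. Third, in the case $g=1$, that the auxiliary residue $\res_0$ annihilates $[\xi_N]$. Combined with the characterization in Definition \ref{def: geometric}, these items identify $([\xi_N])_N$ with the geometric polylogarithm class $\bsxi$.

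For the cocycle property, I note that $\xi_N$ sits in the rightmost \v Cech summand, indexed by the unique subset $[g]\subseteq[g]$, and in form-degree $g$ on the $g$-dimensional manifold $\ol X$. The \v Cech differential $\partial$ would land in the summand indexed by subsets of size $g+1$, which is absent from the complex. For the connection differential, a direct computation using \eqref{eq: connection} together with $d(dt_\mu/(t_\mu-1))=0$ yields
\[
\nabla\xi_N=\sum_{\mu=1}^{g}\ul\omega^{1_\mu}(g)\otimes\frac{dt_\mu}{t_\mu}\wedge\frac{dt_1}{t_1-1}\wedge\cdots\wedge\frac{dt_g}{t_g-1},
\]
whose summands are $(g+1)$-forms on the $g$-dimensional $\ol X$, hence vanish for dimension reasons.

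Next, I compute $\res_1[\xi_N]$ by substituting $\xi_N$ into the explicit formula given just before the statement: the section is of the required shape with $f=\ul\omega^{\bszero}$, so $\res_1[\xi_N]=\ul\omega^{\bszero}|_Z\otimes 1$. Under the splitting isomorphism \eqref{eq: splitting} at $\zeta=1$, the transformation \eqref{eq: transformation} reduces to the identity because each $\log t_\mu$ vanishes at $t_\mu=1$, so $\ul u^{\bszero}|_{t=1}=\ul\omega^{\bszero}|_{t=1}$, yielding $\res_1[\xi_N]=\ul u^{\bszero}$. For the case $g=1$, the diagram in the proof of Lemma \ref{lemma: Log on U} shows that the auxiliary $\res_0$ factors through the projection $\bbLog^N\to\bbLog^0$, which kills $\ul\omega^\bsk$ for $|\bsk|>0$ and sends $\ul\omega^{\bszero}$ to the generator; the image of $\xi_N$ is $dt/(t-1)$, a $1$-form holomorphic at $t=0$, whose residue at $0$ vanishes. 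The main obstacle is the bookkeeping between the $\cO$-basis $\ul\omega^\bsk$ used to write $\xi_N$ and the $\bbR$-local-system basis $\ul u^\bsk$ appearing in the description of $\res_1$, but this is resolved cleanly by the coincidence $\ul u^{\bszero}=\ul\omega^{\bszero}$ at $t=1$.
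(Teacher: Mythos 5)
Your proposal is correct and follows essentially the same route as the paper, which in fact omits a formal proof and treats the proposition as immediate from the explicit description of $\res_1$ given just before it: you supply exactly the details left implicit there (closedness of $\xi_N$ for degree reasons in both the \v Cech and form directions, substitution of $f=\ul\omega^{\bszero}$ into the residue formula, the identity $\ul u^{\bszero}=\ul\omega^{\bszero}$ at $t=1$, and the vanishing of $\res_0$ for $g=1$ since $dt/(t-1)$ is holomorphic at $t=0$). No gaps.
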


The fact that the residue maps preserve the real structure of mixed $\bbR$-Hodge structures
implies that
$[\xi_N]$ is a class in $H^{2g-1}(U, \bbLog^N(g)) \subset H^{2g-1}(U, \bbLog^N_\bbC(g))$.

%
%
%
%
\section{Logarithmic Dolbeault Complex}\label{section: log Dolbeault}
%
%
%
%

In this section, we will use the theory of logarithmic Dolbeault complex defined by Burgos to construct an explicit complex
 calculating the mixed $\bbR$-Hodge structure on the cohomology of $U$
with coefficients in $\bbLog$.
For a general complex manifold $X$, we denote by $\sA_{X,\bbR}$ (resp.  $\sE_{X,\bbR}$) the sheaf of real-valued real analytic (resp. $C^\infty$) functions on $X$, 
and by $\sA^\bullet_{X,\bbR}$ (resp. $\sE^\bullet_{X,\bbR}$) the corresponding complex of sheaves of differential forms.

%
%
\subsection{Review of Logarithmic Dolbeault Complex}
%
%

We first review the theory of logarithmic Dolbeault complex defined by Burgos.
In what follows, let $X$ be a smooth algebraic variety over $\bbC$.   
The $C^\infty$-de Rham complex gives a resolution
\begin{equation}\label{eq: infinity resolution}
	0  \rightarrow \bbR \rightarrow \sE_{X,\bbR} \rightarrow  \sE^1_{X,\bbR} \rightarrow \sE^2_{X,\bbR}  \rightarrow \cdots
\end{equation}
of the constant sheaf $\bbR$ on $X$ in terms of the fine sheaves $\sE^\bullet_{X,\bbR}$.  Burgos defined a logarithmic version of $\sE^\bullet_{X,\bbR}$ on $\ol X$
amenable for defining the weight and Hodge filtrations.
We first define the complex $\sA^\bullet_{\ol X,\bbR}(D)$ to be the $\sA_{\ol X,\bbR}$-subalgebra of $j_*\sA^\bullet_{X,\bbR}$ generated locally on coordinate neighborhoods
adapted to $D$ by  $\sA^\bullet_{\ol X,\bbR}$ and the sections
\begin{align}\label{eq: weight one}
	&\log|t_\mu|, &  &\Re \frac{dt_\mu}{t_\mu},  & & \Im\frac{dt_\mu}{t_\mu}
\end{align}
for $\mu\in \bsmu$ and $\Re\,dt_\mu$, $\Im\,dt_\mu$ for $\mu\not\in \bsmu$, where the set $\bsmu := \{\mu_1,\ldots, \mu_h\} \subset [g]$ is such that
$D$ is defined locally by $t_{\mu_1} \cdots t_{\mu_h} = 0$.
We define the weight filtration $W_\bullet$ on $\sA^\bullet_{\ol X,\bbR}$
to be the multiplicative ascending filtration obtained by assigning weight $0$ to the sections of $\sA_{\ol X,\bbR}$ and weight $1$
to the local sections given in \eqref{eq: weight one}.

Following Burgos \cite{Bur94}*{\S 2}, we define the complex $\sE^\bullet_{\ol X,\bbR}(D)$ to be the image of the natural map
\[
	 \sA^\bullet_{\ol X,\bbR}(D) \otimes_{\sA_{\ol X,\bbR}} \sE_{\ol X,\bbR} \rightarrow j_*  \sE^\bullet_{X,\bbR}.
\]
We let $\sE_{\ol X,\bbR}(D):= \sE^0_{\ol X,\bbR}(D)$.
The complex  $\sE^\bullet_{\ol X,\bbR}(D)$ has a weight filtration $W_\bullet$ induced from the weight filtration $W_\bullet$ on $\sA^\bullet_{\ol X}(D)$.
The complex $\sE^\bullet_{X}:=\sE^\bullet_{X,\bbR}\otimes\bbC$ has a bigrading 
\[
	\sE^m_{X}=\bigoplus_{p,q\in\bbZ, p+q=m} \sE^{p,q}_{X}
\]
induced from the complex structure of $X$, and this bigrading induces a bigrading 
\begin{align*}
	\sE^m_{\ol X}(D) &= \bigoplus_{p,q\in\bbZ, p+q=m} \sE^{p,q}_{\ol X}(D), &
	\sE^{p,q}_{\ol X}(D) &:= \sE^{p+q}_{\ol X}(D) \cap j_* \sE^{p,q}_{X}
\end{align*}
on $\sE^\bullet_{\ol X}(D):=\sE^\bullet_{\ol X,\bbR}(D)\otimes\bbC$.  We define the Hodge filtration on the complex $\sE^\bullet_{\ol X}(D)$ by 
\[
	F^p \sE^\bullet_{\ol X}(D) = \bigoplus_{r\geq p} \sE^{r,s}_{\ol X}(D).
\]
Then we have the following.

\begin{theorem}[\cite{Bur94}*{Theorem 1.2, Theorem 2.1}]\label{theorem: Bu}
 	There exist filtered quasi-isomorphisms
	\[
		(Rj_*\bbR,\tau_{\leq\bullet}) \xrightarrow\cong (j_* \sE^\bullet_{X,\bbR}, \tau_{\leq\bullet}) 
		\xleftarrow\cong
		 (\sE^\bullet_{\ol X,\bbR}(D), \tau_{\leq\bullet}) \xrightarrow\cong
		 (\sE^\bullet_{\ol X,\bbR}(D),W_\bullet)
	\]
	which induces an isomorphism $(Rj_*\bbR,\tau_{\leq\bullet}) \xrightarrow\cong (\sE^\bullet_{\ol X,\bbR}(D),W_\bullet)$ in the filtered derived category,
	and a bifiltered quasi-isomorphism
	\begin{equation}\label{eq: resolution2}
		(\Omega^\bullet_{\ol X}(D),W_\bullet,F^\bullet) \xrightarrow\cong ( \sE^\bullet_{\ol X}(D),W_\bullet,F^\bullet).
	\end{equation}
\end{theorem}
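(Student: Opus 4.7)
The plan is to reduce the entire statement to local computations on a polydisc $\Delta^g$ adapted to $D$, where $D$ is cut out by $t_1\cdots t_h=0$, following the template Deligne established for the holomorphic case and adapting it with the Dolbeault/de Rham Poincar\'e lemmas.

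For the first chain of filtered quasi-isomorphisms, I would proceed in three steps. First, the arrow $Rj_*\bbR \to j_*\sE^\bullet_{X,\bbR}$ is immediate: the sheaves $\sE^k_{X,\bbR}$ are fine, hence $j_*$-acyclic, and the $C^\infty$ Poincar\'e lemma makes $\sE^\bullet_{X,\bbR}$ a resolution of $\bbR$ on $X$; the compatibility with $\tau_{\leq\bullet}$ is formal. Second, I would show $\sE^\bullet_{\ol X,\bbR}(D) \hookrightarrow j_*\sE^\bullet_{X,\bbR}$ is a quasi-isomorphism by a stalkwise computation at a point of $D$: both stalks have cohomology isomorphic to the exterior algebra $\bigwedge^\bullet\bbR^h$ generated by the classes of $\frac{1}{2\pi}d\arg t_\mu$ for $\mu\in\bsmu$, the nontrivial content being that these classes are already realized inside $\sE^\bullet_{\ol X,\bbR}(D)$ thanks to the explicit generators $\log|t_\mu|$, $\Re(dt_\mu/t_\mu)$, $\Im(dt_\mu/t_\mu)$. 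Third, for the identification of $\tau_{\leq\bullet}$ with $W_\bullet$ on $\sE^\bullet_{\ol X,\bbR}(D)$, one has the inclusion $\tau_{\leq n}\subset W_n$, so it suffices to check that the induced map $\Gr^\tau_n\to\Gr^W_n$ is a quasi-isomorphism; a local computation identifies $\Gr^W_n\sE^\bullet_{\ol X,\bbR}(D)$ with a direct sum $\bigoplus_{|I|=n}(i_{D_I})_*\sE^\bullet_{D_I,\bbR}[-n]$ via iterated residues (Poincar\'e residue adapted to the $C^\infty$ setting), which is concentrated in the single degree matching $\tau$.

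For the bifiltered quasi-isomorphism in the second half, the Hodge filtration is preserved by construction since both $F^p$'s are defined by the same $(p,q)$-type condition, and the weight filtration matches because the generator $dt_\mu/t_\mu$ on the holomorphic side corresponds to $d\log|t_\mu|+i\,d\arg t_\mu$, a combination of weight-one generators on the $\sE$-side. Hence it suffices to verify a bifiltered Poincar\'e--Dolbeault lemma: the map induces a quasi-isomorphism on each $\Gr^p_F\Gr^n_W$. Working locally and using the residue description above, $\Gr^n_W$ on both sides becomes a direct sum, indexed by $|I|=n$, of tensor products of the constant sheaf along $D_I$ with $\Omega^\bullet_{D_I}$ (resp.\ $\sE^\bullet_{D_I}$), shifted; applying $\Gr^p_F$ then cuts out $\Omega^p_{D_I}$ (resp.\ $\sE^{p,\bullet}_{D_I}$), and the standard Dolbeault lemma gives the quasi-isomorphism $\Omega^p_{D_I}\xrightarrow{\cong}\sE^{p,\bullet}_{D_I}$.

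The main obstacle is the local structure theorem for $\sE^\bullet_{\ol X,\bbR}(D)$ and its graded pieces. Because $\sE^\bullet_{\ol X,\bbR}(D)$ is defined as the \emph{image} of a tensor product in $j_*\sE^\bullet_{X,\bbR}$ rather than the tensor product itself, one must prove that locally this image is free as an $\sE_{\ol X,\bbR}$-module on a well-chosen set of monomials in the singular generators, so that $\Gr^W_\bullet$ is computed correctly and the iterated residue maps are genuinely quasi-isomorphisms of complexes. Once this structural fact is in hand, the spectral sequences for the two filtrations degenerate compatibly, yielding the stated bifiltered quasi-isomorphism and completing the proof; the delicate verifications are carried out in \cite{Bur94}*{\S 1--2}.
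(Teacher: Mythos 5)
The paper itself offers no proof of this statement: it is imported from Burgos \cite{Bur94}*{Theorems 1.2 and 2.1}, and the only ingredient it records is Proposition \ref{prop: Burgos 2.3}, the $\ol\partial$-exactness of $\Gr^W_n\Omega^p_{\ol X}(D)\to\Gr^W_n\sE^{p,\bullet}_{\ol X}(D)$, which it identifies as the basis for \eqref{eq: resolution2}. Your outline has the right overall shape and is consistent with that: the reduction to a polydisc adapted to $D$, the stalkwise computation of $j_*\sE^\bullet_{X,\bbR}$ against the classes of $\Im(dt_\mu/t_\mu)$, the graded Dolbeault lemma for the bifiltered comparison, and the identification of the local freeness of the image sheaf as the crux are all faithful to the structure of Burgos' argument.

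However, two of your intermediate assertions fail with the definitions as given in this paper, and both failures have the same source: $\log|t_\mu|$ is a generator of weight one and degree zero, so a form of degree $m$ can have weight strictly greater than $m$ (unlike the holomorphic logarithmic complex, where $W_n\Omega^m_{\ol X}(D)=\Omega^m_{\ol X}(D)$ for $m\leq n$). Concretely, $\log|t_\mu|^2$ lies in $\tau_{\leq 1}\sE^0_{\ol X,\bbR}(D)=\sE^0_{\ol X,\bbR}(D)$ but not in $W_1$, so your claimed inclusion $\tau_{\leq n}\subseteq W_n$ is false and the identity is not a filtered morphism; the comparison of $\tau_{\leq\bullet}$ with $W_\bullet$ has to be routed differently (through an intermediate filtration or through a direct computation of both $E_1$-pages), and this is one of the places where the local structure theorem is genuinely used rather than a formal step. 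For the same reason, your identification of $\Gr^W_n\sE^\bullet_{\ol X,\bbR}(D)$ with a direct sum of shifted pushforwards from the codimension-$n$ strata cannot hold at the level of complexes: $\Gr^W_n$ has nonzero terms in degrees below $n$ coming from monomials in the $\log|t_\mu|$. What is true, and what the argument actually needs, is only that the cohomology of $\Gr^W_n$ is concentrated in degree $n$; indeed the weight-one degree-zero generators are present precisely so that $\Re(dt_\mu/t_\mu)=d\log|t_\mu|$ becomes exact in $\Gr^W_1$, leaving only the classes of $\Im(dt_\mu/t_\mu)$, as required to match $R^nj_*\bbR$. Your plan goes through once these two steps are replaced by the corresponding quasi-isomorphism statements, which is the real content of \cite{Bur94}*{\S\S 1--2}.
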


The proof of \eqref{eq: resolution2} is based on the following result, noting that $\sE^\bullet_{\ol X}(D)$ is the simple complex
associated to the double complex $\sE^{\bullet,\bullet}_{\ol X}(D)$.

\begin{proposition}[\cite{Bur94} Proposition 2.3]\label{prop: Burgos 2.3}
	For any $n,p\in \bbZ$, we have the exact sequence
	\[\xymatrix{%
		0\ar[r]& \Gr^W_n \Omega^p_{\ol X}(D) \ar[r]& \Gr^W_n \sE^{p,0}_{\ol X}(D) \ar[r]^{\ol\partial} & \Gr^W_n \sE^{p,1}_{\ol X}(D) \ar[r]^{\quad\ol\partial} & \cdots.
	}\]
\end{proposition}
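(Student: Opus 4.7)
The proof is local on $\ol X$, so fix a coordinate neighborhood $U$ adapted to $D$ with local coordinates $t_1,\ldots,t_g$ such that $D\cap U=\{t_1\cdots t_h=0\}$. For $\mu\in\{1,\ldots,h\}$ write $\alpha_\mu=\log|t_\mu|$, $\omega_\mu=dt_\mu/t_\mu$, $\bar\omega_\mu=d\bar t_\mu/\bar t_\mu$; these are the weight-one generators, and a direct computation gives $\bar\partial\alpha_\mu=\tfrac12\bar\omega_\mu$ together with $\bar\partial\omega_\mu=\bar\partial\bar\omega_\mu=0$. For $B\subset\{1,\ldots,h\}$ put $\omega_B=\bigwedge_{\mu\in B}\omega_\mu$. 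Since $\omega_\mu\wedge\omega_\mu=0$, each section of $\Omega^p_{\ol X}(D)$ or $\sE^{p,q}_{\ol X}(D)$ decomposes uniquely as a sum $\sum_B\omega_B\wedge\eta_B$ with $\eta_B$ free of any $\omega_\mu$-factor; because $\bar\partial\omega_\mu=0$, the differential $\bar\partial$ respects this $B$-decomposition. It therefore suffices to treat each $B$-component of $\Gr^W_n\sE^{p,\bullet}_{\ol X}(D)$ separately and show it resolves the corresponding $B$-component of $\Gr^W_n\Omega^p_{\ol X}(D)$.

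Under the identification $\bar\omega_\mu\leftrightarrow 2\,d\alpha_\mu$, the $B$-component of $\Gr^W_n\sE^{p,\bullet}_{\ol X}(D)$ is naturally isomorphic to $\omega_B\wedge$ the total complex of the tensor-product double complex
\[
K^\bullet_{n-|B|}\otimes_{\bbC}\sE^{p-|B|,\bullet}_{\ol X},
\]
where $K^\bullet_{n-|B|}$ is the weight-$(n-|B|)$ subcomplex of the algebraic de Rham complex of $\Spec\bbC[\alpha_1,\ldots,\alpha_h]$; the Koszul differential on $K^\bullet_{n-|B|}$ encodes the part of $\bar\partial$ acting on the log-factors, while the vertical differential is the smooth $\bar\partial$. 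When $|B|=n$, the complex $K^\bullet_0$ is simply $\bbC$ in degree zero, so the $B$-component reduces to $\omega_B\wedge\sE^{p-n,\bullet}_{\ol X}$, which resolves $\omega_B\wedge\Omega^{p-n}_{\ol X}$ by the classical Dolbeault lemma. When $|B|<n$, the $B$-component of $\Gr^W_n\Omega^p_{\ol X}(D)$ vanishes (the holomorphic complex contains no $\alpha$- or $\bar\omega$-generators), so we need only acyclicity of the $B$-component of $\Gr^W_n\sE^{p,\bullet}_{\ol X}(D)$.

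For $n-|B|>0$, the complex $K^\bullet_{n-|B|}$ is acyclic: the Euler vector field $E=\sum_{\mu=1}^{h}\alpha_\mu\,\partial/\partial\alpha_\mu$ satisfies Cartan's formula $d\iota_E+\iota_E d=L_E$, and $L_E$ acts on the weight-$(n-|B|)$ component by multiplication by $n-|B|$, so $(n-|B|)^{-1}\iota_E$ furnishes the required chain homotopy. A standard spectral sequence argument (or direct K\"unneth, which is automatic since $K^\bullet_{n-|B|}$ is a bounded complex of finite-dimensional $\bbC$-vector spaces) then forces the total complex to be acyclic, completing the $|B|<n$ case. The main technical obstacle is verifying the local direct-sum decomposition rigorously: one must check that the natural local presentation of $\sE^{p,q}_{\ol X}(D)$ in terms of the generators $\alpha_\mu,\omega_\mu,\bar\omega_\mu$ is faithful so that the $B$-decomposition is genuine, and that the Leibniz expansion of $\bar\partial$ splits cleanly into a Koszul piece coming from $\bar\partial\alpha_\mu=\tfrac12\bar\omega_\mu$ and a smooth Dolbeault piece, producing the asserted tensor-product structure.
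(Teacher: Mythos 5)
The paper does not actually prove this statement: it is imported verbatim from Burgos \cite{Bur94}*{Proposition 2.3}, so there is no internal proof to compare against, and your attempt has to be judged against Burgos's argument. Your overall architecture is the right one and is essentially his: separate off the holomorphic logarithmic factors $\omega_B=\bigwedge_{\mu\in B}dt_\mu/t_\mu$ (which $\ol\partial$ neither creates nor destroys), contract the part generated by $\log|t_\mu|$ and $d\ol t_\mu/\ol t_\mu$ with an Euler-operator homotopy on the positive-weight Koszul factor, and finish with the classical Dolbeault lemma on the remaining smooth factor. Those two homotopies are correct as you state them.

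The genuine gap is the structural claim you defer to the end, and it is not a mere ``faithfulness'' check: the asserted \emph{unique} decomposition $\sum_B\omega_B\wedge\eta_B$ is false as stated. The presentation by the generators $\log|t_\mu|$, $dt_\mu/t_\mu$, $d\ol t_\mu/\ol t_\mu$ is not free: one has weight-dropping relations such as $t_\mu\cdot(dt_\mu/t_\mu)=dt_\mu$ and annihilating relations such as $dt_\mu\wedge(dt_\mu/t_\mu)=0$, so for example $dt_\mu$ admits both a $B=\emptyset$ and a $B=\{\mu\}$ presentation. Consequently the $B$-component of $\Gr^W_n$ is not $\omega_B\wedge\Omega^{p-|B|}_{\ol X}$ (resp.\ $\omega_B\wedge\sE^{p-|B|,\bullet}_{\ol X}$) but its quotient by the ideal generated by $t_\mu,\ol t_\mu,dt_\mu,d\ol t_\mu$ for $\mu$ in the relevant index sets, i.e.\ forms supported on the closed strata $D_B$ --- already for $g=1$, $\Gr^W_1\Omega^1_{\ol X}(D)\cong\cO_D\cdot dt/t$, not $\cO_{\ol X}\cdot dt/t$. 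To make your reduction legitimate you must (i) identify $\Gr^W_n\sE^{p,q}_{\ol X}(D)$ with the corresponding sum of stratum-supported smooth forms tensored with the polynomial algebra in the $\log|t_\mu|$ and the exterior algebra in the $d\ol t_\mu/\ol t_\mu$, and (ii) prove there are \emph{no further} relations, in particular that the monomials in $\log|t_\mu|$ are linearly independent over $\sE_{\ol X}$ modulo lower weight; the latter needs a growth or asymptotic-expansion argument (no smooth multiple of $\log|t_\mu|$ is smooth across $t_\mu=0$). This local structure theorem is exactly the substantive content of Burgos's computation, and your homotopies only become a proof once it is in place (they do survive the correction, since the Koszul factor stays exact and the Dolbeault lemma applies on each stratum).
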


The advantage of using this complex is that the sheaf $\sE_{\ol X,\bbR}$ is fine, which gives the following.

\begin{lemma}\label{lem: acyclic}
	For integers $m,n,p\in\bbZ$, the sheaves $\Gr^W_n \sE^m_{\ol X,\bbR}(D)$ and $\Gr^W_n \Gr^p_F \sE^m_{\ol X}(D)$ are 
	acyclic with respect to the global section functor $\Gamma(\ol X, -)$.
\end{lemma}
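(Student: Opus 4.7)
The plan is to exploit the fact that $\sE_{\ol X,\bbR}$ is a fine sheaf (the manifold $\ol X$ is paracompact Hausdorff and admits smooth partitions of unity), together with the standard principle that any sheaf of modules over a fine sheaf of rings is itself fine, hence soft, hence acyclic for the global section functor $\Gamma(\ol X,-)$. So the entire proof reduces to verifying that each of the two sheaves in the statement is naturally an $\sE_{\ol X,\bbR}$-module (respectively an $\sE_{\ol X}$-module over $\bbC$).

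First, I would observe that by construction $\sE^m_{\ol X,\bbR}(D)$ is, as a quotient of $\sA^\bullet_{\ol X,\bbR}(D)\otimes_{\sA_{\ol X,\bbR}}\sE_{\ol X,\bbR}$, canonically a module over $\sE_{\ol X,\bbR}$. The key compatibility to check is that the weight filtration $W_\bullet$ is stable under this module action. Since the generators of $\sE_{\ol X,\bbR}$ all have weight zero by definition, multiplication by a section of $\sE_{\ol X,\bbR}$ preserves $W_n\sE^m_{\ol X,\bbR}(D)$ for every $n$. Consequently both $W_n\sE^m_{\ol X,\bbR}(D)$ and the quotient $\Gr^W_n\sE^m_{\ol X,\bbR}(D)$ inherit an $\sE_{\ol X,\bbR}$-module structure, so they are fine sheaves and hence $\Gamma(\ol X,-)$-acyclic.

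For the second statement, I would complexify to $\sE_{\ol X}=\sE_{\ol X,\bbR}\otimes\bbC$, which is again a fine sheaf of rings. The complex $\sE^\bullet_{\ol X}(D)$ is an $\sE_{\ol X}$-module, and I must check that both $W_\bullet$ and $F^\bullet$ are stable under multiplication by $\sE_{\ol X}$. Stability under $W_\bullet$ follows as above because $\sE_{\ol X}$-sections have weight zero. Stability under $F^\bullet$ holds because any section of $\sE_{\ol X}$ has bidegree $(0,0)$ with respect to the Hodge decomposition $\sE^m_{\ol X}(D)=\bigoplus_{p+q=m}\sE^{p,q}_{\ol X}(D)$, so multiplication by such a section preserves the holomorphic degree and hence preserves $F^p\sE^m_{\ol X}(D)=\bigoplus_{r\geq p}\sE^{r,m-r}_{\ol X}(D)$. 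Therefore the successive quotients $\Gr^W_n\Gr^p_F\sE^m_{\ol X}(D)$ are $\sE_{\ol X}$-modules, so fine, and the claimed acyclicity follows.

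The only substantive point, and the potential obstacle, is the compatibility of the filtrations with the module structure; once that is in hand the acyclicity is purely formal. Since the filtrations were constructed to be multiplicative with $\sE_{\ol X,\bbR}$ (resp.\ $\sE_{\ol X}$) lying in the zeroth step, this compatibility is essentially built into the definitions, so I expect no real difficulty beyond spelling this observation out.
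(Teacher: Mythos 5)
Your proof is correct and takes essentially the same route as the paper: the paper's argument is precisely that these sheaves are $\sE_{\ol X,\bbR}$-modules over a sheaf of rings admitting partitions of unity, hence fine, hence $\Gamma(\ol X,-)$-acyclic. The extra verification you supply — that $W_\bullet$ and $F^\bullet$ are stable under multiplication by weight-zero, bidegree-$(0,0)$ sections, so the graded pieces inherit the module structure — is a detail the paper leaves implicit, and you have checked it correctly.
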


\begin{proof}
	Since the sheaf $\sE_{\ol X,\bbR}$ has a partition of unity, the $\sE_{\ol X,\bbR}$-modules
	$\Gr^W_n \sE^m_{\ol X,\bbR}(D)$ and $\Gr^W_n \Gr^p_F \sE^m_{\ol X}(D)$ for $m, n, p\in\bbZ$ are fine,
	hence is acyclic with respect to the functor $\Gamma(\ol X,-)$.
\end{proof}

%
%
\subsection{Cohomology of the Logarithm Sheaf}
%
%

In this subsection, we first define the $\sE_{\ol X,\bbR}$-module with connection $\sLog^N(D)$.
We will apply Theorem \ref{theorem: Bu} to this module to calculate the cohomology of $U$ with coefficients in $\bbLog^N$.

The module $\cLog^N(D) \otimes\sE_{\ol X}(D)$ has a connection $\nabla: \cLog^N(D) \otimes\sE_{\ol X}(D) \rightarrow \cLog^N(D) \otimes\sE^1_{\ol X}(D)$
induced from the connection on $\cLog^N(D)$.
If we let 
\begin{equation}\label{eq: e}
	\ul e^\bsk:= i^{\abs{\bsk}} \exp\biggl(\sum_{\nu=1}^g (-\log|t_\nu|)\omega_\nu\biggr)\cdot\ul\omega^\bsk
\end{equation}
for $\bsk\in\bbN^g$, where the operator $\omega_\mu$ acts on $\ul\omega^\bsk$ by $\omega_\mu\cdot\ul\omega^\bsk=\ul\omega^{\bsk + 1_\mu}$,
then we have
\begin{align*}
	\nabla(\ul e^\bsk)& = i^{\abs\bsk} \exp\biggl(\sum_{\nu=1}^g (-\log|t_\nu|)\omega_\nu\biggr)\cdot \sum_{\mu=1}^g \ul\omega^{\bsk+1_\mu} \otimes \frac{dt_\mu}{t_\mu} \\
	&\qquad-i^{\abs{\bsk}} \sum_{\mu=1}^g \exp\biggl( \sum_{\nu=1}^g(-\log|t_\nu|)\omega_\nu\biggr)\omega_\mu\cdot \ul\omega^{\bsk} \otimes
	\frac{1}{2}\biggl( \frac{dt_\mu}{t_\mu} + \frac{d \ol t_\mu}{\ol t_\mu}\biggr)\\
	&= \sum_{\mu=1}^g i^{\abs{\bsk}} \exp\biggl(  \sum_{\nu=1}^g (-\log|t_\nu|)\omega_\nu\biggr)\cdot\ul\omega^{\bsk+1_\nu} \otimes i\Im\frac{dt_\nu}{t_\nu} 
	= \sum_{\mu=1}^g \ul e^{\bsk+1_\mu}\otimes\Im\frac{dt_\mu}{t_\mu},
\end{align*}
that is
\begin{equation}\label{eq: connection e}
	\nabla(\ul e^\bsk) = \sum_{\mu=1}^g \ul e^{\bsk+1_\mu} \otimes \Im \frac{dt_\mu}{t_\mu}.
\end{equation}
This calculation shows that we may define an $\bbR$-structure $\sLog^N(D)$ on $\cLog^N(D)$ as follows.

\begin{definition}\label{def: sLog}
	We let $\sLog^N(D)$ be the free $\sE_{\ol X,\bbR}$-module generated by 
	$
		\ul e^\bsk
	$
	of \eqref{eq: e}
	for $\bsk\in\bbN^g$ such that $\abs{\bsk}\leq N$, with connection
	\[
		\nabla: \sLog^N(D) \rightarrow \sLog^N(D) \otimes\sE^1_{\ol X,\bbR}(D)
	\]
	given by \eqref{eq: connection e}.
	By definition,  $\sLog^N(D)$ is an $\sE_{\ol X,\bbR}$-submodule of $\cLog^N(D) \otimes\sE_{\ol X}(D)$.
\end{definition}

We define the filtration $W_\bullet$ on $\sLog^N(D)$ such that $\ul e^\bsk$ is of weight $-2\abs\bsk$.
The basis $\ul\omega^\bsk$ of $\cLog^N(D)$ in terms of $\ul e^\bsk$ is given by
\begin{equation}\label{eq: base change two}
	\ul\omega^\bsk = (-i)^{\abs\bsk} \exp\biggl(\sum_{\nu=1}^g\log|t_\nu|\omega_\nu\biggr)\cdot\ul e^\bsk,
\end{equation} 
where  $\omega_\mu$ acts on $\ul e^\bsk$ as
\begin{align*}
	\omega_\mu \cdot \ul e^\bsk &= i^{\abs{\bsk}} \exp\biggl( \sum_{\nu=1}^g(-\log|t_\nu|)\omega_\nu\biggr)\omega_\mu\cdot\ul\omega^\bsk 
	=  i^{\abs{\bsk}} \exp\biggl( \sum_{\nu=1}^g(-\log|t_\nu|)\omega_\nu\biggr)\cdot\ul\omega^{\bsk+1_\mu}\\
	&= (-i) i^{\abs\bsk+1} \exp\biggl( \sum_{\nu=1}^g(-\log|t_\nu|)\omega_\nu\biggr)\cdot\ul\omega^{\bsk+1_\mu} = (-i) \ul e^{\bsk+1_\mu}.
\end{align*}
Note that we have an exact sequence
\begin{equation}\label{eq: SES Log^N Bu}
	0 \rightarrow \Sym^{N} \bbR(\bone) \otimes \sE_{\ol X,\bbR} \rightarrow \sLog^N(D) \rightarrow \sLog^{N-1}(D)\rightarrow 0.
\end{equation}
compatible with the connection and $W_\bullet$.
The natural inclusion induces an isomorphism of projective systems
\begin{equation}\label{eq: iota}
	 \sLog^N(D) \otimes \sE_{\ol X}(D)  = \cLog^N(D)\otimes \sE_{\ol X}(D),
\end{equation}
which is compatible with the connection and the filtration $W_\bullet$.
The relation between the basis $(\ul u^\bsk)_{\bsk\in\bbN^g}$ of $\bbLog^N$ and $(\ul e^\bsk)_{\bsk\in\bbN^g}$ of $\sLog^N(D)$ is given by
\begin{align*}
	\ul u^\bsk &= (2\pi i)^{\abs\bsk} \exp\biggl( \sum_{\nu=1}^g (- \log t_\nu) \omega_\nu\biggr)\cdot\ul\omega^\bsk = (2\pi)^{\abs\bsk}  
	\exp\biggl(  \sum_{\nu=1}^g (- i \Im(\log t_\nu)) \omega_\nu\biggr)\cdot\ul e^\bsk\\
	 &= (2\pi)^{\abs\bsk}  \exp\biggl(\sum_{\nu=1}^g (-\Im(\log t_\nu))e_\nu\biggr)\cdot\ul e^\bsk,
\end{align*}
where $e_\nu$ acts on $\ul e^\bsk$ by $e_\nu\cdot\ul e^{\bsk} = \ul e^{\bsk+1_\nu}$, the equality given locally for each choice of a branch of $\log t_\nu$.
Since both $(2\pi)^{\abs\bsk}$ and $\Im(\log t_\nu)$ are real-valued functions on $X$, this gives an isomorphism
\begin{equation}\label{eq: BE two}
	\bbLog^N \otimes \sE_{X,\bbR} \cong \sLog^N := \sLog^N(D)|_X
\end{equation}
which is compatible with the connection and the filtration $W_\bullet$.
Moreover, since $\Gr^W_n \bbLog^N$ is a constant variation of pure Hodge structure on $X$, this naturally
extends to a constant variation of pure Hodge structures on $\ol X$, and the isomorphism induced on $\Gr^W_n$ by \eqref{eq: BE two} extends to an isomorphism
\begin{equation}\label{eq: BE three}
	\Gr^{W}_n \sLog^N(D) \cong \Gr^W_n \bbLog^N  \otimes \sE_{\ol X,\bbR}
\end{equation}
on $\ol X$.

\begin{definition}
	We define the logarithmic \v{C}ech-Dolbeault complex $\sLog^N(D)\otimes\sE^\bullet_{\ol X,\bbR}(D^\bullet)$ on $\ol X$ to be the simple complex associated to the double complex
	\[
		\bigoplus_{\abs{\bsmu}=1} \sLog^N(D) \otimes \sE^\bullet_{\ol X,\bbR}(D^\bsmu) \xrightarrow{\partial}\cdots
		\xrightarrow{\partial}\bigoplus_{\abs{\bsmu}=g} \sLog^N(D) \otimes \sE^\bullet_{\ol X,\bbR}(D^\bsmu)
	\]
	with $\bigoplus_{\abs{\bsmu}=1} \sLog^N(D) \otimes \sE_{\ol X,\bbR}(D^\bsmu)$ in degree $0$,
	where the horizontal differentials are the standard alternating sums.
\end{definition}

\begin{proposition}\label{prop: real structure}
	Let $j: U \hookrightarrow \ol X$ be the natural inclusion.  Then we have canonical isomorphisms
	\begin{equation}\label{eq: real isomorphism}
		Rj_* \bbLog^N \cong Rj_* (\bbLog^N \otimes \sE^\bullet_{U,\bbR})\cong\sLog^N(D) \otimes \sE^\bullet_{\ol X,\bbR}(D^\bullet)
	\end{equation}
	in the derived category of abelian sheaves on $\ol X$.   In particular, by taking the functor $R\Gamma(\ol X, -)$, we obtain
	a canonical isomorphism
	\[
	 	H^m(U, \bbLog^N) \cong  H^m(\Gamma(\ol X,\sLog^N(D)\otimes\sE^\bullet_{\ol X,\bbR}(D^\bullet))).
	\]
\end{proposition}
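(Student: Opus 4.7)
The plan is to mimic the proof of Proposition \ref{prop: open complex}, replacing the holomorphic de Rham complex $\Omega^\bullet_X$ with the $C^\infty$-de Rham complex $\sE^\bullet_{X,\bbR}$ and invoking Burgos' Theorem \ref{theorem: Bu} in place of Proposition \ref{prop: Deligne}. Under the identification \eqref{eq: BE two}, $\bbLog^N \otimes \sE^\bullet_{U,\bbR}$ becomes $\sLog^N \otimes_{\sE_{U,\bbR}} \sE^\bullet_{U,\bbR}$ with differential induced by the connection \eqref{eq: connection e}, and the $C^\infty$-Poincar\'e lemma (the analogue of \eqref{eq: infinity resolution} with coefficients in a local system) gives the resolution $0 \to \bbLog^N \to \sLog^N \otimes \sE^\bullet_{U,\bbR}$ on $U$. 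This yields the first isomorphism $Rj_* \bbLog^N \cong Rj_*\bigl(\bbLog^N \otimes \sE^\bullet_{U,\bbR}\bigr)$.

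For the second isomorphism, I would use the cover $\{U_\mu\}_{\mu\in[g]}$ of $U$ with $U_\mu = \ol X \setminus (D \cup Z_\mu)$, so that $U_\bsmu := \bigcap_{\mu\in\bsmu}U_\mu = \ol X \setminus D^\bsmu$, together with $j^\circ_\bsmu : U_\bsmu \hookrightarrow U$ and $j_\bsmu := j \circ j^\circ_\bsmu$. The standard \v Cech quasi-isomorphism, combined with the $j_{\bsmu*}$-acyclicity of the fine $\sE$-modules (which is the same acyclicity implicit in the proof of Theorem \ref{theorem: Bu}), gives
\[
	Rj_*\bigl(\sLog^N \otimes \sE^\bullet_{U,\bbR}\bigr) \cong j_{\bullet*}\bigl(\sLog^N \otimes \sE^\bullet_{U_\bullet,\bbR}\bigr).
\]
It then suffices to construct, for each $\bsmu \subset [g]$, compatible quasi-isomorphisms
\[
	\sLog^N(D) \otimes \sE^\bullet_{\ol X,\bbR}(D^\bsmu) \xrightarrow{\cong} j_{\bsmu*}\bigl(\sLog^N \otimes \sE^\bullet_{U_\bsmu,\bbR}\bigr),
\]
which assemble into a quasi-isomorphism of the two \v Cech double complexes.

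The main obstacle is this last quasi-isomorphism, which I would prove by induction on $N$. For $N=0$, we have $\sLog^0(D) = \bbR$, and the statement reduces to Burgos' Theorem \ref{theorem: Bu} applied to the open embedding $U_\bsmu \hookrightarrow \ol X$ whose complement is the normal crossing divisor $D^\bsmu$. For the inductive step, tensoring the short exact sequence \eqref{eq: SES Log^N Bu} with $\sE^\bullet_{\ol X,\bbR}(D^\bsmu)$ preserves exactness (since $\sE^\bullet_{\ol X,\bbR}(D^\bsmu)$ is locally free over $\sE_{\ol X,\bbR}$), and the analogous sequence obtained on $U_\bsmu$ reduces the claim via the five lemma to the constant-coefficient case. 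The delicate point is compatibility with the twisted differential: one must verify that the logarithmic contributions $\Im(dt_\mu/t_\mu)$ appearing in $\nabla(\ul e^\bsk)$ via \eqref{eq: connection e} are already present in $\sE^1_{\ol X,\bbR}(D^\bsmu)$, which is guaranteed by the very definition of $\sE^\bullet_{\ol X,\bbR}(D)$ from the generators listed in \eqref{eq: weight one}.

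Finally, for the cohomology identification, each $\sLog^N(D) \otimes \sE^m_{\ol X,\bbR}(D^\bsmu)$ is a finite rank $\sE_{\ol X,\bbR}$-module, hence fine and $\Gamma(\ol X, -)$-acyclic in the manner of Lemma \ref{lem: acyclic}. Applying $R\Gamma(\ol X, -)$ to the derived-category isomorphism established above therefore just computes the cohomology of the complex of global sections, yielding the stated identification for $H^m(U, \bbLog^N)$.
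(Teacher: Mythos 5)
Your proposal follows the paper's proof essentially step for step: the first isomorphism via the $C^\infty$ resolution of the local system, the second via the \v Cech cover $\{U_\mu\}$ and an induction on $N$ using the short exact sequence \eqref{eq: SES Log^N Bu} with base case $N=0$ handled by Theorem \ref{theorem: Bu}, and the final cohomology identification by fineness of the $\sE_{\ol X,\bbR}$-modules. The argument is correct and matches the paper's route, with your remarks on exactness of the tensor and on the logarithmic terms $\Im(dt_\mu/t_\mu)$ merely making explicit details the paper leaves implicit.
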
 

\begin{proof}
	Since $\bbLog^N$ is an $\bbR$-local system on $U$, the resolution \eqref{eq: infinity resolution} gives a resolution
	\[
		0 \rightarrow \bbLog^N \rightarrow \bbLog^N \otimes \sE_{U,\bbR} \rightarrow \bbLog^N\otimes\sE^1_{U,\bbR} \rightarrow \cdots
	\]
	of $\bbLog^N$ on $U$.  By taking $Rj_*$, we obtain the first isomorphism of \eqref{eq: real isomorphism}.  
	As in the proof of Proposition \ref{prop: open complex},
	for $\mu\in[g]$, we let $U_\mu= X \setminus Z_\mu = \ol X \setminus (D \cup Z_\mu)$, which gives an open covering $\{ U_\mu \}$ of $U$.
	For $\bsmu \subset [g]$, we let $U_\bsmu = \ol X \setminus D^\bsmu = \bigcap_{\mu\in\bsmu} U_\mu$, and we let $j_\bsmu : U_\bsmu\hookrightarrow \ol X$
	be the natural inclusion.  If we let $j_{\bullet*}(\bbLog^N \otimes \sE^\bullet_{U_\bullet,\bbR})$  be the \v Cech complex  defined as the simple complex associated to the double complex
	\[
		\prod_{\abs{\bsmu}=1} j_{\bsmu*} (\bbLog^N \otimes \sE^\bullet_{U_\bsmu,\bbR}) \rightarrow\cdots\rightarrow\prod_{\abs{\bsmu}=g} j_{\bsmu*} (\bbLog^N \otimes \sE^\bullet_{U_\bsmu,\bbR}).
	\]
	Following a similar argument to that in \eqref{eq: first resolution}, noting that $\bbLog^N \otimes \sE^\bullet_{U_\bsmu,\bbR}$ are complexes of fine sheaves,  we have a quasi-isomorphism
	\begin{equation}\label{eq: second}
		R j_{*}(\bbLog^N \otimes \sE^\bullet_{U,\bbR})   \xrightarrow\cong j_{\bullet*}(\bbLog^N \otimes \sE^\bullet_{U_\bullet,\bbR}).
	\end{equation}
	In order to complete our proof, we prove by induction on $N \geq 0$ that for any $\bsmu \subset [g]$, the natural morphism
	\begin{equation}\label{eq: third}
		\sLog^N(D) \otimes \sE^\bullet_{\ol X,\bbR}(D^\bsmu)  \rightarrow  j_{\bsmu*}(\bbLog^N \otimes \sE^\bullet_{U_\bsmu,\bbR})
	\end{equation}
	is a quasi-isomorphism.  If $N=0$, then $\bbLog^0 \cong \bbR$ and $\sLog^0(D) \cong \sE_{\ol X,\bbR}$, hence \eqref{eq: third} is a quasi-isomorphism by 
	Theorem \ref{theorem: Bu}.  Suppose \eqref{eq: third} is a quasi-isomorphism for $N-1 \geq 0$.
	The short exact sequence of \eqref{eq: SES Log^N Bu} gives the commutative diagram
	\[\small\xymatrix{
		0 \ar[r]& \Sym^{N}\bbR(\bone)\otimes \sE^\bullet_{\ol X,\bbR}(D^\bsmu)\ar[r]\ar[d]^\cong& \sLog^N(D) \otimes \sE^\bullet_{\ol X,\bbR}(D^\bsmu)
		 \ar[r]\ar[d]& \sLog^{N-1}(D)   \otimes \sE^\bullet_{\ol X,\bbR}(D^\bsmu)\ar[r]\ar[d]^\cong& 0  \\
		0 \ar[r]& j_{\bsmu*}( \Sym^{N}\bbR(\bone)\otimes \sE^\bullet_{U_\bsmu,\bbR}) 
		 \ar[r]& j_{\bsmu*}(\sLog^N \otimes \sE^\bullet_{U_\bsmu,\bbR})\ar[r]& j_{\bsmu*}(\sLog^{N-1} \otimes \sE^\bullet_{U_\bsmu,\bbR}) \ar[r]& 0. 
	}\]
	The first vertical morphism is a quasi-isomorphism by  Theorem \ref{theorem: Bu}, noting that 
	\[
		j_{\bsmu*}( \Sym^{N}\bbR(\bone) \otimes \sE^\bullet_{U_\bsmu,\bbR}) =   \Sym^{N}\bbR(\bone) \otimes j_{\bsmu*} \sE^\bullet_{U_\bsmu,\bbR},
	\]
	and the third vertical morphism is a quasi-isomorphism by the induction hypothesis.
	Hence the middle vertical morphism is a quasi-isomorphism, which by induction proves that \eqref{eq: third} is a quasi-isomorphism for any $N\geq 0$.
	Combining this result with \eqref{eq: second}, we obtain isomorphisms
	\[
		R j_{*}(\bbLog^N \otimes \sE^\bullet_{U,\bbR})  \cong j_{\bullet*}(\bbLog^N \otimes \sE^\bullet_{U_\bullet,\bbR})
		\cong \sLog^N(D) \otimes \sE^\bullet_{\ol X,\bbR}(D^\bullet)
	\]
	in the derived category, which proves the second isomorphism of \eqref{eq: real isomorphism}.  The statement for cohomology follows from 
	\eqref{eq: real isomorphism} and the fact that $\sLog^N\otimes\sE^\bullet_{\ol X,\bbR}(D)$ is a complex of fine sheaves.
\end{proof}

%
%
\subsection{Mixed Hodge Structure of the Logarithm Sheaf}\label{subsection: MHS log}
%
%

In this subsection, we show that the weight and Hodge filtrations on $\sLog^N(D)$ induce the weight and Hodge filtrations of the mixed $\bbR$-Hodge
structure on the cohomology of $U$ with coefficients in $\bbLog^N$.

We define the filtration $W_\bullet$ on $\sLog^N(D)\otimes\sE^\bullet_{\ol X,\bbR}(D^\bsmu)$ and $F^\bullet$ on $\sLog^N(D)\otimes\sE^\bullet_{\ol X}(D^\bsmu)$ by 
\[
	\begin{split}
		W_{n}(\sLog^N(D)\otimes\sE^\bullet_{\ol X,\bbR}(D^\bsmu)) &:= \sum_{k\in\bbZ} W_{n-k}\sLog^N(D)\otimes W_k \sE^\bullet_{\ol X,\bbR}(D^\bsmu), \\
		F^p(\sLog^N(D)\otimes\sE^\bullet_{\ol X}(D^\bsmu)) &:= \sum_{q\in\bbZ} F^{p-q} \cLog^N(D)\otimes F^q\sE^\bullet_{\ol X}(D^\bsmu),
	\end{split}
\]
where we use the equality $\sLog^N(D)\otimes\sE^\bullet_{\ol X}(D^\bsmu) = \cLog^N(D)\otimes\sE^\bullet_{\ol X}(D^\bsmu)$ induced from \eqref{eq: iota} for the definition of $F^\bullet$.
For any integer $n\in\bbZ$, we let $W_n(\sLog^N(D)\otimes\sE^\bullet_{\ol X,\bbR}(D^{\bullet}))$ be the subcomplex of $\sLog^N(D)\otimes\sE^\bullet_{\ol X,\bbR}(D^{\bullet})$
defined as the simple complex associated to the double complex
\[
	\bigoplus_{|\bsmu|=1} W_n(\sLog^N(D)\otimes\sE^\bullet_{\ol X,\bbR}(D^{\bsmu})) \rightarrow 
	 \cdots \rightarrow  \bigoplus_{|\bsmu|=g} W_{n+g-1}(\sLog^N(D)\otimes\sE^\bullet_{\ol X,\bbR}(D^{\bsmu})),
\]
and for any integer $p\in\bbZ$, we let $F^p(\sLog^N(D)\otimes\sE^\bullet_{\ol X}(D^{\bullet}))$ be the subcomplex of $\sLog^N(D)\otimes\sE^\bullet_{\ol X}(D^{\bullet})$
defined as the simple complex associated to the double complex
\[
	\bigoplus_{|\bsmu|=1} F^p(\sLog^N(D)\otimes\sE^\bullet_{\ol X}(D^{\bsmu})) \rightarrow \cdots \rightarrow 
	 \bigoplus_{|\bsmu|=g} F^p(\sLog^N(D)\otimes\sE^\bullet_{\ol X}(D^{\bsmu})).
\]

\begin{theorem}\label{thm: unipotent Bu}
	The natural inclusion
	\[
		\cLog^N(D)\otimes \Omega^\bullet_{\ol X}(D^\bullet) \rightarrow 
		 \cLog^N(D)\otimes \sE^\bullet_{\ol X}(D^\bullet) 
		  =  \sLog^N(D)\otimes \sE^\bullet_{\ol X}(D^\bullet)
	\]
	induces a bifiltered quasi-isomorphism 
	\[
		(\cLog^N(D)\otimes \Omega^\bullet_{\ol X}(D^\bullet), W_\bullet, F^\bullet) \xrightarrow\cong
		( \sLog^N(D)\otimes \sE^\bullet_{\ol X}(D^\bullet), W_\bullet, F^\bullet).
	\]
\end{theorem}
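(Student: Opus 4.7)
The plan is to reduce Theorem~\ref{thm: unipotent Bu} to Burgos's constant-coefficient bifiltered quasi-isomorphism (Theorem~\ref{theorem: Bu}) by induction on $N$, then assemble along the \v Cech direction. Specifically, I would fix $\bsmu\subset[g]$ and first show that the inclusion
\[
\cLog^N(D)\otimes\Omega^\bullet_{\ol X}(D^\bsmu)\to\sLog^N(D)\otimes\sE^\bullet_{\ol X}(D^\bsmu)
\]
is a bifiltered quasi-isomorphism for the induced $W_\bullet$ and $F^\bullet$. The base case $N=0$ reduces to Theorem~\ref{theorem: Bu} applied to the simple normal crossing divisor $D^\bsmu$, using $\cLog^0(D)=\cO_{\ol X}$ and $\sLog^0(D)=\sE_{\ol X,\bbR}$.

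For the inductive step from $N-1$ to $N$, I would combine \eqref{eq: SES Log^N Bu} with its coherent analogue
\[
0\to\Sym^N\bbR(\bone)\otimes\cO_{\ol X}\to\cLog^N(D)\to\cLog^{N-1}(D)\to 0,
\]
whose kernel $\prod_{|\bsk|=N}\cO_{\ol X}\ul\omega^\bsk$ carries the induced trivial connection. Tensoring with $\Omega^\bullet_{\ol X}(D^\bsmu)$, respectively $\sE^\bullet_{\ol X}(D^\bsmu)$, preserves exactness by flatness and yields a commutative ladder in which the left vertical map is a bifiltered quasi-isomorphism (Theorem~\ref{theorem: Bu} tensored with the constant Hodge structure $\Sym^N\bbR(\bone)$) and the right vertical map is one by the inductive hypothesis. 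The five-lemma applied on $\Gr^W_n\Gr^p_F$ then yields the middle, provided that both short exact sequences are \emph{strictly} compatible with $W_\bullet$ and $F^\bullet$. Strictness for $W_\bullet$ is immediate from the direct-sum description of the weight pieces; strictness for $F^\bullet$ uses that $\Sym^N\bbR(\bone)$ has pure Hodge type $(-N,-N)$, so its only Hodge jump aligns with the filtration jump produced by the projection $\cLog^N(D)\to\cLog^{N-1}(D)$.

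To finish, I would assemble along the \v Cech direction: the horizontal \v Cech differentials act only on the de Rham/Dolbeault factor and commute with the vertical inclusion, while the filtrations on the two double complexes are defined by identical formulas (with the weight shift $W_{n+k-1}$ on the row $|\bsmu|=k$), so the termwise bifiltered quasi-isomorphism propagates to the totalizations via the spectral sequence of the filtered double complex. I expect the main obstacle to be $F^\bullet$-strictness of the coherent short exact sequence, which reduces to checking
\[
F^{-p}\cLog^N(D)\cap\prod_{|\bsk|=N}\cO_{\ol X}\ul\omega^\bsk=F^{-p}\bigl(\Sym^N\bbR(\bone)\bigr)\otimes\cO_{\ol X}
\]
by matching the Hodge jump of $\Sym^N\bbR(\bone)$ to that of $\cLog^N(D)$. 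An alternative route would bypass the induction by working directly on $\Gr^W\Gr^F$ via Proposition~\ref{prop: Burgos 2.3} and \eqref{eq: BE three}, at the cost of making the Hodge bigrading on $\Gr^W\cLog^N(D)$ explicit.
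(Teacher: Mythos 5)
Your proposal is correct, but your primary route (induction on $N$ via the short exact sequence \eqref{eq: SES Log^N Bu} and its coherent analogue, plus the five lemma on $\Gr^p_F\Gr^W_n$) is not the one the paper takes; the paper's proof is essentially the ``alternative route'' you mention in your last sentence. After the same reduction to a fixed $\bsmu$, the paper observes that $W_\bullet$ on $\cLog^N(D)$ and on $\sLog^N(D)$ are filtrations by free direct summands, so that
$\Gr^W_n$ of either tensor product splits as $\bigoplus_k \Gr^W_{n-k}\bbLog^N_\bbC\otimes\Gr^W_k\Omega^\bullet_{\ol X}(D^\bsmu)$, respectively $\bigoplus_k \Gr^W_{n-k}\bbLog^N_\bbC\otimes\Gr^W_k\sE^\bullet_{\ol X}(D^\bsmu)$; applying $\Gr^p_F$ then leaves sums of \emph{constant} $\bbC$-local systems $\Gr^{p-q}_F\Gr^W_{n-k}\bbLog^N_\bbC$ tensored with $\Gr^W_k\Omega^q_{\ol X}(D^\bsmu)$ resp.\ $\Gr^W_k\sE^{q,\bullet}_{\ol X}(D^\bsmu)$, and the claim reduces in one step to Proposition \ref{prop: Burgos 2.3}. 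The point the paper exploits, and which your induction does not need to name explicitly, is that the connection (and hence all of the $N$-dependence beyond the size of the constant coefficient systems) dies on $\Gr^W$ by unipotency, so no induction is required. Your route is sound: the strictness checks you flag do go through (the kernel of $\cLog^N(D)\to\cLog^{N-1}(D)$ is $\prod_{|\bsk|=N}\cO_{\ol X}\ul\omega^\bsk$, on which both $W_\bullet$ and $F^\bullet$ are concentrated in the single steps $-2N$ and $-N$, matching $\Sym^N\bbR(\bone)$ exactly), and the base case and left column are indeed instances of Theorem \ref{theorem: Bu}. What your approach buys is modularity --- it uses Theorem \ref{theorem: Bu} as a black box and would adapt to any unipotent coefficient system presented by such weight extensions; what the paper's approach buys is brevity, avoiding both the induction and the bistrictness verification by computing $\Gr^p_F\Gr^W_n$ once and for all.
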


\begin{proof}
	It is sufficient to prove that we have a bifiltered quasi-isomorphism
	\[
		(\cLog^N(D)\otimes \Omega^\bullet_{\ol X}(D^\bsmu), W_\bullet, F^\bullet) \xrightarrow\cong
		( \sLog^N(D)\otimes \sE^\bullet_{\ol X}(D^\bsmu), W_\bullet, F^\bullet)
	\]
	for any $\bsmu\subset[g]$.
	Since the filtration $W_\bullet$ on $\cLog^N(D)$ and $\sLog^N(D)$ are filtrations by free submodules, we have
	\begin{align*}
		\Gr^W_n (\cLog^N(D)\otimes \Omega^\bullet_{\ol X}(D^\bsmu) ) 
		&= \bigoplus_{k\in\bbZ}
		\Gr^W_{n-k} \cLog^N(D)\otimes \Gr^W_{k} \Omega^\bullet_{\ol X}(D^\bsmu) 
		=\bigoplus_{k\in\bbZ}\Gr^W_{n-k} \bbLog^N_\bbC\otimes \Gr^W_{k} \Omega^\bullet_{\ol X}(D^\bsmu) 
		\\
		\Gr^W_n (\sLog^N(D)\otimes \sE^\bullet_{\ol X}(D^\bsmu) ) &= \bigoplus_{k\in\bbZ}
		\Gr^W_{n-k} \sLog^N(D)\otimes \Gr^W_{k} \sE^\bullet_{\ol X}(D^\bsmu)
		=\bigoplus_{k\in\bbZ}\Gr^W_{n-k} \bbLog^N_\bbC\otimes \Gr^W_{k} \sE^\bullet_{\ol X}(D^\bsmu).
	\end{align*}
	Moreover, since the filtration $F^\bullet$ on $\Gr^W_{n-k} \bbLog^N_\bbC$ is a filtration by $\bbC$-local systems,
	 we see that
	\begin{align*}
		\Gr^p_F \Gr^W_n (\cLog^N(D)\otimes \Omega^\bullet_{\ol X}(D^\bsmu) ) 
		&= \bigoplus_{k,q\in\bbZ} \Gr^{p-q}_F \Gr^W_{n-k}\bbLog^N_\bbC \otimes \Gr^W_{k} \Omega^q_{\ol X}(D^\bsmu)
		\\
		\Gr^p_F \Gr^W_n (\sLog^N(D)\otimes \sE^{\bullet}_{\ol X}(D^\bsmu) ) 
		&= \bigoplus_{k,q\in\bbZ}\Gr^{p-q}_F \Gr^W_{n-k}\bbLog^N_\bbC \otimes \Gr^W_{k} \sE^{q,\bullet}_{\ol X}(D^\bsmu).
	\end{align*}
	 Since $\Gr^{p-q}_F \Gr^W_{n-k}\bbLog^N_\bbC$ are constant $\bbC$-local systems on $\ol X$, our assertion follows from Proposition \ref{prop: Burgos 2.3}.
\end{proof}

By Theorem \ref{thm: unipotent Bu}, we see that the complex $\sLog^N(D)\otimes \sE^\bullet_{\ol X,\bbR}(D^\bullet)$ may be used to calculate the mixed $\bbR$-Hodge structure $H^m(U, \bbLog^N)$.
This implies in particular that
\[
	\bigl((\sLog^N(D)\otimes \sE^\bullet_{\ol X,\bbR}(D^\bullet), W_\bullet),  (\sLog^N(D)\otimes \sE^\bullet_{\ol X}(D^\bullet), W_\bullet, F^\bullet), \id\bigr)
\]
is a cohomological mixed $\bbR$-Hodge complex on $\ol X$ in the sense of \cite{Del74}*{(8.1.6)}.  
Furthermore,  by Lemma \ref{lem: acyclic}, $\sLog^N(D)\otimes \sE^\bullet_{\ol X,\bbR}(D^\bullet)$ as well as $\Gr^p_F \Gr^W_n$ 
are complexes of sheaves acyclic with respect to the global section functor, we may simply take the global section to calculate the corresponding
cohomology groups and their filtrations.

\begin{definition}\label{def: complex}
	We let
	\begin{align*}
		R^\bullet(U, \bbLog^N)&:= \Gamma(\ol X, \sLog^N(D)\otimes \sE^\bullet_{\ol X,\bbR}(D^\bullet)), &
		R^\bullet(U, \bbLog^N_\bbC)&:=R^\bullet(U, \bbLog^N)\otimes\bbC
	\end{align*}
	with filtrations $\wt W_\bullet$ and $F^\bullet$ defined by
	\begin{align*}
		\wt W_n R^\bullet(U, \bbLog^N)&:= \Gamma(\ol X, W_n(\sLog^N(D)\otimes \sE^\bullet_{\ol X,\bbR}(D^\bullet))),  \\
		F^p R^\bullet(U, \bbLog^N_\bbC)&:= \Gamma(\ol X, F^p(\sLog^N(D)\otimes \sE^\bullet_{\ol X}(D^\bullet))).
	\end{align*}
	Furthermore, we let $W_\bullet:=  \Dec(\wt W)_\bullet$ be the d\'ecalage of $\wt W_\bullet$ in the sense of \cite{Del71}*{(1.3.3)}, given by
	\[
		W_n R^m(U, \bbLog^N):= \Ker\bigl(d^m:  \wt W_{n-m} R^{m}(U, \bbLog^N) \rightarrow R^{m+1}(U, \bbLog^N)/\wt W_{n-m-1} R^{m+1}(U, \bbLog^N)\bigr).
	\]	
\end{definition}
By \cite{Del74}*{(8.1.9)},  the spectral sequences for the filtrations $W_\bullet$ and $F^\bullet$ degenerates at $E_1$.
This implies in particular that the filtrations $W_\bullet$ and $F^\bullet$ are strictly compatible with the differential maps,
hence
\begin{align}
	W_n H^m( R^\bullet(U, \bbLog^N)) &=H^m( W_n R^\bullet(U, \bbLog^N)), \label{eq: weight2}\\
	\notag	F^q H^m( R^\bullet(U, \bbLog^N)) &=H^m(F^q R^\bullet(U, \bbLog^N)).
\end{align}
By the property of the d\'ecalage, we have
\[
	W_n H^m( R^\bullet(U, \bbLog^N)) = \wt W_{n-m} H^m(R^\bullet(U, \bbLog^N)).
\]

\begin{corollary}\label{cor: 49}
	The complex $(R^\bullet(U, \bbLog^N), W_\bullet, F^\bullet)$ given above calculates the mixed $\bbR$-Hodge structure
	$H^m(U,\bbLog^N)$ given in Proposition \ref{prop: MHS}.
\end{corollary}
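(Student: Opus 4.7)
The plan is to assemble the pieces that have already been established. First, by Proposition \ref{prop: real structure} (together with the acyclicity of the fine sheaves involved), taking global sections of $\sLog^N(D)\otimes\sE^\bullet_{\ol X,\bbR}(D^\bullet)$ computes the cohomology $H^m(U,\bbLog^N)$; that is, $H^m(R^\bullet(U,\bbLog^N))\cong H^m(U,\bbLog^N)$ as $\bbR$-vector spaces. What is left is to identify the filtrations $W_\bullet$ and $F^\bullet$ on this cohomology with the ones coming from Proposition \ref{prop: MHS}.

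Next, I would invoke Theorem \ref{thm: unipotent Bu}: the inclusion of complexes provides a bifiltered quasi-isomorphism
\[
  (\cLog^N(D)\otimes\Omega^\bullet_{\ol X}(D^\bullet),W_\bullet,F^\bullet)\xrightarrow{\cong}(\sLog^N(D)\otimes\sE^\bullet_{\ol X}(D^\bullet),W_\bullet,F^\bullet).
\]
As noted in \S\ref{section: polylogarithm}, the filtrations on the left-hand side induce, via the hypercohomology identification $\bbH^m(\ol X,\cLog^N(D)\otimes\Omega^\bullet_{\ol X}(D^\bullet))\cong H^m(U,\bbLog^N_\bbC)$, exactly the weight and Hodge filtrations of the mixed Hodge structure of Proposition \ref{prop: MHS} (modulo the standard shift $W_\bullet=\wt W_\bullet[m]$). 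By Lemma \ref{lem: acyclic}, each $W_n\sLog^N(D)\otimes\sE^\bullet_{\ol X,\bbR}(D^\bullet)$ and each $F^p(\sLog^N(D)\otimes\sE^\bullet_{\ol X}(D^\bullet))$ is acyclic for $\Gamma(\ol X,-)$, so these filtered hypercohomologies can be computed by taking global sections; in other words, the quasi-isomorphism above descends to a bifiltered quasi-isomorphism of complexes of global sections.

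Consequently, the filtration $\wt W_\bullet$ on $H^m(R^\bullet(U,\bbLog^N))$ defined in Definition \ref{def: complex} matches the filtration $\wt W_\bullet$ on $H^m(U,\bbLog^N)$ produced in Proposition \ref{prop: MHS}, and $F^\bullet$ matches $F^\bullet$. To conclude, one must check that the décalage recovers the correct weight filtration: since
\[
  \bigl((\sLog^N(D)\otimes\sE^\bullet_{\ol X,\bbR}(D^\bullet),W_\bullet),(\sLog^N(D)\otimes\sE^\bullet_{\ol X}(D^\bullet),W_\bullet,F^\bullet),\id\bigr)
\]
is a cohomological mixed $\bbR$-Hodge complex in the sense of \cite{Del74}*{(8.1.6)}, the spectral sequence degenerates at $E_1$ and the general décalage formalism gives $W_n H^m(R^\bullet(U,\bbLog^N))=\wt W_{n-m}H^m(R^\bullet(U,\bbLog^N))$, i.e.\ the shift $W_\bullet=\wt W_\bullet[m]$ that appears in Proposition \ref{prop: MHS}. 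The main obstacle, such as it is, is purely bookkeeping: carefully tracing through the various filtrations (especially the interaction of the Čech degree with the weight filtration on $\Omega^\bullet_{\ol X}(D^\bullet)$, and the décalage shift) to verify that the identifications really do match. No new geometric input is needed beyond Theorem \ref{thm: unipotent Bu} and Lemma \ref{lem: acyclic}.
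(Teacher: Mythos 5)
Your proposal is correct and follows essentially the same route as the paper's own proof: Proposition \ref{prop: real structure} for the underlying isomorphism, Theorem \ref{thm: unipotent Bu} together with Lemma \ref{lem: acyclic} to match the filtrations $\wt W_\bullet$ and $F^\bullet$, and the d\'ecalage identity $W_\bullet=\wt W_\bullet[m]$ to conclude. The extra bookkeeping you flag is exactly what the paper has already set up in the discussion preceding Definition \ref{def: complex}, so nothing further is needed.
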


\begin{proof}
	By Proposition \ref{prop: real structure}, we have an isomorphism
	\[
		H^m(R^\bullet(U, \bbLog^N)) \cong H^m(U,\bbLog^N),
	\]
	and by Theorem \ref{thm: unipotent Bu}, the filtrations induced from $\wt W_\bullet$ and $F^\bullet$ on 
	$R^\bullet(U, \bbLog^N)$ corresponds through this isomorphism to the filtrations $\wt W_\bullet$ and $F^\bullet$ on $H^m(U,\bbLog^N)$.
	Our assertion follows from $W_\bullet = \wt W_\bullet[m]$. 
\end{proof}

As a variant of $R^\bullet(U, \bbLog^N)$, we let
\begin{align*}
	R^\bullet(U, \bbLog^N(n))&:= \Gamma(\ol X, \sLog^N(D)(n)\otimes \sE^\bullet_{\ol X,\bbR}(D^\bullet)), &
	R^\bullet(U, \bbLog^N_\bbC(n)) &:= R^\bullet(U, \bbLog^N(n)) \otimes\bbC
\end{align*}
for any integer $n\in\bbZ$, where $\sLog^N(D)(n):= \sLog^N(D) \otimes \bbR(n)$ with the corresponding weight and Hodge filtrations.
This filtered complex calculates the mixed $\bbR$-Hodge structure $H^m(U, \bbLog^N(n))$.

The geometric polylogarithm class is given explicitly as follows.
Through the map 
\[
	\cLog^N(D)(g) \otimes \Omega_{\ol X}^g(D^{[g]}) \rightarrow \cLog^N(D)(g) \otimes \sE^{g}_{\ol X}(D^{[g]}),
\]
the differential form
\[
	\xi_N = \ul\omega^{\boldsymbol 0}(g) \otimes \frac{dt_1}{t_1-1} \wedge\cdots\wedge\frac{dt_g}{t_g-1}
\]
of \eqref{eq: differential form} giving the geometric polylogarithm class $[\xi_N]$ in $H^1(U, \bbLog^N(g))$ defines an element 
\[
	\xi_N \in R^{2g-1}(U, \bbLog^N_\bbC(g))= \Gamma(\ol X, \sLog^N(D)(g) \otimes \sE^{g}_{\ol X}(D^{[g]})).
\]
Note that the weight and Hodge filtrations on  $R^{2g-1}(U, \bbLog^N_\bbC(g))$ are given by
\begin{align*}
	W_n R^{2g-1}(U, \bbLog^N_\bbC(g)) &= \wt W_{n-2g+1}R^{2g-1}(U, \bbLog^N_\bbC(g))  = \Gamma(\ol X, W_{n-g}(\sLog^N(D)(g) \otimes \sE^{g}_{\ol X}(D^{[g]}))),  \\
	F^p R^{2g-1}(U, \bbLog^N_\bbC(g)) &=  \Gamma(\ol X, F^p(\sLog^N(D)(g) \otimes \sE^{g}_{\ol X}(D^{[g]}))).
\end{align*}
Since $\xi_N$ is an element in both
\begin{align*}
	 \Gamma(\ol X, W_{-2g} \sLog^N(D)(g) \otimes W_g \sE^{g}_{\ol X}(D^{[g]})) &\subset W_0 R^{2g-1}(U, \bbLog^N_\bbC(g)),\\
	 \Gamma(\ol X, F^{-g} \sLog^N(D)(g) \otimes F^g \sE^{g}_{\ol X}(D^{[g]})) &\subset  F^0R^{2g-1}(U, \bbLog^N_\bbC(g)),
\end{align*}
we see that 
$
	\xi_N \in (F^0 \cap W_0)  R^{2g-1}(U, \bbLog^N_\bbC(g)),
$
hence the class $[\xi_N]$ satisfies
\[
	[\xi_N] \in (F^0 \cap W_0) H^{2g-1}(U, \bbLog^N_\bbC(g)).
\]

%
%
%
%
\section{Absolute Polylogarithm}\label{section: main theorem}
%
%
%
%

The purpose of this section is to define and give an explicit construction of the absolute polylogarithm class
\[
	\pol \in H^{2g-1}_\sA(U, \bbLog(g)).
\]
We start with the definition of the absolute Hodge cohomology $H^{2g-1}_\sA(U, \bbLog(g))$.

%
%
\subsection{The Absolute Polylogarithm Class}
%
%

We let
\[
	R^\bullet(U, \bbLog^N(g))= \Gamma(\ol X, \sLog^N(D)(g)\otimes \sE^\bullet_{\ol X,\bbR}(D^\bullet))
\]
be the logarithmic \v Cech-Dolbeault complex of \S \ref{subsection: MHS log}.   

\begin{definition}\label{def: AHC}	
	We define $R^\bullet_\sA(U, \bbLog^N(g))$ to be the complex
	\begin{align*}
		 \Cone\Bigl(W_0 R^\bullet(U, \bbLog^N(g)) \oplus  (F^0 \cap W_0)R^\bullet(U, \bbLog^N_\bbC(g)) 	\rightarrow
		W_0 R^\bullet(U, \bbLog^N_\bbC(g))  \Bigr)[-1],
	\end{align*}
	where the map is defined by $(x,y) \mapsto x-y$.  Then we define
	the absolute Hodge cohomology $H^m_\sA(U,\bbLog^N(g))$ of $U$ with coefficients in $\bbLog^N(g)$ by
	\begin{equation}\label{eq: AHC}
		H^m_\sA(U,\bbLog^N(g))\coloneqq H^m(R^\bullet_\sA(U, \bbLog^N(g))).
	\end{equation}
	In addition, we let
	\[
		H^m_\sA(U,\bbLog(g))\coloneqq \varprojlim_N H^m_\sA(U,\bbLog^N(g)).
	\]
\end{definition}

\begin{remark}
	By Corollary \ref{cor: 49}, the complex $R^\bullet(U, \bbLog^N(g))$ is a complex of filtered modules which
	gives the mixed $\bbR$-Hodge structure on the cohomology of $U$ with coefficients in $\bbLog^N(g)$.
	In fact, the complex $R^\bullet(U, \bbLog^N(g))$ gives a Hodge complex, and hence 
	an object in the derived category $D^b(\MHS_\bbR)$ via Beilinson's equivalence of categories \cite{Bei86}*{Theorem 3.4}.
	Then the absolute Hodge cohomology \eqref{eq: AHC} may be interpreted as
	\[
		H^m_\sA(U,\bbLog^N(g)) = \Hom_{D^b(\MHS_\bbR)}(\bbR(0),R^\bullet(U, \bbLog^N(g))[m]).
	\]
\end{remark}
By definition of the cone, we have a long exact sequence
\begin{align*}
	\cdots  &\rightarrow W_0 H^{m-1}(U, \bbLog^N(g))  
	\oplus (F^0 \cap W_0)H^{m-1}(U, \bbLog^N_\bbC(g))  \rightarrow W_0 H^{m-1}(U, \bbLog^N_\bbC(g))\\ 
	&\rightarrow H^m_\sA(U,\bbLog^N(g))\\
		 &\rightarrow  W_0 H^m(U, \bbLog^N(g))  \oplus (F^0 \cap W_0)H^m(U, \bbLog^N_\bbC(g)) 
		 \rightarrow W_0 H^m(U, \bbLog^N_\bbC(g)) \rightarrow \cdots.
\end{align*}
Note that for any mixed $\bbR$-Hodge structure $V \coloneqq (V, W_\bullet,F^\bullet)$, the extension groups in $\MHS_\bbR$ are
calculated as 
\begin{equation}\label{eq: Ext}
	\Ext^m_{\MHS_\bbR}(\bbR(0), V) =
	\begin{cases}
		\Ker(W_0 V \oplus (F^0\cap W_0)V_\bbC \rightarrow W_0V_\bbC)  & m=0,\\
		\Coker(W_0 V \oplus (F^0\cap W_0)V_\bbC \rightarrow W_0V_\bbC)  & m=1,\\
		0  &  \text{otherwise},
	\end{cases}
\end{equation}
where the map is defined by $(x,y) \mapsto x-y$.
Thus we obtain the short exact sequence
\begin{multline}\label{eq: SES second}
	0 \rightarrow \Ext^1_{\MHS_\bbR}(\bbR(0), H^{m-1}(U, \bbLog^N(g)))\\  \rightarrow H^m_\sA(U, \bbLog^N(g)) \rightarrow \Hom_{\MHS_\bbR}(\bbR(0), H^{m}(U, \bbLog^N(g)) )
	\rightarrow 0
\end{multline}
for any $m \in \bbZ$.

The following proposition is crucial in defining the polylogarithm class.

\begin{proposition}\label{prop: isomorphism}
	We have a canonical isomorphism
	\[	
		H^{2g-1}_\sA(U, \bbLog(g)) \cong \Hom_{\MHS_\bbR}(\bbR(0), H^{2g-1}(U,\bbLog(g))).
	\]
	Moreover, we have
	\[
		H^{2g-1}_\sA(U, \bbLog(g)) \cong \begin{cases}
		\bbR & g>1 \\ \bbR\oplus\bbR &g=1.\end{cases}
	\]
\end{proposition}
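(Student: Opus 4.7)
The plan is to apply the short exact sequence \eqref{eq: SES second} at each finite level $N$, verify that the $\Ext^1$ term on the left vanishes in each of the relevant cases, and then pass to the inverse limit to obtain both parts of the assertion.

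Concretely, for each $N \geq 0$, the sequence \eqref{eq: SES second} in degree $m = 2g-1$ takes the form
\[
0 \to \Ext^1_{\MHS_\bbR}(\bbR(0), H^{2g-2}(U, \bbLog^N(g))) \to H^{2g-1}_\sA(U, \bbLog^N(g)) \to \Hom_{\MHS_\bbR}(\bbR(0), H^{2g-1}(U, \bbLog^N(g))) \to 0.
\]
I would verify the vanishing of the leftmost group by a case split on $g$ using Lemma \ref{lemma: Log on U}. For $g > 2$, the integer $2g-2$ lies strictly between $g$ and $2g-1$, so $H^{2g-2}(U, \bbLog^N(g)) = 0$ directly. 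For $g = 2$, Lemma \ref{lemma: Log on U} combined with Lemma \ref{lemma: Log on X} gives $H^{2}(U, \bbLog^N(2)) \cong \bbR(0)$, and Carlson's formula \eqref{eq: Ext} yields $\Ext^1_{\MHS_\bbR}(\bbR(0), \bbR(0)) = 0$. For $g = 1$, Lemma \ref{lemma: Log on U} gives $H^0(U, \bbLog^N(1)) = 0$.

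Thus at each finite level the induced map
\[
H^{2g-1}_\sA(U, \bbLog^N(g)) \xrightarrow{\cong} \Hom_{\MHS_\bbR}(\bbR(0), H^{2g-1}(U, \bbLog^N(g)))
\]
is an isomorphism. Since $\Hom_{\MHS_\bbR}(\bbR(0), -)$ commutes with inverse limits in its second argument, passing to $\varprojlim_N$ yields the first claimed isomorphism. For the explicit dimension count, I would then invoke Proposition \ref{prop: Log on U}: when $g > 1$, $H^{2g-1}(U, \bbLog(g)) = \prod_{k \geq 0} \Sym^k \bbR(\bone)$ is a product of pure Hodge structures with the $k$-th factor of weight $-2k$; since morphisms between pure Hodge structures of distinct weights vanish, only the $k=0$ summand contributes, giving $\bbR$. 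When $g = 1$, $H^1(U, \bbLog(1)) = \bbR(0) \oplus \prod_{k \geq 0} \bbR(k)$ contributes one copy of $\bbR$ from the isolated summand and one from the $k=0$ factor of the product, all other factors being pure of strictly negative weight.

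The only delicate point I foresee is the inverse limit step, but it causes no trouble here: already at finite level, the image of the $\Hom$ functor depends only on the weight-zero part of $H^{2g-1}(U, \bbLog^N(g))$, which is determined by the $k = 0$ summand alone and hence is independent of $N$. The pro-system is therefore essentially constant, so the limit commutes with $\Hom$ and no $\varprojlim^1$ obstruction arises.
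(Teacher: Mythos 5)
Your proposal is correct and follows essentially the same route as the paper: apply the short exact sequence \eqref{eq: SES second} in degree $2g-1$, kill the $\Ext^1$ term using the cohomology computations of Lemmas \ref{lemma: Log on X} and \ref{lemma: Log on U} together with the formula \eqref{eq: Ext} for extensions of Tate objects, compute the $\Hom$ term by weight reasons, and pass to the (essentially constant) inverse limit. Your explicit case split on $g$ and the remark that no $\varprojlim^1$ obstruction arises are just slightly more detailed versions of what the paper leaves implicit.
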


\begin{proof}
	By Lemma \ref{lemma: Log on U}, we have 
	\begin{align*}
		H^{2g-2}(U, \bbLog^N(g))&=
		\begin{cases}
			0  &  g\neq2\\
			\bbR(0) & g=2.
		\end{cases},&
		H^{2g-1}(U, \bbLog^N(g))&=
		\begin{cases}
			\prod_{k=0}^N \Sym^k\bbR(\bone)  &  g\neq 1\\
			\bbR(0)\oplus\prod_{k=0}^N \Sym^k\bbR(\bone)  &  g= 1.
		\end{cases}
	\end{align*} 
	On the other hand, by \eqref{eq: Ext}, we have
	\begin{align*}
		\Hom_{\MHS_\bbR}(\bbR(0), \bbR(n))& \cong
		\begin{cases}
			\bbR  & n=0,\\
			0  &  n \neq 0,
		\end{cases}
		&
		\Ext^1_{\MHS_\bbR}(\bbR(0), \bbR(n))& \cong
		\begin{cases}
		\bbC/(2\pi i)^n \bbR  & n>0,\\
		0 & n \leq 0,
		\end{cases}
	\end{align*}
	and $\Ext^m_{\MHS_\bbR}(\bbR(0),\bbR(n))=0$ if $m\neq 0,1$.
	Our assertion follows from \eqref{eq: SES second} and by passing to the limit.
\end{proof}

The absolute polylogarithm class for $X$ is defined as follows.

\begin{definition}\label{def: polylogarithm}
	We define the \emph{absolute polylogarithm class} $\pol$ to be the class
	\[
		\pol =(\pol_N)\in H^{2g-1}_\sA(U,  \bbLog(g))=\varprojlim_NH^{2g-1}_\sA(U,  \bbLog^N(g))
	\]
	which maps to the geometric polylogarithm class
	in $\Hom_{\MHS_\bbR}(\bbR(0), H^{2g-1}(U,  \bbLog(g)))$
	of Remark \ref{rem: geometric}
	through the isomorphism of Proposition \ref{prop: isomorphism}.
\end{definition}

\begin{remark}
	When $g>1$, then $\pol_N\in H^{2g-1}_\sA(U, \bbLog^N(g))$ is characterized as the class
	which maps to $1$ through the isomorphism
	\[
		 H^{2g-1}_\sA(U, \bbLog^N(g)) \cong  \Hom_{\MHS_\bbR}(\bbR(0), H^{2g-1}(U, \bbLog^N(g)) )
		 \overset{\res_1}{\cong} \Hom_{\MHS_\bbR}(\bbR(0), H^{0}(Z, \bbLog^N) )
		 \cong \bbR. 
	\]
\end{remark}

In what follows, let
\begin{align*}
	R^{\bullet}(U, \bbLog(g))&\coloneqq \varprojlim_N R^{\bullet}(U, \bbLog^N(g)),  &
	R^{\bullet}_\sA(U, \bbLog(g))&\coloneqq \varprojlim_N R_\sA^{\bullet}(U, \bbLog^N(g)).
\end{align*}
The filtrations $W_\bullet$ and $F^\bullet$ on $R^{\bullet}(U, \bbLog^N(g))$ induce
filtrations $W_\bullet$ and $F^\bullet$ on $R^{\bullet}(U, \bbLog(g))$.

\begin{proposition}\label{prop: Klingon}
	For any integer $m\geq 0$, we have
	\begin{align*}
		H^m(R^{\bullet}(U, \bbLog(g)))&\cong H^m(U,\bbLog(g)).
	\end{align*}
	Moreover, for any integer $n\in\bbZ$, we have
	\[
		H^m\biggl( \varprojlim_N W_n R^{\bullet}(U, \bbLog^N(g))\biggr) \cong \varprojlim_NH^m\bigl(W_nR^{\bullet}(U, \bbLog^N(g))\bigr).
	\]	
\end{proposition}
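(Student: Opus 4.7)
The plan is to deduce both isomorphisms from the Milnor $\varprojlim{}^1$ exact sequence for inverse limits of complexes with surjective transition maps, and then to verify that the relevant $\varprojlim{}^1$ terms vanish by finite-dimensionality.

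First I would establish that the transition maps
\[
R^m(U, \bbLog^N(g)) \to R^m(U, \bbLog^{N-1}(g)),\qquad W_n R^m(U, \bbLog^N(g)) \to W_n R^m(U, \bbLog^{N-1}(g))
\]
are surjective for every $m$, $n$, and $N$. The short exact sequence \eqref{eq: SES Log^N Bu} is strictly compatible with the weight filtration, since the kernel $\Sym^N\bbR(\bone)\otimes\sE_{\ol X,\bbR}$ is the top graded piece of $\sLog^N(D)$, realized as the $\sE_{\ol X,\bbR}$-span of the generators $\ul e^\bsk$ with $|\bsk|=N$ (Definition \ref{def: sLog}). Tensoring with $\sE^\bullet_{\ol X,\bbR}(D^\bullet)$ and passing to the \v Cech totalization preserves this strict compatibility, and by fineness of $\sE_{\ol X,\bbR}$-modules (Lemma \ref{lem: acyclic} together with an obvious induction along the finite filtration $W_\bullet$) the sequence remains short exact after applying $\Gamma(\ol X,-)$.

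Next I would invoke the Milnor exact sequence: for any tower of complexes $\{C^\bullet_N\}$ with surjective transition maps in each degree,
\[
0 \to \varprojlim{}^1_N H^{m-1}(C^\bullet_N) \to H^m\Bigl(\varprojlim_N C^\bullet_N\Bigr) \to \varprojlim_N H^m(C^\bullet_N) \to 0
\]
is exact. Applying this to $C^\bullet_N=R^\bullet(U,\bbLog^N(g))$ and to $C^\bullet_N=W_n R^\bullet(U,\bbLog^N(g))$ reduces both parts of the proposition to showing that $\varprojlim{}^1$ of the corresponding cohomology towers vanishes; the first isomorphism then combines with Corollary \ref{cor: 49} and the definition of $H^m(U,\bbLog(g))$ to give the claim. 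For the vanishing, Corollary \ref{cor: 49} and the strictness relation \eqref{eq: weight2} yield
\[
H^{m-1}(R^\bullet(U, \bbLog^N(g))) \cong H^{m-1}(U,\bbLog^N(g)),\qquad H^{m-1}(W_n R^\bullet(U, \bbLog^N(g))) = W_n H^{m-1}(U,\bbLog^N(g)),
\]
and Lemma \ref{lemma: Log on U} shows that each $H^{m-1}(U,\bbLog^N(g))$ is a finite-dimensional $\bbR$-vector space. Any tower of finite-dimensional vector spaces satisfies the Mittag-Leffler condition automatically (a descending chain of subspaces of a finite-dimensional space must stabilize), so $\varprojlim{}^1=0$ in both cases.

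The main technical subtlety lies in the first step: strict compatibility of \eqref{eq: SES Log^N Bu} with $W_\bullet$ must be preserved through the tensor product with $\sE^\bullet_{\ol X,\bbR}(D^\bullet)$ and the \v Cech totalization, where the weight filtration is a convolution. In practice this follows directly from the explicit basis $\{\ul e^\bsk\}$ of $\sLog^N(D)$, which exhibits $\sLog^N(D)\to\sLog^{N-1}(D)$ as killing precisely the top-weight summand and allows one to lift a given weight-$n$ element of the target to a weight-$n$ element of the source term by term.
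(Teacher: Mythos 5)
Your proof is correct and follows essentially the same route as the paper: both reduce the statement to Mittag--Leffler conditions on the terms and on the cohomology of the towers (the paper cites \cite{KS94}*{Proposition 1.12.4}, which is the $\varprojlim{}^1$ argument you spell out), and both get the cohomological Mittag--Leffler condition from Lemma \ref{lemma: Log on U} together with the strictness relation \eqref{eq: weight2}. The only cosmetic difference is that the paper obtains surjectivity of the term-level transition maps directly from the explicit $\sE_{\ol X,\bbR}$-module splitting $\sLog^{N+1}(D)=\sLog^N(D)\oplus W_{-2N-2}\sLog^{N+1}(D)$ (so the maps are split surjective and no fineness argument is needed), whereas you derive it from the short exact sequence \eqref{eq: SES Log^N Bu} plus acyclicity.
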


\begin{proof}	
	We start by proving the first isomorphism, that is
	\[
		H^m\biggl( \varprojlim_N R^{\bullet}(U, \bbLog^N(g))\biggr) \cong \varprojlim_NH^m\bigl(R^{\bullet}(U, \bbLog^N(g))\bigr).
	\]
	By \cite{KS94}*{Proposition 1.12.4}, it is sufficient to check that the systems $\bigl(R^m(U,\bbLog^N(g))\bigr)_N$
	and $\bigl(H^{m}(R^\bullet(U,\bbLog^N(g)))\bigr)_N$  satisfy the Mittag-Leffler condition for any $m\in\bbZ$.
	For $\bigl(R^m(U,\bbLog^N(g))\bigr)_N$, recall the definition of $\sLog^N(D)$ given in Definition \ref{def: sLog}.
	As an $\sE_{\ol X,\bbR}$-module, we have a splitting
	\begin{equation}\label{eq: Klingon}
		\sLog^{N+1}(D) = \sLog^{N}(D) \oplus W_{-2N-2}\sLog^{N+1}(D),
	\end{equation}
	and the projection $\sLog^{N+1}(D) \rightarrow  \sLog^{N}(D)$ is simply the projection to the first component.
	This implies that
	\[
		\sLog^{N+1}(D)(g)\otimes \sE^p_{\ol X,\bbR}(D^q)\rightarrow \sLog^N(D)(g)\otimes \sE^p_{\ol X,\bbR}(D^q)
	\]
	is split surjective, hence the induced morphism $R^m(U,\bbLog^{N+1}(g))\rightarrow R^m(U,\bbLog^{N}(g))$ is also split surjective.
	The Mittag-Leffler condition for $\bigl(H^{m}(R^\bullet(U,\bbLog^N(g)))\bigr)_N$ follows from Lemmas \ref{lemma: Log on X} and \ref{lemma: Log on U}.

	We prove the second isomorphism in a similar way.	
	Since the splitting \eqref{eq: Klingon} is compatible with the filtration $W_\bullet$,
	the system $\bigl(W_n R^m(U,\bbLog^N(g))\bigr)_N$ satisfies the Mittag-Leffler condition.
	On the other hand, the Mittag-Leffler condition for $\bigl(H^m(R^\bullet(U,\bbLog^N(g)))\bigr)_N$
	shown above implies that for $\bigl(W_n H^m(R^\bullet(U,\bbLog^N(g)))\bigr)_N$, since the morphisms of mixed $\bbR$-Hodge
	structures are strictly compatible with the filtrations.
	By \eqref{eq: weight2}, the latter system is equal to $\bigl(H^m(W_n R^\bullet(U,\bbLog^N(g)))\bigr)_N$,
	hence we obtain our assertion.
\end{proof}

In the following subsections, we will construct an explicit cocycle giving the polylogarithm class.
More precisely, we will define a triple
\[
	(\alpha,\eta,\xi)\in  W_0R^{2g-2}(U, \bbLog_\bbC(g)) \oplus W_0R^{2g-1}(U, \bbLog(g))
	 \oplus  (F^0 \cap W_0)R^{2g-1}(U, 	\bbLog_\bbC(g)),
\]
which satisfies:
\begin{enumerate}
	\item The cocycle conditions $d\alpha=\eta-\xi$, $d\eta=0$ and $d\xi=0$.
	\item $\xi=(\xi_N)$, where $\xi_N$ are the differential forms of \eqref{eq: differential form} giving the geometric polylogarithm class.
\end{enumerate}

The first condition insures that $(\alpha,\eta,\xi)$ defines a class in $H^{2g-1}_\sA(U, \bbLog(g))$,
and the second implies that this class gives the absolute polylogarithm class $\pol$.

%
%
\subsection{Polylogarithm Function and the Case $\boldsymbol{g=1}$}
%
%

We first review the definition of the classical polylogarithm function.
The polylogarithm function, defined by the power series
\[
	\Li_k(t):=\sum_{n=1}^\infty \frac{t^n}{n^k}, \qquad (t\in\bbC, |t|<1, k\in\bbN),
\]
may be extended analytically to $\bbC\setminus[1,\infty)$ using the integration
\[
	\Li_{k+1}(t) = \int_{0}^t  \Li_k(u) \frac{du}{u}
\]
for any integer $k\geq 0$.  

For $m\geq0$ and $|t| < 1$, let
\begin{align}\label{eq: L}
	L_{m+1}(t):= \sum_{k=0}^{m} \frac{(-\log|t|)^{m-k}}{(m-k)!} \Li_{k+1}(t).
\end{align}

In \cite{Zag90}*{\S1}, Zagier defined the Bloch-Wigner-Ramakrishnan polylogarithm functions $D^\circ_m(t)$ by
\[
	D^\circ_m(t):=
	\begin{cases}
		\displaystyle\Im(L_m(t)) &  \text{($m$: even)},\\
		\displaystyle\Re(L_m(t)) +\frac{(\log|t|)^m}{(2m)!}&  \text{($m$: odd)}.
	\end{cases}
\]
In this article, we take a slightly different normalization as follows:
\begin{definition}\label{definition: BWR}
	For any $m\in\bbN$, we define the \emph{Bloch-Wigner-Ramakrishnan polylogarithm function} $D_m(t)$ by
	\begin{equation}\label{eq: Ramakrishnan}
		D_m(t):= \Im(i^m L_m(t)).
	\end{equation}
\end{definition}

The relations between the normalizations are
\[
	D_m(t)=\Im(i^m L_m(t)) = (-1)^n D^\circ_m(t)
\]
if $m=2n$ is even, and
\[
	D_m(t)=	\Im(i^m L_m(t))= (-1)^n \Re(L_m(t))=  (-1)^n D^\circ_m(t) - (-1)^n \frac{(\log|t|)^m}{(2m)!}
\]
if $m=2n+1$ is odd.
Then the functions $D_m(t)$ satisfy the following functional equations.
\begin{proposition}[\cite{Zag90}*{Proposition 1}]\label{prop: Ramakrishnan}
	The function $D_m(t)$ of \eqref{eq: Ramakrishnan}  can be continued to a single-valued real analytic function on $U:=\bbP^1\setminus\{0,1,\infty\}$.
	In particular, it is a function in $\sE_{U,\bbR}$.  The function $D_m(t)$ satisfies the functional equation
	\begin{equation}\label{eq: functional equation}
		D_m(1/t)=
		\begin{cases}
			D_m(t) &  \text{($m$: even)},\\
			D_m(t) + (\log|t|)^m/m!  &  \text{($m$: odd)}.
		\end{cases}
	\end{equation}
\end{proposition}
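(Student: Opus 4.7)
The plan is to treat the three assertions (analytic continuation, single-valuedness with real analyticity, and the functional equation) separately, reducing each to the classical monodromy and inversion properties of $\Li_k$.

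First, I would set up the multivalued analytic continuation of $\Li_k$ on the universal cover of $U$ using the recursion $d\Li_k = \Li_{k-1}\,dt/t$ with base case $\Li_1(t) = -\log(1-t)$. From this one derives by induction on $k$ the monodromy action of $\pi_1(U)$: a loop $\gamma_0$ around $t=0$ fixes each $\Li_k$ but shifts $\log t$ by $2\pi i$, while a simple loop $\gamma_1$ around $t=1$ sends
\[
	\Li_k(t) \longmapsto \Li_k(t) - \frac{2\pi i}{(k-1)!}(\log t)^{k-1},
\]
with $\log t$ unchanged. This determines a unipotent representation of $\pi_1(U)$ on the $\bbQ$-span of $\{\Li_j(t),(\log t)^j\}_{j\leq m}$.

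For single-valuedness I would exploit the crucial observation that $\log|t| = \tfrac12(\log t + \overline{\log t})$ is $\pi_1(U)$-invariant, since $\gamma_0$ shifts the two summands by $\pm 2\pi i$. Substituting the monodromy formulas into $L_m(t)$ and expanding, the variation of $L_m$ under any element of $\pi_1(U)$ becomes an explicit polynomial in $\log t$ whose coefficients lie in $(2\pi i)\cdot\bbR$ times real analytic functions of $t,\bar t$. A direct binomial calculation, using both the $i^m$-twist and the precise coefficients $(-\log|t|)^{m-k}/(m-k)!$, shows that $\Im(i^m\cdot)$ kills this polynomial correction exactly; this is Ramakrishnan's calculation and is what forces the normalization \eqref{eq: L} and the prefactor $i^m$. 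Real analyticity is immediate on the cut plane $\bbC\setminus[1,\infty)$, where $\Li_k$ has a single-valued holomorphic branch and $\log|t|$ is real analytic on $U$; it extends across the cut by the single-valuedness just established and the identity principle.

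For the functional equation I would apply Jonqui\`ere's classical inversion formula
\[
	\Li_k(1/t) = (-1)^{k-1}\Li_k(t) - \frac{(2\pi i)^k}{k!}B_k\!\left(\tfrac12 + \frac{\log(-t)}{2\pi i}\right)
\]
to each summand of $L_m(1/t)$, using $\log|1/t| = -\log|t|$. Collecting terms one obtains $L_m(1/t) = (\pm 1)L_m(t) + Q_m(t)$, where $Q_m(t)$ is a Bernoulli-polynomial expression in $\log(-t)$ and $\log|t|$. Applying $\Im(i^m\cdot)$ then yields the claimed identity: for one parity the Bernoulli corrections project to zero, and for the other only the monomial $(\log|t|)^m/m!$ survives.

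The main technical obstacle is the monodromy cancellation, i.e., the combinatorial identity in step two that shows $\Im(i^m\cdot)$ annihilates the $\pi_1(U)$-variation of $L_m$; once this is in hand, analytic continuation and the inversion identity reduce everything else to algebra. Since this is precisely the content of Ramakrishnan's original argument reproduced in \cite{Zag90}*{\S1}, I would cite that source for the detailed bookkeeping rather than redo it here.
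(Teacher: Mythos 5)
The paper gives no proof of this proposition: it is stated as a quotation of \cite{Zag90}*{Proposition 1} (in a slightly different normalization), so there is no internal argument to compare yours against. Your treatment of single-valuedness is correct and is indeed the content of Ramakrishnan's computation: the only nontrivial variation of $L_m$ is under the loop around $t=1$, where
\[
	L_m \longmapsto L_m - 2\pi i\sum_{k=1}^m\frac{(-\log|t|)^{m-k}}{(m-k)!}\,\frac{(\log t)^{k-1}}{(k-1)!}
	= L_m - \frac{2\pi i}{(m-1)!}\bigl(i\,\Im\log t\bigr)^{m-1},
\]
so $i^mL_m$ changes by the real quantity $-\tfrac{2\pi(-1)^m}{(m-1)!}(\Im\log t)^{m-1}$, and $\Im(i^mL_m)$ descends to $U$; real analyticity then follows as you describe.

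The functional-equation step, however, does not close as written. You leave the sign of the main term as ``$(\pm 1)$'' and assert that collecting terms ``yields the claimed identity'', but the sign is forced: $\Li_k(1/t)=(-1)^{k-1}\Li_k(t)+(\text{elementary})$ together with $(-\log|1/t|)^{m-k}=(-1)^{m-k}(-\log|t|)^{m-k}$ gives $L_m(1/t)=(-1)^{m-1}L_m(t)+Q_m(t)$ with $Q_m$ elementary, hence $D_m(1/t)=(-1)^{m-1}D_m(t)+\Im(i^mQ_m)$. For $m$ even the main term is $-D_m(t)$, which is incompatible with the displayed identity $D_m(1/t)=D_m(t)$ unless $D_m$ were elementary. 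Concretely, $D_2(t)=-\bigl(\Im\Li_2(t)+\arg(1-t)\log|t|\bigr)$ is the negative of the Bloch--Wigner dilogarithm, which changes sign under $t\mapsto 1/t$ (its values at $\pm i$ are $\pm G$ with $G$ Catalan's constant), so $D_2(1/t)=-D_2(t)\neq D_2(t)$. Zagier's Proposition~1 reads $D_m(1/x)=(-1)^{m-1}D_m(x)$ in his normalization, and the factor $(-1)^{m-1}$ survives the passage to $\Im(i^mL_m)$; so either \eqref{eq: functional equation} carries a sign slip for $m$ even (and the sign of the $(\log|t|)^m/m!$ term for odd $m\geq 3$ should be rechecked as well), or your ``collecting terms'' would have to produce a sign that the inversion formula does not give. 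The one computation your proposal actually has to perform is exactly the one you defer to the reference, and performing it does not reproduce the statement as printed.
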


We will use the functions $D_m(t)$ given above to explicitly describe the polylogarithm class.
In this subsection, we consider the case $g=1$.
Let $\ol X= \bbP^1$, $D= \{0,\infty\}$ and $Z=\{1\}$.

\begin{theorem}[Case $g=1$]\label{thm: g=1}
	The polylogarithm class $\pol$ is given by the triple $(\alpha, \eta, \xi)$, where 
	\begin{align*}
		\xi&= \ul\omega^0(1) \otimes \frac{dt}{t-1} \in (F^0 \cap W_0)R^1(U,\bbLog_\bbC(1)),\\
		\eta&\coloneqq \Re(\xi) = (\xi + \ol\xi)/2  \in W_0 R^1(U,\bbLog(1)),
	\end{align*}
	and
	\[
		\alpha\coloneqq \sum_{m=0}^\infty D_{m+1}(t) \ul e^m(1) \in W_0R^0(U,\bbLog(1)).
	\]
\end{theorem}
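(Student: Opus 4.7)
The plan is to verify directly that the triple $(\alpha,\eta,\xi)$ satisfies the cocycle conditions of Definition~\ref{def: AHC}, and that the resulting class in $H^1_\sA(U,\bbLog(1))$ maps to the geometric polylogarithm class under the isomorphism of Proposition~\ref{prop: isomorphism}. The argument splits naturally into three parts.

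First I would check the filtration and closure conditions. For $g = 1$ the logarithmic \v Cech-Dolbeault complex $R^\bullet(U,\bbLog^N(1))$ reduces to a single piece indexed by $\bsmu=\{1\}$, so $R^1(U,\bbLog^N_\bbC(1))$ is the space of global sections of $\sLog^N(D)(1)\otimes\sE^1_{\ol X,\bbC}(D\cup Z)$. The form $dt/(t-1)$ is a logarithmic $(1,0)$-form on $\ol X=\bbP^1$ with poles on $D\cup Z=\{0,1,\infty\}$, and $\ul\omega^0$ lies in $F^0W_0\cLog^N(D)$; after Tate-twist accounting this places $\xi\in(F^0\cap W_0)R^1(U,\bbLog^N_\bbC(1))$. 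Since complex conjugation preserves the weight filtration, $\eta=\Re(\xi)\in W_0R^1(U,\bbLog^N(1))$. Both $d\xi$ and $d\ol\xi$ vanish for degree reasons on the $1$-dimensional $\ol X$, so $d\eta=0$.

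The main step is the cocycle identity $d\alpha=\eta-\xi$. Using the connection formula $\nabla(\ul e^m)=\ul e^{m+1}\otimes\Im(dt/t)$ from \eqref{eq: connection e} specialized to $g=1$, I compute
\[
  d\alpha=\sum_{m\geq 0}\bigl(dD_{m+1}(t)+D_m(t)\,\Im(dt/t)\bigr)\,\ul e^m(1),
\]
with the convention $D_0\coloneqq 0$, whereas $\eta-\xi=-\ul e^0(1)\otimes d\log|t-1|$. The identity thus reduces to the base case $dD_1(t)=-d\log|t-1|$, immediate from $D_1(t)=-\log|1-t|$, together with the vanishing of each $\ul e^m(1)$-coefficient for $m\geq 1$. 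The latter rests on the differential identity
\[
  dL_{m+1}(t)=\frac{(-\log|t|)^m}{m!}\,\frac{dt}{1-t}+iL_m(t)\,\Im\frac{dt}{t},
\]
obtained by direct term-by-term differentiation of the defining polynomial expression for $L_{m+1}$ using $d\Li_k(t)=\Li_{k-1}(t)\,dt/t$. Applying $\Im(i^{m+1}\cdot)$ to this identity and exploiting the reality of $\Im(dt/t)$ then yields the required cancellation.

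Finally, to identify the resulting class with $\pol$, I would compute the residue of $[\xi]$ at $t=1$ using the explicit map described near the end of $\S\ref{section: polylogarithm}$. This sends $\ul\omega^0(1)\otimes dt/(t-1)$ to $\ul\omega^0|_{t=1}$, which under the splitting \eqref{eq: splitting} corresponds to the generator $1$ of the $\Sym^0\bbR(\bone)$-component; by Proposition~\ref{prop: isomorphism} and Definition~\ref{def: polylogarithm} this identifies $[(\alpha,\eta,\xi)]$ with $\pol$.

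The main obstacle is the cocycle identity $d\alpha=\eta-\xi$, specifically the term-by-term vanishing of the $\ul e^m(1)$-coefficient for $m\geq 1$: this requires matching the combinatorial structure of the Bloch-Wigner-Ramakrishnan functions $D_m$ against that of the connection on $\sLog(D)(1)$, via a generating-function argument that keeps careful track of real and imaginary parts under $\Im(i^{m+1}\cdot)$.
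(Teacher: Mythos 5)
Your overall strategy (verify the cone conditions directly, then identify the class via the residue at $t=1$) is reasonable, and your closing step matches the paper. But two points in the middle do not hold up as written.

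First, your reduction of $d\alpha=\eta-\xi$ rests on the identity $\eta-\xi=-\ul e^0(1)\otimes d\log|t-1|$, which is false: it conflates $\ul\omega^0(1)$ with $\ul e^0(1)$. By \eqref{eq: base change two} one has $\ul\omega^0=\sum_m \tfrac{(\log|t|)^m}{m!}(-i)^m\,\ul e^m$, so $\eta-\xi=-i\,\Im(\xi)$ has a nonzero $\ul e^m(1)$-component for \emph{every} $m$, namely $\tfrac{(\log|t|)^m}{m!}\,\Im\bigl((-i)^{m+1}\tfrac{dt}{t-1}\bigr)$. Hence the claimed vanishing of $dD_{m+1}(t)+D_m(t)\,\Im\tfrac{dt}{t}$ for $m\geq1$ is wrong; the correct identity, valid for all $m\geq0$, is $dD_{m+1}(t)+D_m(t)\,\Im\tfrac{dt}{t}=\tfrac{(\log|t|)^m}{m!}\,\Im\bigl((-i)^{m+1}\tfrac{dt}{t-1}\bigr)$. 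Your differential identity for $dL_{m+1}$ is correct and, after applying $\Im(i^{m+1}\cdot)$, produces exactly this right-hand side, so the computation is salvageable --- but the verification as you describe it would not close. (The paper sidesteps this bookkeeping by working in the $\ul\omega$-basis: it sets $\wt\alpha=\sum_k(-1)^{k+1}\Li_{k+1}(t)\,\ul\omega^k(1)$, obtains $\nabla\wt\alpha=\xi$ by a one-line telescoping, puts $\alpha=-i\,\Im(\wt\alpha)$ so that $\nabla\alpha=\eta-\xi$ is automatic, and only then changes basis to recognize the coefficients as $D_{m+1}(t)$.)

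Second, and more seriously, you never justify that $\alpha$ lies in $W_0R^0(U,\bbLog(1))$. By Definition \ref{def: complex} this space is $W_0\Gamma\bigl(\ol X,\sLog(D)(1)\otimes\sE_{\ol X,\bbR}(D\cup Z)\bigr)$ --- global sections over the compactification $\ol X=\bbP^1$, not over $U$. Proposition \ref{prop: Ramakrishnan} only gives that the $D_m$ are single-valued real analytic on $U$; one must still show that $\sum_m D_{m+1}(t)\,\ul e^m(1)$ has the prescribed logarithmic behaviour at $0$, $1$, $\infty$ so as to define a section of the Burgos complex on all of $\ol X$. The paper proves this indirectly: since $[\xi]=[\eta]$, Proposition \ref{prop: Klingon} yields some $\alpha'\in W_0R^0(U,\bbLog(1))$ with $\nabla\alpha'=\eta-\xi$; then $\alpha-\alpha'|_U$ is horizontal, hence vanishes because $H^0(U,\bbLog(1))=0$ by Proposition \ref{prop: Log on U}, so $\alpha=\alpha'|_U$ extends. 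Without this step (or a direct growth estimate at the three punctures) the triple $(\alpha,\eta,\xi)$ is not known to be a cochain in $R^\bullet_\sA(U,\bbLog(1))$ at all, so the argument has a genuine gap here.
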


\begin{proof}
	Let us consider 
	\[
		\wt\alpha\coloneqq\sum_{k=0}^\infty (-1)^{k+1} \Li_{k+1}(t)\ul\omega^{k}(1)
	\]
	which is a multi-valued section of $\cLog(1)$ on $U$.
	Then we have
	\begin{align*}
		\nabla(\wt\alpha)& = \sum_{k=0}^\infty (-1)^{k+1} \ul\omega^{k}(1) \otimes d\Li_{k+1}(t)+  \sum_{k=0}^\infty (-1)^{k+1} \Li_{k+1}(t)\ul\omega^{k+1}(1) \otimes \frac{dt}{t}\\
		&= \sum_{k=0}^\infty (-1)^{k+1}  \Li_{k}(t) \ul\omega^{k}(1) \otimes \frac{dt}{t}+  \sum_{k=1}^\infty (-1)^{k} \Li_{k}(t)\ul\omega^{k}(1) \otimes \frac{dt}{t}
	= -\Li_0(t)\ul\omega^0(1)\otimes\frac{dt}{t} = \xi.
	\end{align*}
	The section $\wt\alpha$ only gives a section of
	\[	
		\sLog(1)\otimes\sE_{\ol X}(D)\coloneqq\varprojlim_N\left(\sLog^N(1)\otimes\sE_{\ol X}(D)\right)
	\]
	on $\bbC\setminus[1,\infty)$
	and does not extend to a global section of $\ol X$, since the functions $\Li_k(t)$ extend to multi-valued functions
	and do not extend to single-valued functions on $\bbC\setminus\{1\}$.  Note that we have
	$
		\nabla(\Re(\wt\alpha)) = \Re(\xi) = \eta,
	$
	hence if we let
	\[
		\alpha\coloneqq\Re(\wt\alpha) - \wt\alpha = -i\,\Im(\wt\alpha),
	\]
	then we have 
	\begin{equation}\label{eq: alpha differential}
		\nabla(\alpha) = \eta - \xi =  -i\,\Im(\xi).
	\end{equation}
	By \eqref{eq: base change two}, we have
	\begin{align*}
		\wt\alpha& = \sum_{k=0}^\infty(-1)^{k+1}\Li_{k+1}(t) (-i)^k \exp(\log|t|\omega)\cdot\ul e^k(1)
		= \sum_{k=0}^\infty\sum_{n=0}^\infty  (-1)^{k+1} (-i)^{n+k}\frac{(\log|t|)^n}{n!} \Li_{k+1}(t)  \ul e^{n+k}(1)\\
	&= - \sum_{m=0}^\infty  i^m \left(\sum_{k=0}^m   \frac{(-\log|t|)^{m-k}}{(m-k)!}\Li_{k+1}(t)\right) \ul e^{m}(1)
	=\sum_{m=0}^\infty i^{m+1}L_{m+1}(t)  \cdot i\ul e^m(1).
	\end{align*}
	Since the basis $\ul e^m(1)$ is purely imaginary,
	we see that $\alpha$ is given by
	\begin{equation*}
		\alpha= -i\,\Im(\wt\alpha) 
		= -i \sum_{m=0}^\infty \Im(i^{m+1}L_{m+1}(t)) \cdot i\ul e^m(1)
		= \sum_{m=0}^\infty D_{m+1}(t) \ul e^m(1),
	\end{equation*}
	which by Proposition \ref{prop: Ramakrishnan} is an element in $W_0\Gamma(U, \sLog(1) \otimes \sE_{U,\bbR})$.  
	
	It remains to show that $\alpha$ defines an element of $W_0 R^0(U,\bbLog(1))=W_0
	\Gamma\bigl(\ol X,\sLog(D)(1)\otimes\sE_{\ol X,\bbR}(D\cup Z)\bigr)$,
	From the fact that the class $[\xi]$ is real, we have $[\xi]=[\eta]$.
	Hence by Proposition \ref{prop: Klingon}, there exists $\alpha'\in W_0 R^0(U,\bbLog(1))$ such that $\nabla(\alpha')=\eta-\xi$.
	Since
	\[	
		\nabla(\alpha- \alpha'|_U) = \nabla(\alpha) - \nabla(\alpha'|_U)=0,
	\]
	the section $w:= \alpha-\alpha'|_U 
	\in \Gamma(U, \sLog(1)\otimes\sE_{U,\bbR})$
	defines a class in $\bbH^0(U,  \sLog(1) \otimes \sE^\bullet_{U,\bbR}) = H^0(U, \bbLog(1))$.  
	By Proposition \ref{prop: Log on U}, we have $H^0(U,\bbLog(1))=0$,
	hence this shows that $w=0$, which implies that $\alpha'|_U = \alpha$.
	This implies that $\alpha$ extends to an element of $W_0 R^0(U,\bbLog(1))$
	as desired.
\end{proof}

Using this result, we may recover the well-known result concerning the specialization of the polylogarithm class to torsion points of $\bbG_{\mathrm{m}}$.
Let $d>1$ be an integer and let $\zeta$ be a primitive $d$-th root of unity.
Let $\bbR(n) = (V_\bbR, W_\bullet, F^\bullet)$ with $V_\bbR:= \bbR \ul u^n$, $W_0 V_\bbR = V_\bbR$ if $n \geq 0$ and $F^0 V_\bbC = 0$ if $n>0$
be the Tate object of Example \ref{example: Tate}.
The inclusion $i_\zeta: \Spec \bbC \rightarrow U$
defines the restriction map 
\[
	i^*_\zeta: H^1_\sA(U, \bbLog(1)) \rightarrow H^1_\sA(\Spec \bbC, i^*_\zeta \bbLog(1)) \cong \prod_{k=0}^\infty H^1_\sA(\Spec \bbC, \bbR(k+1)),
\]
where the last equality follows from the splitting principle \eqref{eq: splitting}.
For any mixed $\bbR$-Hodge structure $V = (V_\bbR, W_\bullet, F^\bullet)$, we have by definition
\[
	H^m_\sA(\Spec \bbC, V):= \Ext^m_{\MHS_\bbR}(\bbR(0), V) = H^m \left( \Cone\left(W_0 V_\bbR \oplus (F^0 \cap W_0) V_\bbC  \rightarrow W_0V_\bbC \right)[-1]\right).
\]
For $\bbR(n) = (V_\bbR, W_\bullet, F^\bullet)$, if $n>0$ then we have
\[
	H^1_\sA(\Spec \bbC, \bbR(n)) \cong W_0 V_\bbC / W_0 V_\bbR = \bbC \ul\omega^n/\bbR \ul u^n \cong \bbC/(2\pi i)^n \bbR.
\]
Our calculation gives the following:

\begin{corollary}\label{corollary: main}
	Let $d>1$ be an integer and let $\zeta$ be a primitive $d$-th root of unity.  Then we have
	\[
		i^*_\zeta \pol  = ( (-1)^k\Li_{k+1}(\zeta))_{k=0}^\infty \in \prod_{k=0}^\infty \bbC/(2\pi i)^{k+1} \bbR.
	\]
\end{corollary}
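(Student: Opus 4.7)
The plan is to unwind the explicit cocycle given in Theorem \ref{thm: g=1} under the restriction $i^*_\zeta$. By that theorem, the polylogarithm class $\pol$ is represented by the triple $(\alpha,\eta,\xi)$ with $\alpha = \sum_{m\geq 0} D_{m+1}(t)\,\ul e^m(1)$ and $\xi,\eta\in R^1$. Since $\Spec\bbC$ has no nonzero $1$-forms, restricting to $\zeta$ kills both $\eta$ and $\xi$, and the cocycle condition for the absolute cone complex becomes trivial: the image of $\pol$ in $H^1_\sA(\Spec\bbC, i^*_\zeta\bbLog(1))$ is simply represented by $i^*_\zeta \alpha$, viewed modulo the subspace of real sections.

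Next I would use the splitting principle \eqref{eq: splitting} to decompose $i^*_\zeta\bbLog(1)\cong\prod_{k\geq0}\bbR(k+1)$, so that it suffices to compute each component separately. To evaluate $i^*_\zeta\ul e^m(1)$ in terms of the Tate basis $\ul\omega^{m+1}$ of $\bbR(m+1)$, I will apply the base change formula \eqref{eq: e}: since $|\zeta|=1$, the exponential factor becomes trivial and we simply obtain $\ul e^m(1)|_\zeta = i^m\,\ul\omega^m(1)$. Plugging in, and using that the defining series \eqref{eq: L} collapses at $|\zeta|=1$ to $L_{k+1}(\zeta)=\Li_{k+1}(\zeta)$, the $k$-th component of $i^*_\zeta\alpha$ is $i^k\Im\bigl(i^{k+1}\Li_{k+1}(\zeta)\bigr)\in\bbC$, viewed in $\bbC/(2\pi i)^{k+1}\bbR$.

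The final step is to verify the congruence
\begin{equation*}
i^k\Im\bigl(i^{k+1}\Li_{k+1}(\zeta)\bigr)\equiv (-1)^k\Li_{k+1}(\zeta)\pmod{(2\pi i)^{k+1}\bbR}.
\end{equation*}
A direct manipulation of $\Im(z)=(z-\bar z)/(2i)$ gives
\begin{equation*}
i^k\Im\bigl(i^{k+1}\Li_{k+1}(\zeta)\bigr) = \tfrac{1}{2}\bigl((-1)^k\Li_{k+1}(\zeta)+\ol{\Li_{k+1}(\zeta)}\bigr),
\end{equation*}
so the difference with $(-1)^k\Li_{k+1}(\zeta)$ equals $\tfrac{1}{2}\bigl((-1)^k\Li_{k+1}(\zeta)-\ol{\Li_{k+1}(\zeta)}\bigr)$, which is purely imaginary when $k$ is even and purely real when $k$ is odd. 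Since $(2\pi i)^{k+1}\bbR$ is the imaginary axis for $k$ even and the real axis for $k$ odd, the difference always lies in this line, proving the congruence.

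The main conceptual point, rather than an obstacle, is the vanishing of the $1$-form components $\eta,\xi$ upon pullback to a point, which reduces the computation to a single scalar. The only mildly delicate piece is keeping track of the conjugate basis conventions in \eqref{eq: e} and the Tate identification, together with a careful parity case analysis for the final congruence; both are purely bookkeeping once the framework of Theorem \ref{thm: g=1} is in place.
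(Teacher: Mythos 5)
Your proposal is correct and follows essentially the same route as the paper: restrict the cocycle $(\alpha,\eta,\xi)$ of Theorem \ref{thm: g=1} to the point (where the $1$-form components vanish), use $\log|\zeta|=0$ to reduce $L_{k+1}$ to $\Li_{k+1}$, and convert the $\ul e^m(1)$-coordinates to the Tate basis $\ul\omega^{m+1}$ modulo $\bbR\ul u^{m+1}=(2\pi i)^{m+1}\bbR\ul\omega^{m+1}$. Your identity $i^k\Im\bigl(i^{k+1}z\bigr)=\tfrac12\bigl((-1)^kz+\ol z\bigr)$ with the parity check is just a slightly different bookkeeping of the paper's step $\Im(i^{m+1}w)=\Re(i^m w)\equiv i^m w \pmod{i\bbR}$.
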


\begin{proof}
	By Theorem \ref{thm: g=1}, the class $i^*_\zeta \pol$ is represented by the triple 
	\begin{multline*}
		(i^*_\zeta \alpha, i^*_\zeta \eta, i^*_\zeta\xi) \in W_0 R^0(\Spec\bbC, i^*_\zeta\bbLog_\bbC(1)) \oplus
		W_0 R^1(\Spec\bbC, i^*_\zeta\bbLog_\bbR(1))\\ \oplus (F^0\cap W_0) R^1(\Spec\bbC, i^*_\zeta\bbLog_\bbC(1)).
	\end{multline*}
	However, since the dimension of $\Spec\bbC$ is zero, we have 
	\[
		R^1(\Spec\bbC, i^*_\zeta\bbLog_\bbR(1))=R^1(\Spec\bbC, i^*_\zeta\bbLog_\bbC(1))=0.
	\]
	Hence $i^*_\zeta \pol$ is represented by the section
	\[
		i^*_\zeta \alpha = \sum_{m=0}^\infty D_{m+1}(\zeta) \ul e^m(1) 
		\in W_0 R^0(\Spec\bbC, i^*_\zeta\bbLog_\bbC(1)) := \Gamma(\Spec\bbC, i^*_\zeta\bbLog_\bbC(1))
		= \prod_{m=0}^\infty \bbC \ul e^m(1).
	\]
	Note that since $\log|\zeta|=0$, we have $L_{m+1}(\zeta) = \Li_{m+1}(\zeta)$ for any $m\geq0$.
	By definition,
	\begin{align*}
		D_{m+1}(\zeta) \ul e^m(1)&= \Im(i^{m+1} L_{m+1}(\zeta)) \ul e^m(1)= \Im(i^{m+1} \Li_{m+1}(\zeta)) \ul e^m(1)\\
		&= \Re(i^{m} \Li_{m+1}(\zeta)) \ul e^m(1).
	\end{align*}
	hence we have
	\[
		D_{m+1}(\zeta) \ul e^m(1) \equiv i^{m} \Li_{m+1}(\zeta) \ul e^m(1)  \pmod{\bbR i\ul e^m(1)}.
	\]
	Noting that $i^{m} \ul e^m(1) = (-1)^m\ul\omega^{m+1}$ and $\bbR i\ul e^{m}(1) = \bbR\ul u^{m+1}$, we see that $i^*_\zeta \alpha$ coincides with the class of 
	\[
		\sum_{m=0}^\infty (-1)^m\Li_{m+1}(\zeta) \ul\omega^{m+1}
	\]
	in $\prod_{m=0}^\infty \bbC\ul\omega^{m+1}/\bbR\ul u^{m+1}$ as desired.
\end{proof}

%
%
\subsection{The Polylogarithm Class for the Case $\boldsymbol{g>1}$.}
%
%

We consider the case when $X= \bbG_{\mathrm{m}}^g$ for an integer $g>1$.
For $\mu\in[g]$, we let $\alpha_\mu$ and $\xi_\mu$ be the elements given by
\begin{align*}
	\alpha_\mu &:= \sum_{m=0}^\infty D_{m+1}(t_\mu) \ul e^{m_\mu}(1),   &   \xi_\mu &:= \ul\omega^{\boldsymbol{0}}(1) \otimes \frac{dt_\mu}{t_\mu-1},
\end{align*}	
where $m_\mu$ is the element in $\bbN^g$ with $m$ in the $\mu$-th component and $0$ in the other components.
Then the differential form $\xi$ of \eqref{eq: differential form} is given by
\[
 	\xi= \ \xi_1 \wedge \cdots \wedge\xi_g  \in (F^0\cap W_0) R^{2g-1}(U, \bbLog_\bbC(g)).
\]
We let
\begin{equation*}
	\eta\coloneqq\Re(\xi ) = (\xi + \ol \xi)/2 \in W_0 R^{2g-1}(U, \bbLog(g)).
\end{equation*}
Then we have $\eta - \xi = (\ol \xi - \xi)/2$.
If we let
\begin{equation*}
	\alpha\coloneqq\sum_{\mu=1}^g (-1)^{\mu-1} \alpha_\mu \xi_1 \wedge \cdots \wedge \xi_{\mu-1} \wedge \ol{\xi_{\mu+1}} \wedge \cdots \wedge \ol{\xi_g}
	\in W_0 R^{2g-2}(U, \bbLog_\bbC(g)),
\end{equation*}
then we have
\begin{align*}
	\nabla(\alpha)&= \sum_{\mu=1}^g (-1)^{\mu-1}  \frac{\ol \xi_\mu - \xi_\mu}{2} \wedge \xi_1 \wedge \cdots \wedge \xi_{\mu-1} \wedge \ol{\xi_{\mu+1}} \wedge \cdots \wedge \ol{\xi_g}\\
	&= \frac{1}{2} \sum_{\mu=1}^g  \left( \xi_1 \wedge \cdots \wedge \xi_{\mu-1}\wedge \ol{\xi_\mu} \wedge \ol{\xi_{\mu+1}} \wedge \cdots \wedge \ol{\xi_g}
	-  \xi_1 \wedge \cdots \wedge \xi_{\mu-1}\wedge \xi_\mu \wedge \ol{\xi_{\mu+1}} \wedge \cdots \wedge \ol{\xi_g} \right)\\
	&= \frac{1}{2} \left(\ol{\xi_1} \wedge \cdots \wedge\ol{\xi_g}  -   \xi_1\wedge\cdots\wedge\xi_g \right)  = \frac{1}{2} (\ol{\xi} - \xi) = \eta- \xi,
\end{align*}
where the calculation of $\nabla(\alpha_\mu)$ follows from \eqref{eq: alpha differential}.

This proves our main result.

\begin{theorem}[Case $g>1$]\label{main theorem}
	We let $\alpha$, $\eta$ and $\xi$ be as above.  Then the triple
	\[
		(\alpha, \eta, \xi) \in W_0 R^{2g-2}(U, \bbLog_\bbC(g)) \oplus W_0 R^{2g-1}(U, \bbLog(g)) \oplus  (F^0 \cap W_0)R^{2g-1}(U, \bbLog_\bbC(g))
	\]
	represents the polylogarithm class $\pol$ in $H^{2g-1}_\sA(U,\bbLog(g))$.
\end{theorem}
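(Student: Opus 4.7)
The plan is to check that $(\alpha, \eta, \xi)$ defines a cocycle in the cone complex $R^\bullet_\sA(U, \bbLog(g))$ and that its class matches the polylogarithm. Concretely, this means verifying: (a) the filtration memberships claimed in the statement; (b) the cocycle conditions $d\xi = 0$, $d\eta = 0$, and $\nabla(\alpha) = \eta - \xi$; (c) that the explicit formula for $\alpha$ extends from $U$ to a global section of the logarithmic \v Cech-Dolbeault complex on $\ol X$; and (d) that the resulting class maps to the geometric polylogarithm $\bsxi$ under the surjection in \eqref{eq: SES second}.

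For (a), the key observation is that each $\xi_\mu$ is a logarithmic holomorphic $(1,0)$-form tensored with $\ul\omega^{\bszero}(1)$ lying in $F^0 \cap W_{-2}\sLog(D)(1)$, so the wedge product $\xi = \xi_1 \wedge \cdots \wedge \xi_g$ lies in $(F^0 \cap W_0) R^{2g-1}(U, \bbLog_\bbC(g))$, exactly as shown for $\xi_N$ at the end of \S\ref{subsection: MHS log}. Taking real parts gives $\eta \in W_0 R^{2g-1}(U, \bbLog(g))$; the membership of $\alpha$ in $W_0 R^{2g-2}(U, \bbLog_\bbC(g))$ is precisely the issue handled in (c). For (b), each $\xi_\mu$ is closed in the \v Cech-de Rham sense and has vanishing $\nabla$-image (since the connection on $\ul\omega^{\bszero}$ produces only higher-weight terms that are wiped out in the wedge with the top-degree logarithmic form), so $d\xi = 0$ and hence $d\eta = 0$. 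The identity $\nabla(\alpha) = \eta - \xi$ is precisely the computation given just before the statement, which reduces via the single-factor formula $\nabla(\alpha_\mu) = (\ol{\xi_\mu} - \xi_\mu)/2$ of Theorem~\ref{thm: g=1}.

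The main obstacle is (c). The Bloch--Wigner--Ramakrishnan function $D_{m+1}(t_\mu)$ is real analytic on $\bbP^1 \setminus \{0, 1, \infty\}$ by Proposition~\ref{prop: Ramakrishnan}, so the formula for $\alpha_\mu$ naturally gives a section of $\sLog(1) \otimes \sE_{\ol X,\bbR}$ away from $Z_\mu$, but one must still confirm that the product combination extends globally into the weight-zero part of the Čech-Dolbeault complex on $\ol X$. I would mirror the end of the proof of Theorem~\ref{thm: g=1}. Since the class $[\xi]$ is real (its residue $\ul u^{\bszero}$ being real, cf.\ Remark~\ref{rem: geometric}), we have $[\eta] = [\Re \xi] = [\xi]$ in $W_0 H^{2g-1}(U, \bbLog_\bbC(g))$, so $[\eta - \xi] = 0$ there. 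By the strict compatibility of the filtrations with differentials (the $E_1$-degeneration recorded after Definition~\ref{def: complex}) together with Proposition~\ref{prop: Klingon}, there exists $\alpha' \in W_0 R^{2g-2}(U, \bbLog_\bbC(g))$ with $\nabla(\alpha') = \eta - \xi$. The difference $\alpha - \alpha'|_U$ is then $\nabla$-horizontal, hence defines a class in $H^0(U, \bbLog_\bbC(g))$; but this group vanishes for $g > 1$ by Proposition~\ref{prop: Log on U}, so $\alpha = \alpha'|_U$ extends as required.

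Finally for (d), once $(\alpha, \eta, \xi)$ is a cocycle it represents a class in $H^{2g-1}_\sA(U, \bbLog(g))$ whose image in $\Hom_{\MHS_\bbR}(\bbR(0), H^{2g-1}(U, \bbLog(g)))$ under the surjection in \eqref{eq: SES second} is represented by $[\xi] = \bsxi$, by the construction of the geometric polylogarithm in the previous section. By Definition~\ref{def: polylogarithm}, this class is $\pol$.
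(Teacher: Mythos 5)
Your overall strategy coincides with the paper's: the same explicit cocycle $(\alpha,\eta,\xi)$, the same reduction of $\nabla(\alpha)=\eta-\xi$ to the one--variable identity $\nabla(\alpha_\mu)=(\ol{\xi_\mu}-\xi_\mu)/2$ from \eqref{eq: alpha differential}, and the same identification of the image of the class under \eqref{eq: SES second} with the geometric polylogarithm. Steps (a), (b) and (d) are fine.

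Step (c), however, contains a genuine error. You transplant the extension argument from the end of the proof of Theorem \ref{thm: g=1}, but that argument works only because in the case $g=1$ the element $\alpha$ sits in cohomological degree $0$: a closed degree-$0$ cochain whose class vanishes is itself zero. For $g>1$ your $\alpha$ sits in degree $2g-2>0$, so from $\nabla(\alpha-\alpha'|_U)=0$ and the vanishing of the relevant cohomology group you may only conclude that $\alpha-\alpha'|_U$ is \emph{exact} on $U$, not that it is zero; hence you cannot deduce $\alpha=\alpha'|_U$, and the desired extension of the specific representative $\alpha$ does not follow. (Two further slips compound this: the closed element $\alpha-\alpha'|_U$ defines a class in $H^{2g-2}(U,\bbLog_\bbC(g))$, not in $H^{0}(U,\bbLog_\bbC(g))$ as you write; and for $g=2$ that group is $H^{g}(U,\bbLog(g))\cong\bbR(0)\neq 0$ by Proposition \ref{prop: Log on U}, so even the vanishing you invoke fails.) The correct justification of the membership $\alpha\in W_0R^{2g-2}(U,\bbLog_\bbC(g))$ is not cohomological but factor-wise: Theorem \ref{thm: g=1}, applied on the $\mu$-th $\bbP^1$ factor and pulled back to $\ol X$, already shows that each $\alpha_\mu$ extends to a global section of the weight-$\leq 0$ part of $\sLog(D)(1)\otimes\sE_{\ol X,\bbR}(D\cup Z_\mu)$, while each $\xi_\nu$ and $\ol{\xi_\nu}$ is visibly a global logarithmic form of the correct weight; the sums of products defining $\alpha$ therefore lie in $W_0R^{2g-2}(U,\bbLog_\bbC(g))$ by the multiplicativity of the weight filtration, with no appeal to vanishing of cohomology needed.
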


\begin{bibdiv}
	\begin{biblist}
		\bibselect{PolylogarithmBibliography}
	\end{biblist}
\end{bibdiv}

\end{document}